\tikzset{nodc/.style={circle,draw=blue!50,fill=pink!80,inner sep=4.2pt}}
\tikzset{nod1/.style={circle,draw=black,fill=black,inner sep=1pt}}
\tikzset{noddee/.style={circle,draw=black,fill=black,inner sep=1.6pt}}
\tikzset{nod3/.style={circle,draw=black,fill=black,inner sep=2.5pt}}
\tikzset{nodempty/.style={circle,draw=black,inner sep=2pt}}
\tikzset{nodel/.style={circle,draw=black,inner sep=2.2pt}}
\tikzset{nodinvisible/.style={circle,draw=white,inner sep=2pt}}
\tikzset{nodpale/.style={circle,draw=gray,fill=gray,inner sep=1.6pt}}
\DeclarePairedDelimiter\ceil{\lceil}{\rceil}
\theoremstyle{plain}
\newtheorem{proposition}[equation]{Proposition}
\newtheorem{theorem}[equation]{Theorem}
\newtheorem{observation}[equation]{Observation}
\newtheorem{corollary}[equation]{Corollary}
\newtheorem{lemma}[equation]{Lemma}
\theoremstyle{definition}
\newtheorem{problem}[equation]{Problem}
\newtheorem{definition}[equation]{Definition}
\newtheorem{remark}[equation]{Remark}
\newtheorem{example}[equation]{Example}
\newcommand{\PE}{\mathcal{P}}
\newcommand{\BE}{\mathfrak{B}}
\newcommand{\CE}{\mathcal{C}}
\newcommand{\FE}{\mathcal{F}}
\newcommand{\SE}{\mathcal{S}}
\newcommand{\UE}{\mathcal{U}}
\newcommand{\DE}{\mathcal{D}}
\newcommand{\rom}{\textnormal{R}}
\newcommand{\dist}{\operatorname{dist}}
\newcommand{\bp}{\operatorname{bp}}
\newcommand{\st}{\operatorname{st}}
\newcommand{\Max}{\operatorname{Max}}
\newcommand{\Min}{\operatorname{Min}}
\newcommand{\Mid}{\operatorname{Mid}}
\newcommand{\md}{\operatorname{mid}}
\newcommand{\comp}{\operatorname{Comp}}
\newcommand{\core}{\operatorname{Core}}
\newcommand{\red}{\operatorname{Red}_3}
\newcommand{\os}{\operatorname{os}}
\newcommand{\vdt}{\operatorname{vdt}}
\newcommand{\midd}{\operatorname{MD}}
\newcommand{\we}{\operatorname{we}}
\newcommand{\C}{$\mathcal{C}$ }
\newcommand{\V}{$\varphi$ }
\begin{document}
\title[Order-sensitive domination in posets]{Order-sensitive domination in partially ordered sets}

\author{Yusuf Civan, Zakir Deniz and Mehmet Akif Yetim}
\address{Department of Mathematics, Suleyman Demirel University,
Isparta, 32260, Turkey.}
\email{yusufcivan@sdu.edu.tr}

\address{Department of Mathematics, Duzce University, Duzce, 81620, Turkey.}
\email{zakirdeniz@duzce.edu.tr}

\address{Department of Mathematics, Suleyman Demirel University,
Isparta, 32260, Turkey.}
\email{akifyetim@sdu.edu.tr}
\keywords{Domination, partially ordered set, order-sensitive, comparability, Roman domination, biclique partition, weighted clique partition, computational complexity.}

\date{\today}

\thanks{}

\subjclass[2010]{05C69, 06A07, 68Q17}

\begin{abstract}
For a (finite) partially ordered set (poset) $P$, we call a dominating set $D$ in the comparability graph of $P$, an \emph{order-sensitive dominating set} in $P$ if either $x\in D$ or else $a<x<b$ in $P$ for some $a,b\in D$ for every element $x$ in $P$ which is neither maximal nor minimal, and denote by $\gamma_{\os}(P)$, the least size of an order-sensitive dominating set of $P$. For every graph $G$ and integer $k\geq 2$, we associate to $G$ a graded poset $\mathscr{P}_k(G)$ of height $k$, and prove that
$\gamma_{\os}(\mathscr{P}_3(G))=\gamma_{\rom}(G)$ and $\gamma_{\os}(\mathscr{P}_4(G))=2\gamma(G)$ hold, where $\gamma(G)$ and $\gamma_{\rom}(G)$ are the domination  and Roman domination number of $G$ respectively. Apart from these, we introduce the notion of a \emph{Helly poset}, and prove that when $P$ is a Helly poset, the computation of order-sensitive domination number of $P$ 
can be interpreted as a weighted clique partition number of a graph, the \emph{middle graph} of $P$. Moreover, we show that the order-sensitive domination number of a poset $P$ exactly corresponds to the biclique vertex-partition number of the associated bipartite transformation of $P$. Finally, we prove that the decision problem of order-sensitive domination on posets of arbitrary height is NP-complete, which is obtained by using a reduction from EQUAL-$3$-SAT problem. 
\end{abstract} 
\maketitle
%\newpage
%\tableofcontents

%%%%%%%%%%%%%%%%%%%%%%%%%%%%%%%%%%%%%%%%%%%%%%%%%%%%%%%%%%%%
\section{Introduction}\label{sect:intro}
Domination theory is one of the well-established main streams in graph theory with various applications to real-world problems. Most domination parameters come up with mainly by imposing conditions on the sets that dominate the graph, regardless of where the graph itself derived from. As there is a growing amount of research interconnecting various fields of mathematics (algebra, topology, ect.) to graph theory, one may naturally consider an invariant of a graph to respect the substructure where the graph is constructed. 

The main purpose of our present work is to introduce and study a new variation of a domination parameter that fulfills such an expectation over the bridge between partially ordered sets (posets) and graph theory. The class of comparability graphs provides one of the most natural way to associate a graph to a given poset $P$. Recall that the comparability graph $\comp(P)$ of a poset $P$ is defined over the same ground set with $P$ having edges corresponding to comparabilities in $P$. In the graph theoretical side, the characterization of comparability graphs carried over the existence of a transitive orientation on the vertex sets. Since non-isomorphic posets may have identical comparability graphs, a graph parameter defined over comparability graphs may not recognize the role of a fixed underlying transitive orientation. At this point, we insist that a dominating set in $\comp(P)$ should further respects the structure of $P$. In more detail, we denote by $\Mid(P)$, the set of elements of $P$ which are neither maximal nor minimal in $P$. We call a dominating set $D$ in $\comp(P)$, an \emph{order-sensitive dominating set} of $P$, if $x\in D$ or there exist $a,b\in D$ such that $a<x<b$ in $P$ for every $x\in \Mid(P)$, and the \emph{order-sensitive domination number} of $P$, denote by $\gamma_{\os}(P)$, is defined to be the least size of an order-sensitive dominating set in $P$. Obviously, we have the convention that $\gamma_{\os}(P)=\gamma(\comp(P))$ whenever $\Mid(P)=\emptyset$, where $\gamma(G)$ denotes the domination number of a graph $G$. Therefore, the order-sensitive domination number is perceptible when
$\Mid(P)\neq \emptyset$. In order to distinguish this borderline, we mainly consider the family $\PE_3(k)$ of posets of height $k$ for $k\geq 3$ in which every element is contained in a chain of size at least $3$.

We prove that the order-sensitive domination number of a poset $P$ is equal to the (ordinary) domination number of the comparability graph of a poset $P^*$ constructed from $P$. Furthermore, we show that for every poset $P\in \PE_3(k)$ for some $k\geq 4$, there exists a poset $\red(P)\in \PE_3(3)$, called the \emph{height three reduction} of $P$, such that $\gamma_{\os}(P)=\gamma_{\os}(\red(P))$. 

At a first glance, such a generalization may seem to be a transfer of a notion from graph theory to the theory of posets. However, we verify that this particular new parameter has a role to play in the field of domination theory of graphs as well. For a given graph $G$, we associate to it a graded poset $\mathscr{P}_k(G)$ of height $k$ for each $k\geq 2$, and prove that $\gamma_{\os}(\mathscr{P}_3(G))=\gamma_{\rom}(G)$ and $\gamma_{\os}(\mathscr{P}_4(G))= 2\gamma(G)$, where $\gamma_{\rom}(G)$ denotes the Roman domination number of $G$. Moreover, we introduce the notion of a \emph{Helly poset}, and show that if $P\in \PE_3(k)$ is a Helly poset, then $\gamma_{\os}(P)$ can be calculated from the weighted clique partition number of a graph, the \emph{middle graph}, associated to $P$ (see Section~\ref{sec: middle} for more details). By way of application, we prove that
$\gamma_{\rom}(G)\leq 2\chi(\overline{G^2})$ holds for every $(3\text{-sun},C_4,C_5,C_6)$-free graph $G$, where $G^2$ is the square graph of $G$. 

Our next move is to establish a connection between the order-sensitive domination number of a poset $P$ and the biclique vertex-partition number of a bipartite graph $\mathcal{B}(P)$ constructed from $P$, that can be of independent interests. Recall that a biclique in a graph $G$ is a complete bipartite subgraph (not necessarily induced) of $G$. The biclique vertex-partition number $\bp(G)$ of $G$ is the least integer $d$ for which the vertex set of $G$ can be partition into $d$ bicliques of $G$. On this direction, we show the equality $\gamma_{\os}(P)=\bp(\mathcal{B}(P))$ whenever $P\in \PE_3(k)$ for some $k\geq 3$.

Finally, we determine the complexity of order-sensitive domination. In detail, we prove that for a given poset $P\in \PE_3(k)$ with $k\geq 3$ and a positive integer $d$, the problem of deciding whether there exists an order-sensitive dominating set in $P$ of size at most $d$ is NP-complete. When $k\geq 4$, the claimed result is obtained by a reduction  from EQUAL-$3$-SAT problem.

%%%%%%%%%%%%%%%%%%%%%%%%%%%%%%%%%%%%%%%%%%%
\section{Preliminaries}\label{sect:prelim}
\subsection{Graphs}
All the graphs considered in this paper are finite, simple and connected. If $G$ is a graph, $V(G)$ and $E(G)$ denote its vertex set and edge set, respectively. If $S\subset V=V(G)$, the graph induced by $S$ is written $G[S]$. We abbreviate $G[V\backslash S]$ to $G-S$. For a given vertex $x\in V(G)$, the set $N_G(x):=\{y\in V(G)\colon xy\in E(G)\}$ denote the (open) neighborhood of $x$ in $G$.

An \emph{independent set} in a graph is a set of pairwise non-adjacent vertices, while a \emph{clique} means a set of pairwise adjacent vertices. A graph is \emph{bipartite} if its vertex set can be partitioned into two independent sets. The complement $\overline{G}$ of a graph $G$ is the graph on the same vertex set $V(G)$ such that two vertices are adjacent in $\overline{G}$ if and only if they are not adjacent in $G$. The \emph{distance} between two vertices $u$ and $v$ in $G$, denoted by $dist_G(u,v)$, is the length of the shortest path connecting $u$ and $v$ in $G$. The square $G^2$ of a graph $G$ is a graph on $V(G)$ such that two vertices $u$ and $v$ are adjacent in $G^2$ if and only if $dist_G(u,v)\leq 2$. A set $S\subseteq V(G)$ is said to be a \emph{$2$-packing} in $G$ if $dist_G(u,v)>2$ for every $u,v\in S$.

Throughout the paper, $K_n$, $P_n$ and $C_k$ will denote the complete, path and cycle graphs on $n\geq 1$ and $k\geq 3$ vertices respectively. Moreover, we denote by $K_{p,q}$, the complete bipartite for any $p, q\geq 1$. A graph is said to be \emph{weakly chordal} if it does not contain any induced $C_k$ and $\overline{C_k}$ for $k\geq 5$.  A graph is called \emph{chordal bipartite} if it is both bipartite and weakly chordal.

In a graph $G$, a subset $S\subseteq V(G)$ is called a \emph{dominating set} of $G$, if any vertex which is not in $S$ is adjacent to a vertex in $S$. Furthermore, a set $D\subseteq V(G)$ is called a \emph{total dominating set} of $G$ if each vertex of $G$ is adjacent to a vertex in $D$. The minimum size of a dominating set (resp. total dominating set) of $G$, denoted by $\gamma(G)$ (resp. $\gamma_t(G)$), is called the \emph{domination number} (resp. \emph{total domination number}) of $G$. A \emph{Roman dominating function} of a graph $G$ is a function $f:V(G)\to \{0,1,2\}$ such that every vertex $v$ with $f(v)=0$ has a neighbor $u$ with $f(u)=2$ in $G$. The \emph{weight} of a Roman dominating function $f$ is the value $\we(f)=\sum_{v\in V(G)}f(v)$. The minimum weight of a Roman dominating function of a graph $G$ is called the \emph{Roman domination number} of $G$, denoted by $\gamma_{\rom}(G)$.

Throughout, we use the notation $[n]:=\{1,2,\ldots,n\}$ for an integer $n\geq 1$.

A $k$-\emph{coloring} of a graph $G$ is a mapping $\kappa\colon V(G)\to [k]$ such that $\kappa(x)\neq \kappa(y)$ for every edge $xy\in E(G)$. The \emph{chromatic number} $\chi(G)$ of $G$ is the least integer $k$ such that $G$ admits a $k$-coloring.

A \emph{biclique} in a graph is a (not necessarily induced) subgraph isomorphic to a complete bipartite graph. A set $\BE=\{B_1, B_2,\ldots,B_k\}$ of bicliques of a graph $G$ is a \emph{biclique vertex-cover} of $G$ with size $k$, if each vertex of $G$ belongs to at least one biclique in $\BE$. A biclique vertex-cover $\BE=\{B_1,B_2, \ldots,B_k\}$ is said to be a \emph{biclique vertex-partition} if bicliques in $G$ are pairwise disjoint, that is, each vertex of $G$ belongs to exacly one biclique in $\BE$. Biclique vertex-partition number of a graph $G$, denoted by $\bp(G)$, is defined to be the least integer $k$ such that $G$ admits a biclique vertex-partition of size $k$. 

\begin{remark} It is known from \citep{FMPS} that a graph has a biclique vertex-cover of size at most $k$ if and only if it has a biclique vertex-partition of size $k$. Therefore we do not distinguish between the biclique vertex-cover and biclique vertex-partition of a graph $G$ and use the notation $\bp(G)$, although we mostly appeal to biclique vertex-coverings of graphs throughout Section~\ref{sec:biclique}. 
\end{remark}

\subsection{Posets} A \emph{partially ordered set} (\emph{poset}, in short) is a pair $P=(X,\leq_P)$, where $X=X(P)$ is a set, and $\leq_P$ is a partial order, which is a reflexive, antisymmetric and transitive binary relation on $X$. We write $x\leq_P y$ (or $y\geq_P x$) when $(x,y) \in P$, and accordingly  $x<_P y$ when  $x\leq_P y$ and $x\neq y$. We further say that $y$ \emph{covers} $x$ and denote $x\prec_Py$ if $x<_P y$ and there is no element $z\in X$ such that $x<_P z <_P y$. If $x\leq_P y$ or $y\leq_P x$ for some $x,y\in X$, then $x$ and $y$ are said to be \emph{comparable} in $P$; otherwise they are \emph{incomparable} in $P$, denoted by $x\parallel_P y$. When there is no confusion, we drop the subscript $P$.

An element $x\in X$ is called a \emph{maximal element} (resp., \emph{minimal element}) of $P$ if there is no element $y\in X$  with $x<_P y$ (resp., $y<_P x$). We denote by $\Max(P)$ and $\Min(P)$ the set of maximal and minimal elements of $P$, respectively. Furthermore, we let $\Mid(P):=X\setminus (\Max(P)\cup \Min(P))$, the set of elements in $P$ which are neither maximal nor minimal. For a given subset $S\subseteq X$, the set $D_P(S):=\{x\in X\colon x<s\;\textnormal{for some}\;s\in S\}$ is the \emph{down-set} of $S$ in $P$, while $U_P(S):=\{x\in X \colon x>s\;\textnormal{for some}\;s\in S\}$ is the \emph{up-set} of $S$ in $P$. For a given subset $S\subseteq X$, we also define the \emph{common down-set} and the \emph{common up-set} of $S$ in $P$ by $CD_P(S):=\{x\in X\colon x<s\;\textnormal{for all}\;s\in S\}$ and $CU_P(S):=\{x\in X \colon x>s\;\textnormal{for all}\;s\in S\}$, respectively.

A subset $Y$ of $X$ is called a \emph{chain}, if any two elements of $Y$ is comparable in $P$. A chain is \emph{maximum} if there is no chain of larger size. The size of a maximum chain in $P$ is called the \emph{height of} $P$, denoted by $height(P)$.

For a given poset $P$, its comparability graph $\comp(P)$ is the graph on $X$ such that $xy\in E(\comp(P))$ if and only if $x\neq y$ and either $x<y$ or $y<x$ in $P$. Note that the comparability graph of a poset does not contain an induced subgraph isomorphic to an odd cycle of length greater than three or the complement of a cycle of length greater than four (see \cite{Trotter}). Therefore, a comparability graph is weakly chordal if it does not contain any induced $C_{2k}$ for $k\geq 3$. We call the poset $P$ a \emph{weakly chordal poset} if $\comp(P)$ is a weakly chordal graph. 

%%%%%%%%%%%%%%%%%%%%%%%%%%%%%%%%%%%%%%%%%%%%%%%%%%%%%%%%%%
\section{Order-sensitive domination in posets}\label{sect:os-dom-posets}
In this section, we introduce and study the notion of order-sensitive domination in posets. 
\begin{definition}
For a given poset $P$, a dominating set $D$ in $\comp(P)$ is called an \emph{order-sensitive dominating set} in $P$ if either $x\in D$ or else $x\in U_P(D)\cap D_P(D)$ for each $x\in \Mid(P)$. The least size of an order-sensitive dominating set in $P$ is said to be the \emph{order-sensitive domination number} of $P$, denoted by $\gamma_{\os}(P)$. For the brevity, we write \emph{os-dominating set} instead of order-sensitive dominating set throughout. 
\end{definition}

We have the convention that $\gamma_{\os}(P)=\gamma(\comp(P))$, when $\Mid(P)=\varnothing$. In another words, the order-sensitive domination number of $P$ is distinguishable from the domination number of the comparability graph $\comp(P)$ when $\comp(P)$ has a clique of size at least three, which is when $height(P)\geq 3$. Moreover, if $P$ is a poset in which every element is contained in a chain of size at least $3$, then it is obvious that $\gamma_{\os}(P)\leq \min\{|\Mid(P)|,|\Min(P)\cup \Max(P)|\}$.

\begin{remark}
The order-sensitive domination number is not a comparability invariant. Indeed, os-domination numbers of two non-isomorphic posets having isomorphic comparability graphs may differ. For the posets in Figure~\ref{fig:os-dom-notinv}, we have $\gamma_{\os}(P)=2$ and $\gamma_{\os}(R)=1$, while $\comp(P)\cong\comp(R)$.
\end{remark}

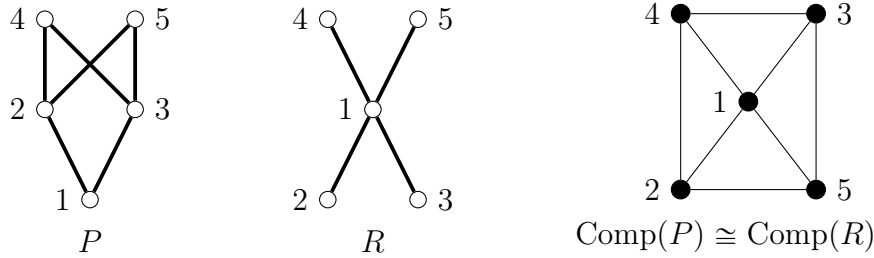
\begin{figure}[ht]
\centering     %%% not \center
\subfigure{
\begin{tikzpicture}[scale=.6]
\node [nodel] at (1,0) (v1) [label=left:$1$]{};
\node [nodel] at (0,2) (v2) [label=left:$2$]{}
               edge [line width=0.5mm] (v1);
\node [nodel] at (2,2) (v3) [label=right:$3$]{}
           edge [line width=0.5mm] (v1);            
\node [nodel] at (0,4) (v4) [label=left:$4$]{}
               edge [line width=0.5mm] (v2)
               edge [line width=0.5mm] (v3);
\node [nodel] at (2,4) (v5) [label=right:$5$]{}
           edge [line width=0.5mm] (v2)
           edge [line width=0.5mm] (v3);
\node at (1,-.2) (v) [label=below:$P$] {};
\end{tikzpicture}
}
\hspace*{8mm}
\subfigure{
\begin{tikzpicture}[scale=.6]
\node [nodel] at (1,2) (v1) [label=left:$1$]{};
\node [nodel] at (0,0) (v2) [label=left:$2$]{}
               edge [line width=0.5mm] (v1);
\node [nodel] at (2,0) (v3) [label=right:$3$]{}
               edge [line width=0.5mm] (v1);
\node [nodel] at (0,4) (v4) [label=left:$4$]{}
               edge [line width=0.5mm] (v1);
\node [nodel] at (2,4) (v5) [label=right:$5$]{}
          	   edge [line width=0.5mm] (v1);
\node at (1,-.2) (v) [label=below:$R$] {};
\end{tikzpicture}
}
\hspace*{8mm}
\subfigure{
\begin{tikzpicture}[scale=.6]
\node [nod3] at (1.5,1.95) (v1) [label=left:$1$]{};
\node [nod3] at (0,0) (v2) [label=left:$2$]{}
               edge [] (v1);
\node [nod3] at (3,0) (v5) [label=right:$5$]{}
               edge [] (v1)
               edge [] (v2);
\node [nod3] at (0,3.9) (v4) [label=left:$4$]{}
               edge [] (v1)
               edge [] (v2);
\node [nod3] at (3,3.9) (v3) [label=right:$3$]{}
          	   edge [] (v1)
               edge [] (v4)
               edge [] (v5);
\node at (1,-.2) (v) [label=below:$\comp(P)\cong \comp(R)$] {};
\end{tikzpicture}
}
\caption{Two non-isomorphic posets with the same comparability graph.}
\label{fig:os-dom-notinv}
\end{figure}

Denote by $\mathcal{P}_l(k)$, the family of posets of height $k$ in which every element is contained in a chain of size at least $l$. Since $l\leq k$ by the definition of $\mathcal{P}_l(k)$, we only consider the case when $l\geq 3$ in the remaining of the paper. In particular, we abbreviate $\PE_3(3)$ to $\PE(3)$.

\begin{proposition}\label{rem:upperb1}
$\gamma_{\os}(P)\leq \ceil*{\frac{n-1}{2}}$ for every $n$-element poset $P\in \mathcal{P}_3(k)$ with $k\geq3$.
\end{proposition}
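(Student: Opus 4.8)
The plan is to build an explicit os-dominating set of size at most $\lceil (n-1)/2 \rceil$ by a linear-extension/greedy argument. First I would fix a linear extension $v_1 < v_2 < \cdots < v_n$ of $P$ (so that $v_i <_P v_j$ implies $i < j$), and try to dominate the comparability graph while simultaneously respecting the order-sensitive condition on $\Mid(P)$. The natural attempt is to take $D = \{v_1, v_3, v_5, \ldots\}$ or $D = \{v_2, v_4, v_6, \ldots\}$, i.e. every other element in the linear extension, which has size roughly $n/2$; however a linear extension need not give comparabilities between consecutive elements, so this crude choice does not obviously dominate. A more robust approach: process $P$ by a peeling argument on maximal/minimal elements, or induct on $n$.

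A cleaner route is induction on $n$ combined with the structural observation, already recorded in the excerpt, that for $P \in \mathcal{P}_3(k)$ every element lies in a $3$-chain, so $\gamma_{\os}(P) \le \min\{|\Mid(P)|, |\Min(P) \cup \Max(P)|\}$. If $|\Min(P) \cup \Max(P)| \le \lceil (n-1)/2 \rceil$ we are immediately done, so we may assume $|\Min(P) \cup \Max(P)| \ge \lceil (n-1)/2 \rceil + 1$, hence $|\Mid(P)| \le n - \lceil (n-1)/2 \rceil - 1 = \lfloor (n-1)/2 \rfloor \le \lceil (n-1)/2 \rceil$; but $\Mid(P)$ by itself is always an os-dominating set when every element lies in a $3$-chain (each mid element is in $D$, and each extremal element $x$ is comparable to some mid element, since $x$ lies in a $3$-chain whose middle term is in $\Mid(P)$). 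Wait — an extremal $x$ in a $3$-chain $a < b < c$ might itself be $a$ or $c$, so $x$ is comparable to the mid element $b$; that dominates $x$. Thus in this case $\Mid(P)$ is an os-dominating set of size $\le \lceil (n-1)/2 \rceil$. So the two bounds $|\Mid(P)|$ and $|\Min(P)\cup\Max(P)|$ sum to $n$, and the smaller is $\le \lfloor n/2 \rfloor \le \lceil (n-1)/2 \rceil$ — except this gives $\lfloor n/2\rfloor$, which equals $\lceil (n-1)/2\rceil$ only when $n$ is odd; for $n$ even we'd get $n/2$ but need $n/2 - 1 + 1$... actually $\lceil (n-1)/2 \rceil = n/2$ for even $n$, so $\lfloor n/2 \rfloor = n/2 \le \lceil (n-1)/2\rceil$ holds.

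So the whole proposition seems to follow from: $\min\{|\Mid(P)|, |\Min(P) \cup \Max(P)|\} \le \lfloor n/2 \rfloor \le \lceil (n-1)/2 \rceil$, together with the fact that both $\Mid(P)$ and $\Min(P)\cup\Max(P)$ are os-dominating sets. The first is trivial since the two sets partition $X(P)$. The subtlety I would need to check carefully is the claim that $\Min(P) \cup \Max(P)$ is itself an os-dominating set: it dominates $\comp(P)$ because every mid element $x$ lies in a $3$-chain $a <_P x <_P c$, and we may take $a$ minimal and $c$ maximal (extend downward to a minimal element, upward to a maximal one), so $a, c \in \Min(P)\cup\Max(P)$ and the order-sensitive condition $a <_P x <_P c$ is satisfied with $a, c \in D$; extremal elements are in $D$ so nothing to check there. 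The main obstacle, then, is not really difficulty but bookkeeping: making sure the edge case $n$ small / the ceiling arithmetic works (e.g. $n=1$ gives bound $0$, and indeed $\Mid(P)=\varnothing$ forces $\gamma_{\os}(P)=\gamma(\comp(P))$ which for a one-element poset is... $1$, not $0$ — so one must note $P \in \mathcal{P}_3(k)$ with $k \ge 3$ forces $n \ge 3$, eliminating the degenerate case), and verifying the "extend to a minimal/maximal element" step is legitimate in every poset. I would write it as: $n \ge 3$, the sets $A = \Min(P)\cup\Max(P)$ and $B = \Mid(P)$ partition $X$, both are os-dominating, so $\gamma_{\os}(P) \le \min(|A|,|B|) \le \lfloor n/2 \rfloor \le \lceil (n-1)/2\rceil$, with a one-line check that equality of the last two quantities or the strict inequality holds for all $n \ge 2$.
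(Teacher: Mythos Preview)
Your proposal is correct and lands on essentially the same argument as the paper: both rely on the inequality $\gamma_{\os}(P)\le\min\{|\Mid(P)|,|\Min(P)\cup\Max(P)|\}$ together with the fact that these two sets partition $X(P)$. The paper phrases it as a proof by contradiction (assuming $\gamma_{\os}(P)>\lceil(n-1)/2\rceil$ and deriving $n\ge n+1$), while you argue directly that the minimum of two nonnegative integers summing to $n$ is at most $\lfloor n/2\rfloor=\lceil(n-1)/2\rceil$; this is a cosmetic difference, and your direct version is arguably cleaner once you strip away the exploratory first paragraph about linear extensions (which you rightly abandon).
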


\begin{proof}
Assume otherwise that $\gamma_{\os}(P)>\ceil*{\frac{n-1}{2}}$ for some poset $P\in \mathcal{P}_3(k)$ with $k\geq3$. Then, we have
$$ \ceil*{\frac{n-1}{2}}<\gamma_{\os}(P)\leq \min\{|\Mid(P)|,|\Min(P)\cup \Max(P)|\}$$
which forces that $\ceil*{\frac{n-1}{2}}+1\leq |\Mid(P)|$ and $\ceil*{\frac{n-1}{2}}+1\leq |\Min(P)\cup \Max(P)|$. Combining these inequalities, we have 

$$n=|\Mid(P)|+|\Min(P)\cup \Max(P)|\geq \ceil*{\frac{n+1}{2}}+\ceil*{\frac{n+1}{2}}\geq n+1 $$
which is a contradiction.
\end{proof}

Next, we will formulate the order-sensitive domination number of a given poset $P\in \mathcal{P}_{3}(k)$, in terms of classical domination number of the comparability graph of another poset constructed from $P$ itself.

Given a poset $P$, and let $\Mid(P)=\{a_1,a_2,\ldots,a_m\}$. We construct the poset $P^*$ as follows: for each element $a_i$ in $\Mid(P)$, we add two new elements $b_i$ and $c_i$ together with covering relations $c_i\prec a_i \prec b_i$. The resulting poset is denoted by $P^*$, in which we set $B=\{b_1,b_2,\ldots,b_m\}$ and $C=\{c_1,c_2,\ldots,c_m\}$ so that $X(P^*)=X(P)\cup B\cup C$ (see Figure~\ref{fig:pstar}). 

\begin{figure}[htb]
\centering     %%% not \cent
\subfigure{
\begin{tikzpicture}[scale=1.0]
\node [nodel] at (.3,-1) (a) [label=left:$x$] {};
\node [nodel] at (1.7,-1) (b) [label=right:$y$] {};
\node [nodel] at (-.25,.25) (c) [label=left:$a_1$]{}
	edge [line width=0.5mm] (a)
	edge [line width=0.5mm] (b);
\node [nodel] at (1.5,.25) (d) [label=below:$z$]{};
\node [nodel] at (2.25,.25) (e) [label=right:$a_2$]{}
	edge [line width=0.5mm] (b);
\node [nodel] at (-1,1.3) (f) [label=left:$a_3$]{}
	edge [line width=0.5mm] (c)
	edge [line width=0.5mm] (d);
\node [nodel] at (.8,1.3) (g) [label=right:$a_4$] {}
	edge [line width=0.5mm] (d);
\node [nodel] at (0,2.3) (h) [label=left:$u$] {}
	edge [line width=0.5mm] (f)
	edge [line width=0.5mm] (g);
\node [nodel] at (1.5,2.3) (k) [label=left:$v$] {}
	edge [line width=0.5mm] (g);
\node [nodel] at (3.2,2.3) (m) [label=left:$w$] {}
	edge [line width=0.5mm] (d)
	edge [line width=0.5mm] (e);
\end{tikzpicture} 
}
\hspace*{1cm}
\subfigure{
\begin{tikzpicture}[scale=1.0]
\node [nodel] at (.3,-1) (a) [label=left:$x$] {};
\node [nodel] at (1.7,-1) (b) [label=right:$y$] {};
\node [nodel] at (-.25,.25) (c) [label=left:$a_1$]{}
	edge [line width=0.5mm] (a)
	edge [line width=0.5mm] (b);
\node [nodel] at (-1,-1) (c1) [label=left:$c_1$] {} %for a1
	edge [line width=0.5mm,purple] (c);
\node [nodel] at (1.5,.25) (d) [label=below:$z$]{};
\node [nodel] at (2.25,.25) (e) [label=right:$a_2$]{}
	edge [line width=0.5mm] (b);
\node [nodel] at (3.25,1.3) (b2) [label=right:$b_2$] {} %for a2
	edge [line width=0.5mm,teal] (e);
\node [nodel] at (2.7,-1) (c2) [label=right:$c_2$] {} %for a2
	edge [line width=0.5mm,purple] (e);
\node [nodel] at (-1,1.3) (f) [label=left:$a_3$]{}
	edge [line width=0.5mm] (c)
	edge [line width=0.5mm] (d);
\node [nodel] at (-1.5,2.3) (b3) [label=left:$b_3$] {} %for a3
	edge [line width=0.5mm,teal] (f);
\node [nodel] at (-1.5,.25) (c3) [label=left:$c_3$] {} %for a3
	edge [line width=0.5mm,purple] (f);
\node [nodel] at (.2,1.3) (b1) [label=left:$b_1$] {} %for a1
	edge [line width=0.5mm,teal] (c);
\node [nodel] at (.8,1.3) (g) [label=right:$a_4$] {}
	edge [line width=0.5mm] (d);
\node [nodel] at (2.3,2.3) (k) [label=below:$b_4$] {} %for a4
	edge [line width=0.5mm,teal] (g);
\node [nodel] at (.35,.25) (k) [label=below right:$c_4$] {} %for a4
	edge [line width=0.5mm,purple] (g);
\node [nodel] at (0,2.3) (h) [label=left:$u$] {}
	edge [line width=0.5mm] (f)
	edge [line width=0.5mm] (g);
\node [nodel] at (1.5,2.3) (k) [label=left:$v$] {}
	edge [line width=0.5mm] (g);
\node [nodel] at (3.2,2.3) (m) [label=right:$w$] {}
	edge [line width=0.5mm] (d)
	edge [line width=0.5mm] (e);
\end{tikzpicture} 
}
\caption{$P$ and $P^*$. }
\label{fig:pstar}
\end{figure}

\begin{lemma}\label{lem:os-equal-comp-star}
$\gamma_{\os}(P)=\gamma(\comp(P^*))$ for every poset $P\in \mathcal{P}_3(k)$ with $k\geq 3$.
\end{lemma}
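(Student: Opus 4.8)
The plan is to prove the two inequalities $\gamma(\comp(P^*))\le\gamma_{\os}(P)$ and $\gamma_{\os}(P)\le\gamma(\comp(P^*))$ separately, after recording one routine structural fact about $P^*$. Writing $\leq^*$ for the order of $P^*$, its only new covering relations are $c_i\prec a_i$ and $a_i\prec b_i$; in the transitive closure these produce only pairs having some $c_i$ or $b_i$ as an endpoint, so $\leq^*$ restricted to $X(P)$ is exactly $\leq_P$. Hence $\comp(P)$ is an induced subgraph of $\comp(P^*)$, each $b_i$ is maximal and each $c_i$ is minimal in $P^*$, and $\Mid(P^*)=\Mid(P)$. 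Directly from the construction, for every $i$ the closed neighbourhoods in $\comp(P^*)$ satisfy $N[b_i]=\{b_i,a_i,c_i\}\cup D_P(\{a_i\})$ and $N[c_i]=\{c_i,a_i,b_i\}\cup U_P(\{a_i\})$, and both are contained in $N[a_i]$. I would only sketch these.

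For $\gamma(\comp(P^*))\le\gamma_{\os}(P)$: take a minimum os-dominating set $D$ of $P$ and check that it dominates $\comp(P^*)$. Vertices of $X(P)$ are fine, since $D$ dominates the induced subgraph $\comp(P)$. For a new vertex $b_i$: either $a_i\in D$ (and $a_i\in N(b_i)$), or the order-sensitive condition yields $a,b\in D$ with $a<_Pa_i<_Pb$, so that $a\in D_P(\{a_i\})\subseteq N(b_i)$; the vertex $c_i$ is handled symmetrically using an element of $D$ lying strictly above $a_i$. Thus $D$ is a dominating set of $\comp(P^*)$, which gives the inequality.

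For $\gamma_{\os}(P)\le\gamma(\comp(P^*))$: take a minimum dominating set $D'$ of $\comp(P^*)$. Since $N[b_i]\subseteq N[a_i]$ and $N[c_i]\subseteq N[a_i]$, replacing every $b_i$ and $c_i$ occurring in $D'$ by $a_i$ keeps it dominating and does not increase its size, so I may assume $D'\subseteq X(P)$. Then $D'$ dominates $\comp(P)$, and for each $a_i\in\Mid(P)$: because $D'$ dominates $b_i$ and $D'\subseteq X(P)$, either $a_i\in D'$ or $D'$ meets $D_P(\{a_i\})$; because $D'$ dominates $c_i$, either $a_i\in D'$ or $D'$ meets $U_P(\{a_i\})$. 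Combining, either $a_i\in D'$ or there are $a,b\in D'$ with $a<_Pa_i<_Pb$ --- that is, $D'$ is an os-dominating set of $P$ --- which yields the inequality and hence equality.

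The only genuinely non-mechanical point is noticing that dominating the two private elements $b_i,c_i$ in $\comp(P^*)$ is exactly equivalent to the two halves (``an element of $D$ strictly below $a_i$'' and ``an element of $D$ strictly above $a_i$'') of the order-sensitive requirement at $a_i$; once that is seen, both directions are routine closed-neighbourhood bookkeeping and I foresee no real obstacle. One should also note that the replacement step in the second inequality behaves correctly when $a_i$ already lies in $D'$, in which case the set simply shrinks, which is harmless.
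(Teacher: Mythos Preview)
Your proof is correct and follows essentially the same approach as the paper's: show that an os-dominating set of $P$ already dominates $\comp(P^*)$, and in the other direction replace each $b_i,c_i$ appearing in a dominating set by $a_i$ via $N[b_i],N[c_i]\subseteq N[a_i]$, then read off the order-sensitive condition from the fact that $b_i$ and $c_i$ are dominated. One minor point: for $k\geq 4$ your stated formula $N[b_i]=\{b_i,a_i,c_i\}\cup D_P(\{a_i\})$ omits the vertices $c_j$ with $a_j<_P a_i$, but since your argument only actually uses the facts $D_P(\{a_i\})\subseteq N(b_i)$, $N[b_i]\subseteq N[a_i]$, and $N[b_i]\cap X(P)=\{a_i\}\cup D_P(\{a_i\})$ --- all of which remain true --- the proof is unaffected.
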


\begin{proof}
We first consider an os-dominating set $D$ in $P$. Note that if $\Mid(P)\subseteq D$, then $D$ is also a dominating set in $\comp(P^*)$ since $\Mid(P)$ dominates all vertices of $B\cup C$ in $\comp(P^*)$.

Thus, we may assume that there exist some $a_i\in \Mid(P)$ such that $a_i\notin D$. By definition of order-sensitive domination, $a_i\in U_P(D)\cap D_P(D)$. This implies that we have two vertices $a',a''\in D\cap X(P)$ such that $a_i<_Pa'$ and $a''<_Pa_i$. It follows that the vertices $a'$ and $a''$ dominate newly added vertices $b_i$ and $c_i$ in $\comp(P^*)$, since $b_i\in B$ (resp. $c_i\in C$) is adjacent to all vertices of $U_p(a_i)$ (resp. $D_P(a_i)$) in $\comp(P^*)$. However, this in turn implies that $D$ is a dominating set in $\comp(P^*)$. Therefore $\gamma(\comp(P^*))\leq |D|$.

Conversely, let $F$ be a dominating set in $\comp(P^*)$. We first note that if $F\cap(B\cup C)= \emptyset$, then $F$ is an os-dominating set for $P$. Indeed, the only way to dominate any vertex $b_i\in B$ (resp. $c_i\in C$) is to take a vertex in $U_P[a_i]$ (resp. $D_P[a_i]$) for $a_i\in \Mid(P)$. Then, either $a_i$ or a vertex in $U_P(a_i)$ (resp. $D_P(a_i)$) belongs to $F$. This coincidences with the definition of order-sensitive domination. Therefore, $F$ is a order-sensitive dominating set in $P$. 

We may therefore suppose that the intersection $F\cap (B\cup C)$ is non-empty. We assume without loss of generality that $b_i\in F\cap (B\cup C)$. It then follows that $F^*=(F-b_i)\cup \{a_i\}$ is still a dominating set in $\comp(P^*)$ containing the middle element $a_i\in F^*$, since, in the graph $\comp(P^*)$, the vertex $a_i$ is adjacent to every vertex in $U_P(a_i)\cap D_P(a_i)\cup \{b_i,c_i\}$. This means that every vertex in $F\cap (B\cup C)$ can be replaced with the corresponding element in $\Mid(P)$ so as to create a new dominating set $F^*\subset X(P)$ in $\comp(P^*)$ with $|F^*|\leq|F|$. However, such a set $F^*$ is obviously an os-dominating set in $P$. Therefore, $\gamma_{\os}(P)\leq |F^*|\leq |F|$.
\end{proof}

We remark that for a given poset $P=(X,\leq)\in \mathcal{P}_3(k)$, the addition of extra comparabilities to $P$ between every pair of maximal and  minimal elements which are incomparable in $P$ does not affect the os-domination number. In other words, denote by $P^{m}$, the poset obtained from $P$ by adding  comparabilities $x<y$ such that $x\parallel_P y$ with $x\in \Min(P)$ and $y\in \Max(P)$. 

\begin{observation}\label{obs:up-down}
$\gamma_{\os}(P)=\gamma_{\os}(P^{m})$ for every poset $P\in \mathcal{P}_3(k)$ with $k\geq 3$.
\end{observation}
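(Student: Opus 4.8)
The plan is to prove the slightly stronger statement that a set $D\subseteq X(P)$ is an os-dominating set of $P$ if and only if it is an os-dominating set of $P^m$; since $P$ and $P^m$ share the same ground set, the claimed equality $\gamma_{\os}(P)=\gamma_{\os}(P^m)$ follows at once. First I would record the structural facts: a minimal (resp.\ maximal) element of $P$ stays minimal (resp.\ maximal) in $P^m$, and the added relations never involve an element of $\Mid(P)$, so $\Min(P^m)=\Min(P)$, $\Max(P^m)=\Max(P)$ and $\Mid(P^m)=\Mid(P)$; moreover $P^m$ still has height $k$ and still lies in $\mathcal{P}_3(k)$, and $\comp(P)$ is a spanning subgraph of $\comp(P^m)$.

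For the direction $\gamma_{\os}(P^m)\leq\gamma_{\os}(P)$ I would just check that an os-dominating set $D$ of $P$ is one of $P^m$: it dominates $\comp(P^m)$ because it already dominates the spanning subgraph $\comp(P)$, and for $x\in\Mid(P^m)=\Mid(P)$ any order-sensitive witnesses $a<_P x<_P b$ with $a,b\in D$ remain witnesses since $<_P\,\subseteq\,<_{P^m}$.

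The substantive direction is $\gamma_{\os}(P)\leq\gamma_{\os}(P^m)$, where I would show that \emph{any} os-dominating set $D$ of $P^m$ is already an os-dominating set of $P$. The order-sensitive condition transfers back almost for free: the relations added to form $P^m$ join only a minimal element to a maximal element, so if $x\in\Mid(P)$ and $a<_{P^m}x<_{P^m}b$ with $a,b\in D$, then neither added relation can be involved and $a<_P x<_P b$. The only thing left to exclude is that $D$ fails to dominate some $v\notin D$ in $\comp(P)$. If that happened, then since every edge of $\comp(P^m)$ absent from $\comp(P)$ joins $\Min(P)$ to $\Max(P)$, the vertex $v$ would have to lie in $\Min(P)\cup\Max(P)$; assume $v\in\Min(P)$ (the maximal case is dual). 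Then $N_{\comp(P)}(v)=U_P(v)$, so $U_P(v)\cap D=\emptyset$. Using $P\in\mathcal{P}_3(k)$, choose a chain $v<_P c<_P e$ in $P$, so that $c\in\Mid(P)$; then $c\in U_P(v)$ forces $c\notin D$, and applying the order-sensitive condition to $c\in\Mid(P^m)$ yields some $q\in D$ with $c<_{P^m}q$, which (as $c\notin\Min(P)$) is a relation $c<_P q$ of $P$. Transitivity then gives $q\in U_P(v)\cap D$, a contradiction.

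The point I expect to require the cleanest idea — the ``hard part'' — is precisely this last step. A naive repair that swaps out the element of $D$ dominating $v$ through a new edge runs into trouble, because an element of $D$ lying above $v$ in $P$ need bear no relation to the rest of $D$. The trick is to repair nothing: the hypothesis $P\in\mathcal{P}_3(k)$ forces a middle element $c$ above $v$, and the order-sensitive requirement imposed on $c$ itself then returns a member of $D$ that is provably above $v$ in $P$. Once that observation is in hand, the whole argument is only a few lines.
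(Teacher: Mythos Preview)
Your argument is correct. The paper records this result as an \emph{Observation} and gives no proof at all, so there is nothing to compare against line by line; your write-up supplies exactly the verification the authors left implicit. The structural preliminaries ($\Min$, $\Max$, $\Mid$ are unchanged, $P^m$ is still a poset in $\mathcal{P}_3(k)$, and $\comp(P)$ sits inside $\comp(P^m)$) are all as you say, and both directions of the equivalence go through. The one place that needs a genuine idea is indeed the domination of a minimal (or, dually, maximal) element $v\notin D$ in $\comp(P)$, and your use of the hypothesis $P\in\mathcal{P}_3(k)$ to produce a middle element $c>v$ and then invoke the order-sensitive condition at $c$ is the right move; without the $\mathcal{P}_3(k)$ assumption this step would fail (e.g.\ for a height-$3$ poset containing an isolated covering pair $v\prec y$, adding the max--min edges could let $\{y'\}$ dominate $v$ in $P^m$ via a new edge while nothing in $D\cap U_P(v)$ exists).

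One cosmetic remark: you frame the last step as ``the hard part'', but from the authors' perspective the whole observation is immediate once you notice that the os-condition on $\Mid(P)$ is literally unchanged and that the $\mathcal{P}_3(k)$ hypothesis lets any extremal element borrow a dominator from a neighbouring middle element. So your instincts about where the content lies are right, but the paper's choice to leave it unproved is also reasonable.
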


Now we show that for every poset $P$ with $height(P)\geq 4$, there exists a height three poset $Q$ on the same ground set such that $\gamma_{\os}(P)=\gamma_{\os}(Q)$. Therefore, the os-domination problem for posets can be reduced to the case of height three. We next describe the construction of such a poset.

\begin{definition}
Let $P=(X,\leq)$ be a poset with $height(P)\geq 4$. The \emph{reduction poset} $\red(P)$ of $P$, is the height three poset on $X$, obtained from $P$ by removing all the comparabilites among the elements of $\Mid(P)$ and preserving the remaining relations in $P$ (see Figure~\ref{fig:reduction}).
\end{definition}

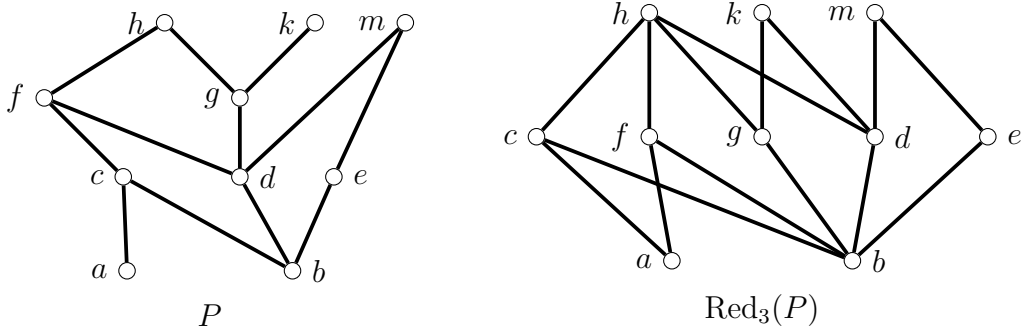
\begin{figure}[htb]
\centering     %%% not \center
\subfigure{
\begin{tikzpicture}[scale=1.0]
\node [nodel] at (-.5,-1) (a) [label=left:$a$] {};
\node [nodel] at (1.7,-1) (b) [label=right:$b$] {};
\node [nodel] at (-.55,.25) (c) [label=left:$c$]{}
	edge [line width=0.5mm] (a)
	edge [line width=0.5mm] (b);
\node [nodel] at (1,.25) (d) [label=right:$d$]{}
	edge [line width=0.5mm] (b);
\node [nodel] at (2.25,.25) (e) [label=right:$e$]{}
	edge [line width=0.5mm] (b);
\node [nodel] at (-1.6,1.3) (f) [label=left:$f$]{}
	edge [line width=0.5mm] (c)
	edge [line width=0.5mm] (d);
\node [nodel] at (1,1.3) (g) [label=left:$g$] {}
	edge [line width=0.5mm] (d);
\node [nodel] at (0,2.3) (h) [label=left:$h$] {}
	edge [line width=0.5mm] (f)
	edge [line width=0.5mm] (g);
\node [nodel] at (2,2.3) (k) [label=left:$k$] {}
	edge [line width=0.5mm] (g);
\node [nodel] at (3.2,2.3) (m) [label=left:$m$] {}
	edge [line width=0.5mm] (d)
	edge [line width=0.5mm] (e);
\node [below] at (0.6,-1.3) {$P$};
\end{tikzpicture} 
}
\hspace*{.5cm}
\subfigure{
\begin{tikzpicture}[scale=1.0]
\node [nodel] at (1.8,-1) (a) [label=left:$a$] {};
\node [nodel] at (4.2,-1) (b) [label=right:$b$] {};
\node [nodel] at (0,0.65) (c) [label=left:$c$]{}
	edge [line width=0.5mm] (a)
	edge [line width=0.5mm] (b);
\node [nodel] at (4.5,0.65) (d) [label=right:$d$]{}
	edge [line width=0.5mm] (b);
\node [nodel] at (6,0.65) (e) [label=right:$e$]{}
	edge [line width=0.5mm] (b);
\node [nodel] at (1.5,0.65) (f) [label=left:$f$]{}
	edge [line width=0.5mm] (a)
	edge [line width=0.5mm] (b);
\node [nodel] at (3,0.65) (g) [label=left:$g$] {}
	edge [line width=0.5mm] (b);
\node [nodel] at (1.5,2.3) (h) [label=left:$h$] {}
	edge [line width=0.5mm] (f)
	edge [line width=0.5mm] (d)
	edge [line width=0.5mm] (g)
	edge [line width=0.5mm] (c);
\node [nodel] at (3,2.3) (k) [label=left:$k$] {}
	edge [line width=0.5mm] (g)
	edge [line width=0.5mm] (d);
\node [nodel] at (4.5,2.3) (m) [label=left:$m$] {}
	edge [line width=0.5mm] (d)
	edge [line width=0.5mm] (e);
\node [below] at (3,-1.3) {$\red(P)$};
\end{tikzpicture}  
}
\caption{A poset $P$ and its reduction $\red(P)$}
\label{fig:reduction}
\end{figure}

\begin{proposition}\label{prop:poset-red3}
If $P\in \mathcal{P}_{3}(k)$ is a poset for some $k\geq 4$, then
$\gamma_{\os}(P)=\gamma_{\os}(\red(P))$.
\end{proposition}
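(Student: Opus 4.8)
The plan is to prove the two inequalities $\gamma_{\os}(P)\le \gamma_{\os}(\red(P))$ and $\gamma_{\os}(\red(P))\le\gamma_{\os}(P)$ separately, the first being essentially formal and the second requiring an exchange argument on a minimum os-dominating set. First I would record the basic structure of $\red(P)$: since we only delete comparabilities internal to $\Mid(P)$, deleting relations can never destroy a comparability of the form $\Min$--$\Mid$, $\Mid$--$\Max$ or $\Min$--$\Max$, so $\Min(\red(P))=\Min(P)$, $\Max(\red(P))=\Max(P)$ and $\Mid(\red(P))=\Mid(P)$; moreover $\comp(\red(P))$ is obtained from $\comp(P)$ by deleting exactly the edges inside $\Mid(P)$, hence $\Mid(P)$ is independent in $\comp(\red(P))$ while every minimal and every maximal element keeps the same neighbourhood as in $\comp(P)$. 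For the easy direction, every order relation of $\red(P)$ is an order relation of $P$ and $\comp(\red(P))$ is a spanning subgraph of $\comp(P)$, so any os-dominating set of $\red(P)$ is at once an os-dominating set of $P$ (domination transfers to the larger graph, and an os-witness chain $a<_{\red(P)}x<_{\red(P)}b$ is also a chain in $P$); thus $\gamma_{\os}(P)\le\gamma_{\os}(\red(P))$.

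For the reverse inequality, fix a minimum os-dominating set $D$ of $P$ and manufacture from it an os-dominating set $D'$ of $\red(P)$ with $|D'|\le|D|$. Call a mid element $v\notin D$ \emph{low-deficient} if no minimal element of $D$ lies below $v$, and \emph{high-deficient} if no maximal element of $D$ lies above $v$; for a low-deficient $v$ the lower witness $a$ in an os-certificate $a<v<b$ must be a mid element of $D$ (it is not maximal since $a<v$, and not minimal since $v$ is low-deficient), and I call the mid elements of $D$ obtained this way the \emph{low servers}, the set $L$; dually one gets the \emph{high servers}, the set $R\subseteq D\cap\Mid(P)$. The crucial lemma is that $L\cap R=\varnothing$. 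Suppose $w\in D\cap\Mid(P)$ were both a low server and a high server, witnessed by $v_2<w<v_1$ with $v_1$ low-deficient and $v_2$ high-deficient; one checks that then $v_2$ is low-deficient and $v_1$ is high-deficient as well (any minimal element of $D$ below $v_2$ would lie below $v_1$, and any maximal element of $D$ above $v_1$ would lie above $v_2$), so their os-witnesses furnish $a',b\in D\cap\Mid(P)$ with $a'<v_2<w<v_1<b$. A direct check then shows $D\setminus\{w\}$ is still an os-dominating set of $P$ — $w$ is os-dominated by $a'<w<b$, any former $D$-neighbour of a vertex below $w$ is replaced by $b$ and above $w$ by $a'$, and any mid element whose only os-witness was $w$ is rescued by $a'$ or $b$ — contradicting minimality of $D$.

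Granting the lemma, put $D':=(D\setminus(L\cup R))\cup\{\mu(w):w\in L\}\cup\{\nu(w):w\in R\}$, where for each low server $w$ we choose a minimal $\mu(w)<w$ and for each high server $w$ a maximal $\nu(w)>w$. Since $L\cap R=\varnothing$ and $L,R\subseteq D\cap\Mid(P)$, we get $|D'|\le |D|-|L\cup R|+|L|+|R|=|D|$. It remains to check the three os-domination conditions for $D'$ in $\red(P)$: (i) every minimal element lies in $D'$ or has a neighbour in $D'$; (ii) dually for maximal elements; (iii) every mid element $v$ lies in $D'$ or has a minimal element of $D'$ below it and a maximal element of $D'$ above it. Condition (iii) is a short case analysis on whether $v\in L$, $v\in R$, $v$ is low- or high-deficient, or none of these; the point is that pushing a low server down (resp. a high server up) produces precisely the minimal (resp. maximal) witness demanded by $\red(P)$, and a non-deficient $v$ already has such witnesses in $D$. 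For (i), the only threat is a minimal element $u\notin D$ all of whose $D$-neighbours are low servers; but chasing the os-witnesses upward from $u$ through such servers yields an infinite strictly increasing chain, impossible in a finite poset — so $u$ has a $D$-neighbour that is maximal, a kept mid element, or a high server, each of which (possibly after being pushed up to $\nu(\cdot)$) lands in $D'$ and still dominates $u$ in $\comp(\red(P))$. Condition (ii) is symmetric. Hence $\gamma_{\os}(\red(P))\le|D'|\le|D|=\gamma_{\os}(P)$, completing the argument.

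I expect the key lemma $L\cap R=\varnothing$ to be the main obstacle: it is exactly what balances the replacement budget (a mid element that repairs a deficiency in one direction costs one min or one max, but one repairing deficiencies in both directions would cost two), and its proof requires spotting the right five-element configuration $a'<v_2<w<v_1<b$ and verifying the deletion exchange. The finite-chain argument ruling out orphaned minimal/maximal elements in conditions (i)--(ii) is the secondary technical point; everything else is bookkeeping.
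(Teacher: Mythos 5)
Your argument is correct, but it follows a genuinely different route from the paper's. The paper also pushes mid witnesses out to the extremes, but it does so by an iterative, pairwise exchange: for each ``mid-conflict'' element $x\notin D$ (one all of whose witness pairs contain a mid element) it fixes one certificate $a<x<b$ and replaces \emph{both} witnesses at once by some $a'\in\Min(P)$, $b'\in\Max(P)$ with $a'\leq a$ and $b\leq b'$; the chain $a'\leq a<x<b\leq b'$ makes every duty of $a$ or $b$ transferable to $a'$ or $b'$, so each step keeps an os-dominating set of $P$ of no larger size, the number of mid elements of $D$ strictly drops (hence termination), and domination of minimal/maximal elements in $\red(P)$ comes for free since their comparabilities are untouched. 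In particular the paper needs neither minimality of $D$ nor any disjointness statement. You instead perform a one-shot global swap with one-directional pushes, so your budget must be balanced by the key lemma $L\cap R=\varnothing$, whose proof (the configuration $a'<v_2<w<v_1<b$ and the deletion of $w$) is correct but genuinely uses minimality of $D$; and because low servers are only pushed down (and high servers only up), you additionally need the chain-chasing argument to keep minimal and maximal elements dominated -- that argument is also correct. What your route buys is a single explicit replacement set $D'$ and a structural statement (no mid element of a minimum $D$ serves deficient elements in both directions) that is interesting in its own right; what the paper's route buys is shorter bookkeeping. One sub-case of your condition (iii) deserves to be written out rather than waved at: a removed low server $w\in L$ gets its minimal witness $\mu(w)$, but you must also exhibit a maximal element of $D'$ above $w$. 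This does follow from what you have: the low-deficient $v_1>w$ that $w$ serves receives a maximal witness in $D'$ (a retained maximal element of $D$ if $v_1$ is not high-deficient, and $\nu(b)$ for its high-server upper witness $b$ otherwise), and that maximal element lies above $w$ as well; the dual remark handles removed high servers. With that sentence added, your proof is complete.
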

\begin{proof}
Assume that $D\subseteq X(P)$ is an os-dominating set for $P$. We call an element $x\in \Mid(P)$ as a \emph{mid-conflict element} with respect to $D$ if 
\begin{itemize}
\item[•] $x\notin D$, and\\
\item[•] if $z<x<y$ for some $z,y\in D$, then $|\Mid(P)\cap \{z,y\}|\geq 1$.
\end{itemize}

Note that if $x\in \Mid(P)\backslash D$ such that $x$ is not a mid-conflict element with respect to $D$, then there exist 
$y\in D\cap \Max(P)$ and $z\in D\cap \Min(P)$ such that $z<x<y$ in $P$. However, such a domination also holds in $\red(P)$.

Now, assume that $x\in \Mid(P)$ is a mid-conflict element with respect to $D$.  Since $D$ is os-dominating in $P$, it then follows that $a<x<b$ for some $a,b\in D$. Choose $b'\in \Max(P)$ and $a'\in \Min(P)$ such that $b\leq b'$ and $a'\leq a$ hold in $P$, and define $D_x:=(D\setminus \{a,b\})\cup \{a',b'\}$. Observe first that $D_x$ is an os-dominating set in $P$ such that $|D|=|D_x|$. We may clearly repeat this process for every element $u\in \Mid(P)$ such that if $v<u<w$ holds for some $v,w\in D$ and $|\Mid(P)\cap \{v,w\}|\geq 1$. Since $P$ is finite, such a process will eventually terminate creating an os-dominating set $D^*$ in $P$ without any mid-conflict elements. The resulting set $D^*$ is clearly an os-dominating set in $\red(P)$. This verifies that $\gamma_{\os}(\red(P))\leq \gamma_{\os}(P)$.

Conversely, let $T\subseteq X$ be an os-dominating set for
$\red(P)$. We claim that $T$ is itself an os-dominating set for $P$. Let $u\in \Mid(P)$ be given such that $u\notin T$. Then there exist $v\in T\cap \Min(P)$ and $w\in T\cap \Max(P)$ such that $v<u<w$ in $\red(P)$. However, the comparabilities $v<u<w$ also hold in $P$ as well. This proves the claim. Therefore, we conclude that
$\gamma_{\os}(P)\leq \gamma_{\os}(\red(P))$.  
\end{proof}

%%%%%%%%%%%%%%%%%%%%%%%%%%%%%%%%%%%%%%%%%%%%%%%%%%%%%%%%%%

\section{Order-sensitive domination in graphs}\label{sect:os-dom-graphs}
In this section, we extend the notion of order-sensitive domination to general graphs. We begin with recalling the following bipartite transformation associated to any graph, that was introduced by Alon \cite{NA}.
\begin{definition}
For a graph $G$ with vertex set $V=\{u_1,u_2,\ldots,u_n\}$, the \emph{extended double cover} $B_e(G)$ of $G$ is the bipartite graph on $X\cup Y$, where $X=\{x_1,x_2,\ldots,x_n\}$ and $Y=\{y_1,y_2,\ldots,y_n\}$, in which $x_i$ and $y_j$ are adjacent if and only if $i=j$ or $u_i$ and $u_j$ are adjacent in $G$.
\end{definition}

We next construct a graded poset of height $k\geq 2$ from any given graph as follows.
Let $G$ be a graph on $[n]$ for some positive integer $n$. The ground set of the poset $\mathscr{P}_k(G)$, we associate to $G$ is
$$X(G):=\{x_{(l,i)}\colon l\in [k]\;\textnormal{and}\;i\in [n]\}$$ 
such that for each $r\in [k-1]$ and $i,j\in [n]$,
\begin{itemize}
\item[•] $x_{(r,i)}\prec x_{(r+1,j)}$ in $\mathscr{P}_k(G)$ if and only if either $i=j$ or $ij\in E(G)$ holds.
\end{itemize}

\begin{example} In Figure~\ref{fig:fig2}, we depict the corresponding posets of the $4$-path $P_4$ and the $4$-cycle $C_4$ when $k=3$. 
\begin{figure}[ht]
\centering     %%% not \center
\subfigure{
\begin{tikzpicture}[scale=.7]
\node [nodel] at (0,0) (v1) []{};
\node [nodel] at (0,2) (v2) []{}
               edge [line width=0.5mm] (v1);
\node [nodel] at (0,4) (v3) []{}
               edge [line width=0.5mm] (v2);
\node [nodel] at (2,0) (u1) []{}
               edge [line width=0.5mm] (v2);
\node [nodel] at (2,2) (u2) []{}
           edge [line width=0.5mm] (u1)
           edge [line width=0.5mm] (v1)
           edge [line width=0.5mm] (v3);
\node [nodel] at (2,4) (u3) []{}
           edge [line width=0.5mm] (u2)
           edge [line width=0.5mm] (v2);
\node [nodel] at (4,0) (z1) []{}
           edge [line width=0.5mm] (u2);
\node [nodel] at (4,2) (z2) []{}
           edge [line width=0.5mm] (z1)
           edge [line width=0.5mm] (u1)
           edge [line width=0.5mm] (u3);
\node [nodel] at (4,4) (z3) []{}
           edge [line width=0.5mm] (z2)
           edge [line width=0.5mm] (u2);           
\node [nodel] at (6,0) (w1) []{}
           edge [line width=0.5mm] (z2);
\node [nodel] at (6,2) (w2) []{}
           edge [line width=0.5mm] (w1)
           edge [line width=0.5mm] (z1)
           edge [line width=0.5mm] (z3);
\node [nodel] at (6,4) (w3) []{}
           edge [line width=0.5mm] (w2)
           edge [line width=0.5mm] (z2);  
\end{tikzpicture}
}
\hspace*{2cm}
\subfigure{
\begin{tikzpicture}[scale=0.7]
\node [nodel] at (0,0) (v1) []{};
\node [nodel] at (0,2) (v2) []{}
               edge [line width=0.5mm] (v1)
               edge [line width=0.5mm] (w1);
\node [nodel] at (0,4) (v3) []{}
               edge [line width=0.5mm] (v2)
               edge [line width=0.5mm] (w2);
\node [nodel] at (2,0) (u1) []{}
               edge [line width=0.5mm] (v2);
\node [nodel] at (2,2) (u2) []{}
           edge [line width=0.5mm] (u1)
           edge [line width=0.5mm] (v1)
           edge [line width=0.5mm] (v3);
\node [nodel] at (2,4) (u3) []{}
           edge [line width=0.5mm] (u2)
           edge [line width=0.5mm] (v2);
\node [nodel] at (4,0) (z1) []{}
           edge [line width=0.5mm] (u2);
\node [nodel] at (4,2) (z2) []{}
           edge [line width=0.5mm] (z1)
           edge [line width=0.5mm] (u1)
           edge [line width=0.5mm] (u3);
\node [nodel] at (4,4) (z3) []{}
           edge [line width=0.5mm] (z2)
           edge [line width=0.5mm] (u2);           
\node [nodel] at (6,0) (w1) []{}
           edge [line width=0.5mm] (z2);
\node [nodel] at (6,2) (w2) []{}
           edge [line width=0.5mm] (w1)
           edge [line width=0.5mm] (z1)
           edge [line width=0.5mm] (z3)
           edge [line width=0.5mm] (v1);
\node [nodel] at (6,4) (w3) []{}
           edge [line width=0.5mm] (w2)
           edge [line width=0.5mm] (z2)
           edge [line width=0.5mm] (v2);  
\end{tikzpicture}
}
\caption{$\mathscr{P}_3(P_4)$ and $\mathscr{P}_3(C_4)$.}
\label{fig:fig2}
\end{figure}
\end{example}

Notice that for given a graph $G$, the comparability graph of the poset $\mathscr{P}_2(G)$ corresponds to the bipartite graph $B_e(G)$.

\begin{lemma}\label{lem:os-dom-3-graphs}
$\gamma_{\os}(\mathscr{P}_3(G))=\min\{|A|+2\gamma(G-A): \, A\subseteq V\}$ for every graph $G$.
\end{lemma}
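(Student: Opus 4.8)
The statement relates $\gamma_{\os}(\mathscr{P}_3(G))$ to a minimization over vertex subsets $A\subseteq V$ of the quantity $|A|+2\gamma(G-A)$. The plan is to prove both inequalities directly by analyzing the structure of $\mathscr{P}_3(G)$, whose three layers consist of $X(G)=\{x_{(1,i)}\}\cup\{x_{(2,i)}\}\cup\{x_{(3,i)}\}$ with $\Min$ the bottom layer, $\Mid(\mathscr{P}_3(G))$ the middle layer $\{x_{(2,i)}:i\in[n]\}$, and $\Max$ the top layer. A key structural observation I would record first: in $\comp(\mathscr{P}_3(G))$, a middle vertex $x_{(2,i)}$ is comparable to $x_{(1,j)}$ (resp. $x_{(3,j)}$) iff $i=j$ or $ij\in E(G)$, i.e. iff $j\in N_G[i]$; and two middle vertices $x_{(2,i)},x_{(2,j)}$ are incomparable. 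So the middle layer is an independent set in $\comp(\mathscr{P}_3(G))$ of size $n$, and the neighborhoods ``across'' layers are exactly closed neighborhoods in $G$.

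\textbf{Upper bound ($\leq$).} Given $A\subseteq V$ and a minimum dominating set $S$ of $G-A$ with $|S|=\gamma(G-A)$, I would build an os-dominating set $D$ of $\mathscr{P}_3(G)$ as follows: put $x_{(2,a)}\in D$ for every $a\in A$ (these middle elements are automatically ``order-sensitive-ok'' since they lie in $D$), and put both $x_{(1,s)}$ and $x_{(3,s)}$ in $D$ for every $s\in S$. Then $|D|=|A|+2|S|=|A|+2\gamma(G-A)$. I must check $D$ is a dominating set of $\comp(\mathscr{P}_3(G))$ and order-sensitive. For a middle element $x_{(2,i)}$ with $i\notin A$: since $S$ dominates $G-A$, either $i\in S$ or $i$ has a neighbor $s\in S$ in $G-A$; in the former case $x_{(1,i)},x_{(3,i)}\in D$ sandwich it, in the latter $x_{(1,s)}<x_{(2,i)}<x_{(3,s)}$ in $\mathscr{P}_3(G)$ since $is\in E(G)$ — so the order-sensitivity condition holds, and this also dominates $x_{(2,i)}$. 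For a minimal element $x_{(1,i)}$: it is dominated (in the graph sense) by some element above it — indeed $x_{(1,i)}$ is comparable to $x_{(2,i)}$, which is either in $D$ (if $i\in A$) or is sandwiched as above, forcing $x_{(1,s)}$ or similar... I need to be slightly careful here and should instead argue that $x_{(1,i)}$ is comparable to $x_{(2,j)}$ for every $j\in N_G[i]$, and to $x_{(3,j)}$ for every $j\in N_G[i]$; picking $j\in S\cap N_G[i]$ or $j\in A\cap N_G[i]$ if one exists, otherwise noting every vertex of $G$ lies in $A\cup N_G[S]$ when we additionally observe that $A\cup N_{G-A}[S]\supseteq V$... this requires that $S$ dominates $G-A$, so $V\setminus A\subseteq N_{G-A}[S]\subseteq N_G[S]$, hence $V=A\cup N_G[S]$, giving the needed comparability. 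Symmetric reasoning handles maximal elements $x_{(3,i)}$. Hence $D$ is os-dominating and $\gamma_{\os}(\mathscr{P}_3(G))\leq|A|+2\gamma(G-A)$ for all $A$, giving the $\leq$ direction.

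\textbf{Lower bound ($\geq$).} Let $D$ be a minimum os-dominating set of $\mathscr{P}_3(G)$. Define $A:=\{i\in[n]:x_{(2,i)}\in D\}$, the set of indices whose middle element is chosen. For each $i\notin A$, order-sensitivity forces $a<x_{(2,i)}<b$ for some $a,b\in D$; necessarily $a=x_{(1,j)}$ with $j\in N_G[i]$ and $b=x_{(3,\ell)}$ with $\ell\in N_G[i]$. Let $S_1:=\{j:x_{(1,j)}\in D\}$ and $S_3:=\{\ell:x_{(3,\ell)}\in D\}$. The above shows that every $i\notin A$ has a neighbor (in $G$, closed) in $S_1$ and one in $S_3$; in particular both $S_1$ and $S_3$ dominate $G-A$ (after intersecting the closed-neighborhood witnesses appropriately — if the witness $j\in S_1\cap N_G[i]$ happens to lie in $A$, one still has $j\in N_{G-A}[i]$ fails only if $j\in A$; but $j\in A$ means $x_{(2,j)}\in D$, and then I can reassign — the cleanest fix is to note $S_1\cup A$ is a dominating set of $G$, since for $i\notin A$ we found $j\in S_1\cap N_G[i]$, so $S_1$ dominates $G-A$ in $G$, i.e. $S_1$ is a dominating set of $G-A$). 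Thus $\gamma(G-A)\leq\min(|S_1|,|S_3|)\leq\tfrac12(|S_1|+|S_3|)$. Therefore $|D|\geq|A|+|S_1|+|S_3|\geq|A|+2\gamma(G-A)\geq\min_{A'\subseteq V}\{|A'|+2\gamma(G-A')\}$, which is the $\geq$ direction. Combining the two bounds completes the proof.

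\textbf{Main obstacle.} I expect the delicate point to be the bookkeeping around closed neighborhoods in the lower bound: the order-sensitivity witnesses for $x_{(2,i)}$ naturally give indices in $N_G[i]$, and one must verify that these witness sets genuinely dominate $G-A$ (as opposed to merely $G$), which hinges on the definition of ``dominating set of $G-A$'' allowing dominators from anywhere in $G$ — equivalently, that $S_1$ with $|S_1|$ vertices dominates all of $V\setminus A$. A secondary subtlety in the upper bound is making sure the minimal and maximal layers are graph-dominated by $D$; this follows from $V=A\cup N_G[S]$, but should be stated explicitly. Neither is a serious difficulty, so I anticipate a short, clean proof.
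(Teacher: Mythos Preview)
Your plan matches the paper's proof essentially verbatim: split a minimum os-dominating set $D$ by layers into $A=\{i:x_{(2,i)}\in D\}$, $S_1=\{j:x_{(1,j)}\in D\}$, $S_3=\{\ell:x_{(3,\ell)}\in D\}$, argue that $S_1$ and $S_3$ each dominate $G-A$ to obtain $|D|\geq |A|+2\gamma(G-A)$, and for the reverse inequality build the obvious set $\{x_{(2,a)}:a\in A\}\cup\{x_{(1,s)},x_{(3,s)}:s\in S\}$ from a minimum dominating set $S$ of $G-A$. The subtlety you flag---whether the witnesses for $S_1$ actually lie in $V\setminus A$---is left equally implicit in the paper; if it concerns you, observe that for any $j\in S_1\cap A$ one may replace $x_{(2,j)}$ by $x_{(3,j)}$ in $D$ without increasing $|D|$, so a minimum $D$ may be assumed to satisfy $A\cap(S_1\cup S_3)=\emptyset$.
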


\begin{proof}
We write $\theta(G)=\min\{|A|+2\gamma(G-A): \, A\subseteq V\}$. Suppose that $D$ is a minimum os-dominating set for $\mathscr{P}_3(G)$. If we define $A:=\{j\in [n]: \, x_{(2,j)}\in D\}$, $B_1:=\{l\in [n]: \, x_{(1,l)}\in D \}$ and $B_3:=\{ q\in [n]: \, x_{(3,q)}\in D \}$, we claim that each of the sets $B_1$ and $B_3$ is a dominating set for $G-A$. Assume that $p \in [n]\setminus A$. Since $x_{(2,p)} \notin D$ and $D$ is an os-dominating set, there exist $x_{(1,i)}, x_{(3,j)}\in D$ such that $x_{(1,i)}<x_{(2,p)}<x_{(3,j)}$ holds in $\mathscr{P}_3(G)$. However, this means that $i\in B_1$, $j\in B_3$ and $ip, jp\in E$. It follows that $B_1$ and $B_3$ are dominating sets for $G-A$ as claimed. Furthermore, we may partition the set $D$ as $D=D_1 \cup D_2 \cup D_3$, where $D_s=D\cap \{x_{(s,t)}: \, t\in [n] \}$ for $s\in [3]$. Therefore we conclude that
$$\theta(G)\leq |A|+|B_1|+|B_3|\leq |D_2|+(|D_1|+|D_3|)=\gamma_{\os}(\mathscr{P}_3(D)).$$
Conversely, assume that $\theta(G)=|A|+2\gamma(G-A)$ for some subset $A\subseteq [n]$. Let $B\subseteq [n]\setminus A$ be a minimum dominating set for the graph $G-A$. We define a set
$$C:=\{x_{(1,i)}, x_{(2,j)}, x_{(3,i)}: \, i\in B \text{ and }j\in A \}\subseteq X(G)$$
and claim that $C$ is an os-dominating set for $\mathscr{P}_3(G)$. Let $x_{(p,q)}\in X(G) \setminus C$ be given. Assume first that $q\in A$. If $p\in \{1,3\}$, then the vertex $x_{(2,q)}\in C$ dominates both $x_{(1,q)}$ and $x_{(3,q)}$. Secondly, if $q\notin A$, there exists $h\in B$ such that $qh\in E$, since $B$ is a dominating set of $G-A$. It then follows that $x_{(1,h)},x_{(3,h)}\in C$ and $x_{(1,h)}<x_{(2,q)}<x_{(3,h)}$ in $\mathscr{P}_3(G)$. Finally, we conclude that
$$\gamma_{\os}(\mathscr{P}_3(G))\leq |C|=|A|+2|B|=\theta(G)$$
as claimed.
\end{proof}

\begin{theorem}\label{thm:os-dom-roman}
$\gamma_{\os}(\mathscr{P}_3(G))=\gamma_{\rom}(G)$ for every connected graph $G$ with order at least two.
\end{theorem}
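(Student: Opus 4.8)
The plan is to combine Lemma~\ref{lem:os-dom-3-graphs} with the classical description of the Roman domination number in terms of ordinary domination. By Lemma~\ref{lem:os-dom-3-graphs} we already know that
$$\gamma_{\os}(\mathscr{P}_3(G))=\min\{|A|+2\gamma(G-A)\colon A\subseteq V\},$$
so it suffices to prove the identity $\gamma_{\rom}(G)=\min\{|A|+2\gamma(G-A)\colon A\subseteq V\}$, and then invoke the lemma.

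For the inequality $\gamma_{\rom}(G)\le \min\{|A|+2\gamma(G-A)\colon A\subseteq V\}$, I would fix a subset $A\subseteq V$ attaining the minimum on the right, let $B\subseteq V\setminus A$ be a minimum dominating set of $G-A$, and define $f\colon V\to\{0,1,2\}$ by $f(v)=1$ for $v\in A$, $f(v)=2$ for $v\in B$, and $f(v)=0$ otherwise; note that $A\cap B=\varnothing$ since $B\subseteq V(G-A)$. Any vertex $v$ with $f(v)=0$ lies in $V(G-A)\setminus B$, hence has a neighbour $u\in B$ in $G-A$, and therefore in $G$, with $f(u)=2$. Thus $f$ is a Roman dominating function of $G$ of weight $|A|+2|B|=|A|+2\gamma(G-A)$, and the inequality follows.

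For the reverse inequality, I would start from a Roman dominating function $f$ of $G$ with $\we(f)=\gamma_{\rom}(G)$ and set $V_i=f^{-1}(i)$ for $i\in\{0,1,2\}$. By definition every vertex of $V_0$ has a neighbour in $V_2$; since $V_0$ and $V_2$ together form the vertex set of $G-V_1$, the set $V_2$ is a dominating set of $G-V_1$, so $\gamma(G-V_1)\le |V_2|$. Taking $A=V_1$ then gives $|A|+2\gamma(G-A)\le |V_1|+2|V_2|=\we(f)=\gamma_{\rom}(G)$, hence $\min\{|A|+2\gamma(G-A)\colon A\subseteq V\}\le \gamma_{\rom}(G)$. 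Combining the two inequalities with Lemma~\ref{lem:os-dom-3-graphs} yields the theorem.

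There is essentially no hard step here: the only points requiring care are that $B$ is genuinely disjoint from $A$ and that the degenerate case $A=V$ (where $G-A$ is empty, $\gamma(G-A)=0$ and $f\equiv 1$) is covered by the same formulas. I would also remark that the identity $\gamma_{\rom}(G)=\min_{A\subseteq V}\{|A|+2\gamma(G-A)\}$ is already implicit in the literature on Roman domination, so one could alternatively cite it directly; the hypotheses that $G$ be connected of order at least two play no role beyond the paper's standing conventions.
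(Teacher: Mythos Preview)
Your proof is correct and follows essentially the same route as the paper: both rely on Lemma~\ref{lem:os-dom-3-graphs} together with the identity $\gamma_{\rom}(G)=\min_{A\subseteq V}\{|A|+2\gamma(G-A)\}$. The only difference is organisational: the paper obtains the inequality $\gamma_{\os}(\mathscr{P}_3(G))\le\gamma_{\rom}(G)$ by citing the $2$-packing formula of Cockayne et~al.\ and then builds a Roman dominating function directly from a minimum os-dominating set for the reverse inequality, whereas you prove both halves of the Roman-domination identity by elementary means and then invoke the lemma. Your version is slightly more self-contained; the paper's is a bit shorter by outsourcing one direction to the literature.
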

\begin{proof}
The inequality $\gamma_{\os}(\mathscr{P}_3(G))\leq \gamma_{\rom}(G)$ simply follows from Lemma~\ref{lem:os-dom-3-graphs} together with the fact that $\gamma_{\rom}(G)=\min\{|S|+2\gamma(G-S): S\;\textnormal{is a}\;2\textnormal{-packing}\}$ by~\citep[Corollary $1$]{CDHH}.

For the converse, we follow the proof of Lemma~\ref{lem:os-dom-3-graphs}. Let $D$ be an os-dominating set for $\mathscr{P}_3(G)$, and let $A$, $B_1$ and $B_3$ be the subsets of $V(G)=[n]$ defined as in Lemma~\ref{lem:os-dom-3-graphs}. Assume without loss of generality that $|B_1|\leq |B_3|$. We then define a function $f_D\colon [n]\to \{0,1,2\}$ by
\begin{equation*}
f_D(i):=\begin{cases}
1,& \textnormal{if}\;i\in A,\\
2,& \textnormal{if}\;i\in B_1,\\
0,& \textnormal{otherwise},\\ 
\end{cases}
\end{equation*} 
and claim that $f_D$ is a Roman dominating function. Suppose that $j\in [n]$ is a vertex with $f_D(j)=0$. This means that $x_{(2,j)}\notin D$. So, there exists $t\in B_1$ so that $x_{(1,t)}<x_{(2,j)}$ in $\mathscr{P}_3(G)$. It then follows that $f_D(t)=2$ and $jt\in E$. This proves the claim. Now,
we conclude that
$$\we(f_D)=|A|+2|B_1|\leq |D|=\gamma_{\os}(\mathscr{P}_3(G)).$$
\end{proof}

\begin{example} For each $n\geq 3$, we have
$$\gamma_{\os}(\mathscr{P}_3(P_n))=\gamma_{\os}(\mathscr{P}_3(C_n))=\gamma_{\rom}(P_n)=\gamma_{\rom}(C_n)=\ceil*{\dfrac{2n}{3}}.$$
\end{example}

Taking Theorem~\ref{thm:os-dom-roman} into account, the following inequalities are known (see~\citep[Theorem $2.1$]{Hedetniemi-Walsh}). However, we choose to include its proof. 

\begin{theorem}\label{thm:3-os-dom-graphs-bounds}
$\gamma_t(G)\leq \gamma_{\os}(\mathscr{P}_3(G))\leq 2\gamma(G)$ for every graph $G$ without isolated vertices.
\end{theorem}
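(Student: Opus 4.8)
The plan is to prove the two inequalities separately, using the identity $\gamma_{\os}(\mathscr{P}_3(G))=\min\{|A|+2\gamma(G-A):A\subseteq V\}$ from Lemma~\ref{lem:os-dom-3-graphs} as the main handle.

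\textbf{Upper bound.} For $\gamma_{\os}(\mathscr{P}_3(G))\leq 2\gamma(G)$, I would simply take $A=\emptyset$ in the minimum defining $\gamma_{\os}(\mathscr{P}_3(G))$ via Lemma~\ref{lem:os-dom-3-graphs}, which gives $\gamma_{\os}(\mathscr{P}_3(G))\leq 0+2\gamma(G-\emptyset)=2\gamma(G)$. (Alternatively, this already follows from Theorem~\ref{thm:os-dom-roman} together with the well-known bound $\gamma_{\rom}(G)\leq 2\gamma(G)$, obtained by assigning the value $2$ to every vertex of a minimum dominating set; but the direct argument via Lemma~\ref{lem:os-dom-3-graphs} is cleaner and self-contained.)

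\textbf{Lower bound.} For $\gamma_t(G)\leq \gamma_{\os}(\mathscr{P}_3(G))$, let $A\subseteq V$ be a set achieving the minimum, so $\gamma_{\os}(\mathscr{P}_3(G))=|A|+2\gamma(G-A)$, and let $B$ be a minimum dominating set of $G-A$. The idea is to build a total dominating set $T$ of $G$ of size at most $|A|+2|B|$. Start with $T_0=A\cup B$. First, $A\cup B$ dominates all of $G$: vertices outside $A$ are dominated by $B$ within $G-A$. The obstruction to being a \emph{total} dominating set is that some vertices of $A\cup B$ may have no neighbour inside $A\cup B$. To fix this, for each vertex $v\in A\cup B$ that is not already totally dominated, add one neighbour of $v$ to $T$; since $G$ has no isolated vertex, such a neighbour exists. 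Here the key bookkeeping point, which is where I expect the main (mild) obstacle to lie, is the counting: we must show the total number of added vertices is at most $|B|$ (so that $|T|\le |A|+2|B|$). I would argue this by noting that each $v\in A$ needs at most one new neighbour but these can be charged against $B$ carefully — or, more robustly, I would instead directly construct $T$ as follows. Take $T:=B\cup B'$ where $B'$ is obtained by picking, for each $b\in B$ with no neighbour in $B$, one arbitrary neighbour $b'$ of $b$; this handles total domination of $B$ and uses $\le|B|$ extra vertices. Then augment by $A$ and, for each $a\in A$ not yet totally dominated, one neighbour; to keep the count, observe one may instead always route $a\in A$'s partner through $B\cup B'$ or note $a$ is dominated hence has a neighbour, and absorb these into a revised estimate. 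The honest version: I would show $\gamma_t(G)\le |A|+2\gamma(G-A)$ by taking $A$ together with a minimum \emph{total} dominating set of $G-A$ (padding $G-A$'s components with no total dominating set), and a minimum total dominating set of $G-A$ has size $\le 2\gamma(G-A)$ by the standard bound $\gamma_t(H)\le 2\gamma(H)$ for $H$ without isolated vertices; vertices of $A$ are dominated (by the original domination) and can be totally dominated by adding at most $|A|$ further vertices, which is absorbed by re-examining the minimum — so in fact I expect the clean statement to be $\gamma_t(G)\le \min\{|A|+\gamma_t(G-A)\}$ with appropriate care, giving the bound. The main obstacle is thus purely the combinatorial accounting to avoid an off-by-factor; I would resolve it by choosing the partner vertices for $A$ to lie in $A\cup B$ whenever possible and invoking $\gamma_t(G-A)\le 2\gamma(G-A)$.

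Concretely, the cleanest route I would write up: (i) upper bound by $A=\emptyset$ in Lemma~\ref{lem:os-dom-3-graphs}; (ii) lower bound: fix optimal $A$, let $B$ be a minimum dominating set of $G-A$, and for each $b\in B$ choose a neighbour $b^+\in V(G)$ (exists, no isolated vertices); set $T=A\cup B\cup\{b^+:b\in B\}\cup\{a^+:a\in A\}$ where $a^+$ is a neighbour of $a$ — then check $T$ is a total dominating set and bound $|T|$, observing redundancies force $|T|\le |A|+2|B|=\gamma_{\os}(\mathscr{P}_3(G))$, hence $\gamma_t(G)\le\gamma_{\os}(\mathscr{P}_3(G))$. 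I anticipate the write-up needs one careful paragraph verifying that every vertex of $G$—in particular every vertex of $A$ and of $B$—has a neighbour in $T$, and that the size estimate survives; this verification is the only nontrivial part.
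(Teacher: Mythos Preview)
Your upper bound is correct and matches the paper exactly: take $A=\emptyset$ in Lemma~\ref{lem:os-dom-3-graphs}.

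Your lower bound, however, has a genuine gap that you yourself partly sense. In every variant you sketch, you keep all of $A$ inside the candidate total dominating set and then try to \emph{add} a partner for each vertex of $A$ that is not yet totally dominated. This inevitably produces a set of size up to $2|A|+2|B|$, and the hoped-for ``redundancies'' that would bring this down to $|A|+2|B|$ simply need not exist: nothing prevents every vertex of $A$ from being isolated in $G[A]$ and having all its neighbours outside $B\cup\{b^+:b\in B\}$. Your Approach~3 via $\gamma_t(G-A)\le 2\gamma(G-A)$ also fails as stated, both because $G-A$ may have isolated vertices and because $A$ together with a total dominating set of $G-A$ is still not total for $G$ (vertices of $A$ may have no neighbour in it).

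The missing idea, which the paper uses, is to \emph{remove and replace} rather than augment. Build $S'$ from $B$ by adjoining one neighbour of each $b\in B$, so $|S'|\le 2|B|$. Now split $A$: let $A'\subseteq A$ be the set of vertices that are isolated in $G[A]$ \emph{and} have no neighbour in $S'$. Every $a\in A'$ has all its neighbours in $(V\setminus A)\setminus S'$; pick a minimum set $T'\subseteq V\setminus A$ dominating $A'$, so $|T'|\le |A'|$, and note each vertex of $T'$ (lying in $G-A$ and outside $S'$) has a neighbour in $S\subseteq S'$. Then
\[
M:=S'\cup T'\cup (A\setminus A')
\]
is a total dominating set of $G$ (vertices of $A'$ are dominated by $T'$; vertices of $A\setminus A'$ have a neighbour in $A\setminus A'$ or in $S'$; vertices of $G-A$ have a neighbour in $S'$), and $|M|\le 2|B|+|A'|+(|A|-|A'|)=|A|+2|B|$. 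The crucial move is that the ``bad'' vertices $A'$ are \emph{dropped} from $M$ and \emph{swapped} for $T'$, at no extra cost; your constructions never drop anything from $A$, which is why the accounting cannot close.
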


\begin{proof}
Let $D$ be a minimum os-dominating set for $\mathscr{P}_3(G)$. Then there exists a subset $A\subset V$ such that $|D|=|A|+2\gamma(G-A)$ . Let $S$ be a minimum dominating set for $G-A$ such that $|D|=|A|+2|S|$. We construct a dominating set $S'$ by adding to $S$, exactly one neighbor of each vertices of $S$. Clearly, we then have $|S'|\leq 2|S|$.

Let $A'$ be a set of isolated vertices of $G[A]$ such that $N(A')\cap S'=\varnothing$. Let $N(A')\cap (G-A)=T $. By the choice of $A'$, we have $T\cap S'=\varnothing$. We take a minimum subset of $T$, say $T'$, dominating $A'$. We obviously have $|T'|\leq |A'|$.
Since $T\cap S'=\varnothing$ and $S'$ is a dominating set for $G-A$, each vertex of $T'$ has a neighbor in $S'$. Therefore we obtain a total dominating set 
$$M=S'\cup T' \cup (A-A')$$
for $G$. It is clear that $|M|\leq |D|$. Thus we have
$$\gamma_t(G)\leq |M|\leq |D|=\gamma_{\os}(\mathscr{P}_3(G)).$$
The last inequality simply follows from Lemma~\ref{lem:os-dom-3-graphs} by taking $A=\emptyset$.
\end{proof}

Our final aim in this section is to consider the case $k=4$ for which we prove that the order-sensitive domination number can be directly computed from the domination number of the underlying graphs.

\begin{theorem}\label{thm:4-gammaos-gamma}
$\gamma_{\os}(\mathscr{P}_4(G))=2\gamma(G)$ for every graph $G$.
\end{theorem}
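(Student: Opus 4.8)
The plan is to prove the two inequalities $\gamma_{\os}(\mathscr{P}_4(G))\le 2\gamma(G)$ and $\gamma_{\os}(\mathscr{P}_4(G))\ge 2\gamma(G)$ separately, in the spirit of the proof of Lemma~\ref{lem:os-dom-3-graphs}, but now exploiting the extra layer. Recall $\mathscr{P}_4(G)$ has ground set $\{x_{(l,i)}: l\in[4],\, i\in[n]\}$ with $x_{(r,i)}\prec x_{(r+1,j)}$ iff $i=j$ or $ij\in E(G)$. Here $\Min=\{x_{(1,i)}\}$, $\Max=\{x_{(4,i)}\}$, and $\Mid$ consists of levels $2$ and $3$.

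For the upper bound, let $S$ be a minimum dominating set of $G$, so $|S|=\gamma(G)$. I would set
$$
C:=\{x_{(2,i)}: i\in S\}\cup\{x_{(3,i)}: i\in S\},
$$
which has size $2\gamma(G)$, and check it is an os-dominating set of $\mathscr{P}_4(G)$. For domination in $\comp(\mathscr{P}_4(G))$: any $x_{(1,j)}$ is dominated because $j$ has a neighbor (or equals a vertex) $i\in S$, hence $x_{(1,j)}\prec x_{(2,i)}\in C$; similarly $x_{(4,j)}$ is dominated by some $x_{(3,i)}\in C$; $x_{(2,j)}$ and $x_{(3,j)}$ with $j\notin S$ are dominated by $x_{(3,i)}$ resp. $x_{(2,i)}$ for a suitable $i\in S$ (using that $x_{(2,j)}\prec x_{(3,i)}$ iff $i=j$ or $ij\in E$). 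For the order-sensitive condition on the mid elements: if $x_{(2,j)}\notin C$, pick $i\in S$ with $i=j$ or $ij\in E$; then $x_{(1,i)}<x_{(2,j)}<x_{(3,i)}$ — wait, this needs $x_{(1,i)}\in C$, which it is not. So instead I use $x_{(2,i)}\in C$ and $x_{(3,i)}\in C$ with $x_{(2,i)}<x_{(3,j)}$... the cleanest fix: for $x_{(2,j)}\in\Mid\setminus C$, note $x_{(1,j)}<x_{(2,j)}$ is not available in $C$ either, so one should instead sandwich using level-$1$ and level-$3$ copies; therefore I would actually take $C:=\{x_{(1,i)},x_{(3,i)}:i\in S\}$ or $C:=\{x_{(2,i)},x_{(4,i)}:i\in S\}$ — I expect the correct choice to be symmetric, say $C=\{x_{(1,i)},x_{(4,i)}:i\in S\}\cup(\text{something})$; determining which two of the four levels to place the two copies of $S$ on so that every mid element gets a genuine sandwich $a<x<b$ with $a,b\in C$ is the one genuinely fiddly bookkeeping point, and I would settle it by observing that a mid element at level $2$ needs a lower element at level $1$ and an upper at level $3$ or $4$, while a mid element at level $3$ needs a lower at level $1$ or $2$ and an upper at level $4$; placing one copy of $S$ at level $1$ and one at level $4$ handles both, since $x_{(1,i)}<x_{(2,j)}<x_{(4,i)}$ and $x_{(1,i)}<x_{(3,j)}<x_{(4,i)}$ whenever $i=j$ or $ij\in E(G)$, and the same elements dominate all of levels $2,3$ and the minimal/maximal elements. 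So $C=\{x_{(1,i)}:i\in S\}\cup\{x_{(4,i)}:i\in S\}$ works and $\gamma_{\os}(\mathscr{P}_4(G))\le 2|S|=2\gamma(G)$.

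For the lower bound, let $D$ be a minimum os-dominating set of $\mathscr{P}_4(G)$ and write $D_s=D\cap\{x_{(s,t)}:t\in[n]\}$ with $B_s=\{t: x_{(s,t)}\in D_s\}$, so $|D|=\sum_{s=1}^4|B_s|$. The key claim to establish is that $B_1\cup B_2$ is a dominating set of $G$ and that $B_3\cup B_4$ is a dominating set of $G$. Indeed, take any $j\in[n]$. The element $x_{(2,j)}$ is a mid element, so either $x_{(2,j)}\in D$ (whence $j\in B_2$) or it is dominated in the comparability graph, which for a level-$2$ element means a comparable element at level $1$ or level $3$ lies in $D$; the order-sensitive condition is even stronger but domination already gives: there is $t$ with $x_{(1,t)}\in D$ and ($t=j$ or $tj\in E$), or $x_{(3,t)}\in D$ with ($t=j$ or $tj\in E$). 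In the first case $j$ is dominated (in the closed sense) by $B_1$; the third level case I route to $B_3\cup B_4$ via $x_{(3,j)}$. Carrying this out symmetrically for $x_{(3,j)}$ as well, I get that $B_1\cup B_2\supseteq$ a dominating set when combined with... — more carefully, I will show: for every $j$, either $j\in B_1\cup B_2$ or $j$ has a neighbor in $B_1\cup B_2$; this follows by examining how the mid element $x_{(2,j)}$ (if $j\notin B_2$) gets order-sensitively sandwiched or dominated from below. Symmetrically every $j$ is in or adjacent to $B_3\cup B_4$ using $x_{(3,j)}$. Hence $\gamma(G)\le |B_1|+|B_2|$ and $\gamma(G)\le |B_3|+|B_4|$, and adding, $2\gamma(G)\le |D|=\gamma_{\os}(\mathscr{P}_4(G))$.

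The main obstacle I anticipate is the lower bound argument: making the two "halves" $B_1\cup B_2$ and $B_3\cup B_4$ each dominate $G$ requires care because a single element of $D$ at, say, level $2$ could a priori be doing double duty, and one must check that the order-sensitive sandwiching condition (not merely comparability-graph domination) is what forces the needed neighbors into the correct half. Concretely, for $j\notin B_2$ the element $x_{(2,j)}$ satisfies $a<x_{(2,j)}<b$ for some $a,b\in D$; since $x_{(2,j)}$ is at level $2$, necessarily $a\in D_1$ and $b\in D_3\cup D_4$, giving $j$ a (closed) neighbor in $B_1$ via $a$; and for $j\notin B_3$ the element $x_{(3,j)}$ gives $j$ a neighbor in $B_4$ via its upper sandwich element. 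This is exactly why height $4$ yields the clean factor $2\gamma(G)$ rather than the Roman number: the sandwich is forced to straddle two disjoint level-pairs. Once this observation is pinned down the rest is routine.
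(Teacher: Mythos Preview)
Your argument is correct. After the exploratory detour in the upper-bound paragraph, you land on the right set $C=\{x_{(1,i)}:i\in S\}\cup\{x_{(4,i)}:i\in S\}$, and the sandwich $x_{(1,i)}<x_{(\ell,j)}<x_{(4,i)}$ for $\ell\in\{2,3\}$ whenever $i=j$ or $ij\in E(G)$ is exactly what is needed. The lower bound is also correct: for $j\notin B_2$ the order-sensitive condition on $x_{(2,j)}$ forces a strictly smaller element of $D$, and since the poset is graded this element lies in level~$1$; hence $j\in N_G[B_1]$, so $B_1\cup B_2$ dominates $G$. Symmetrically $B_3\cup B_4$ dominates $G$, and adding the two inequalities gives $2\gamma(G)\le |D|$. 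Your remark that this is precisely where height~$4$ beats height~$3$ --- the lower sandwich element at level~$2$ is forced into level~$1$, and the upper sandwich element at level~$3$ is forced into level~$4$, so the two halves of $D$ are decoupled --- is exactly the point.

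The paper, however, takes a genuinely different route. It factors the statement through two auxiliary results: first, for any bipartite graph $B$ (viewed as a height-two poset) one builds a height-four poset $B_4$ by stacking two copies of $B$, and shows $\gamma_{\os}(B_4)=\gamma_t(B)$ (Proposition~\ref{prop:b4-gamma-total}); second, for the extended double cover $B_e(G)$ one shows $\gamma_t(B_e(G))=2\gamma(G)$ (Proposition~\ref{prop:t-extdoub}). The theorem then follows from the isomorphism $(B_e(G))_4\cong\mathscr{P}_4(G)$. This detour buys two intermediate statements of independent interest --- in particular the identity $\gamma_{\os}(B_4)=\gamma_t(B)$ applies to \emph{every} bipartite graph, not just extended double covers --- at the cost of extra definitions. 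Your direct argument is shorter and more transparent for the specific goal of proving Theorem~\ref{thm:4-gammaos-gamma}, and in fact the level-splitting idea $|D|=(|B_1|+|B_2|)+(|B_3|+|B_4|)$ is implicitly what drives the paper's Proposition~\ref{prop:t-extdoub} as well. If you were writing this up, you should streamline the upper-bound paragraph to go straight to the correct choice of $C$ rather than recording the false starts.
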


In order to prove Theorem~\ref{thm:4-gammaos-gamma}, we initially introduce an operation that constructs a graded poset of height four from any given bipartite graph without any isolated vertex.

\begin{figure}[ht]
\centering     %%% not \center
\subfigure{
\begin{tikzpicture}[scale=0.7]
\node [nod3] at (0,2) (u1) [label=below:$a$]{};  %X
\node [nod3] at (2,2) (u2) [label=below:$b$]{};  %X
\node [nod3] at (4,2) (u3) [label=below:$c$]{};  %X
\node [nod3] at (6,2) (u4) [label=below:$d$]{};  %X
\node [nod3] at (1,4) (u5) [label=above:$u$]{}  %Y
          edge [] (u1)
          edge [] (u2);
\node [nod3] at (3,4) (u6) [label=above:$v$] {}  %Y
           edge [] (u1)
           edge [] (u2)
           edge [] (u3);
\node [nod3] at (5,4) (u7) [label=above:$w$] {}  %Y
           edge [] (u2)
           edge [] (u3)
           edge [] (u4);           
\node at (3,-.5) (u) [label=below:$B$] {};
\end{tikzpicture}   
}
\hspace*{1.5cm}
\subfigure{
\begin{tikzpicture}[scale=.7]
\node [nodel] at (0,0) (a1) [label=left:$a^1$]{};  %X
\node [nodel] at (2,0) (b1) [label=left:$b^1$]{};  %X
\node [nodel] at (4,0) (c1) [label=left:$c^1$]{};  %X
\node [nodel] at (6,0) (d1) [label=left:$d^1$]{};  %X
\node [nodel] at (0,4) (a3) [label=left:$a^3$]{};  %X3
\node [nodel] at (2,4) (b3) [label=left:$b^3$]{};  %X3
\node [nodel] at (4,4) (c3) [label=left:$c^3$]{};  %X3
\node [nodel] at (6,4) (d3) [label=left:$d^3$]{};  %X3
\node [nodel] at (1,2) (u2) [label=right:$u^2$]{}  %Y2
          edge [line width=0.5mm] (a1)
          edge [line width=0.5mm] (b1)
          edge [line width=0.5mm] (a3)
          edge [line width=0.5mm] (b3);
\node [nodel] at (3,2) (v2) [label=right:$v^2$] {}  %Y2
           edge [line width=0.5mm] (a1)
           edge [line width=0.5mm] (b1)
           edge [line width=0.5mm] (c1)
           edge [line width=0.5mm] (a3)
           edge [line width=0.5mm] (b3)
           edge [line width=0.5mm] (c3);
\node [nodel] at (5,2) (w2) [label=right:$w^2$] {}  %Y2
           edge [line width=0.5mm] (b1)
           edge [line width=0.5mm] (c1)
           edge [line width=0.5mm] (d1)      
           edge [line width=0.5mm] (b3)
           edge [line width=0.5mm] (c3)
           edge [line width=0.5mm] (d3);  
\node [nodel] at (1,6) (u4) [label=right:$u^4$]{}  %Y4
          edge [line width=0.5mm] (a3)
          edge [line width=0.5mm] (b3);
\node [nodel] at (3,6) (v4) [label=right:$v^4$] {}  %Y4
           edge [line width=0.5mm] (a3)
           edge [line width=0.5mm] (b3)
           edge [line width=0.5mm] (c3);
\node [nodel] at (5,6) (w4) [label=right:$w^4$] {}  %Y4
           edge [line width=0.5mm] (b3)
           edge [line width=0.5mm] (c3)
           edge [line width=0.5mm] (d3);  
 \node at (6.5,0) (u) [label=right:$L_1$] {};
 \node at (6.5,2) (u) [label=right:$L_2$] {};
 \node at (6.5,4) (u) [label=right:$L_3$] {};
 \node at (6.5,6) (u) [label=right:$L_4$] {};
\node at (3,-.5) (u) [label=below:$B_4$] {};
\end{tikzpicture}
}
\caption{A bipartite graph $B$ and the poset $B_4$.}
\label{fig:B4}
\end{figure}
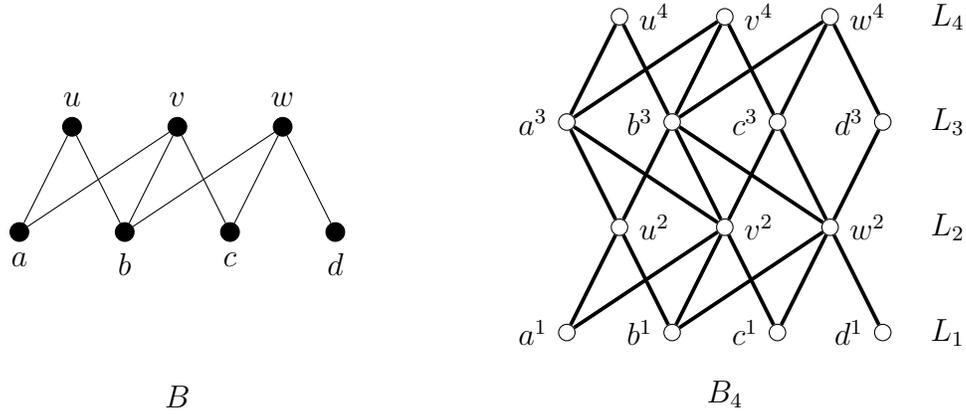

We may consider every bipartite graph $B=(X,Y;E)$ without any isolated vertex as a height two poset in which $\Max(B)=Y$ and $\Min(B)=X$. We then construct a graded height four poset $B_4$ as follows. If we 
denote the $i^{\textnormal{th}}$-layer of $B_4$ by $L_i$, then
$L_4:=\Max(B_4)=\{y^4\colon y\in Y\}$, $L_3:=\{x^3\colon x\in X\}$,
$L_2:=\{y^2\colon y\in Y\}$ and $L_1=\Min(B_4)=\{x^1\colon x\in X\}$, where covering relations are all inherited from $B$ itself. In other words, $x^i\prec y^{i+1}$ and $y^j\prec x^{j+1}$ if and only if $xy\in E$ for every pair $i\in \{1,3\}$ and $j=2$.

\begin{proposition}\label{prop:b4-gamma-total}
If $B$ is a bipartite graph, then $\gamma_{\os}(B_4)=\gamma_t(B)$.
\end{proposition}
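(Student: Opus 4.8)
The plan is to prove the equality by the two inequalities $\gamma_{\os}(B_4)\le\gamma_t(B)$ and $\gamma_t(B)\le\gamma_{\os}(B_4)$, in each case turning a witnessing set on one side into a witnessing set of no larger cardinality on the other. First I would record the structural facts about $B_4$ on which the whole argument rests: the four layers $L_1,L_2,L_3,L_4$ are antichains, $\Mid(B_4)=L_2\cup L_3$, every element lying below a member of $L_2$ belongs to $L_1$, and every element lying above a member of $L_3$ belongs to $L_4$; furthermore $x^1<y^2$, $y^2<x^3$ and $x^3<y^4$ hold exactly when $xy\in E(B)$, while $x^1<x_1^3$ (resp. $y^2<y_1^4$) holds iff $x$ and $x_1$ (resp. $y$ and $y_1$) have a common neighbour in $B$, and $x^1<y^4$ holds iff $\dist_B(x,y)\le 3$. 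I would also use the elementary observation that, $B$ being bipartite with parts $X$ and $Y$, a set $T=T_X\cup T_Y$ with $T_X\subseteq X$ and $T_Y\subseteq Y$ is a total dominating set of $B$ if and only if every vertex of $Y$ has a neighbour in $T_X$ and every vertex of $X$ has a neighbour in $T_Y$.

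For $\gamma_{\os}(B_4)\le\gamma_t(B)$ I would start from a total dominating set $T=T_X\cup T_Y$ of $B$ and set $D:=\{x^1:x\in T_X\}\cup\{y^4:y\in T_Y\}$, so that $|D|=|T|$, and then verify that $D$ is os-dominating. The members of $L_1\cup L_4$ not already in $D$ are dominated through the long edges joining $L_1$ to $L_4$: if $x\in X\setminus T_X$, totality of $T$ gives a neighbour $y_0\in T_Y$ of $x$, so that $\dist_B(x,y_0)=1$, whence $x^1<y_0^4$ and $x^1$ is dominated by $y_0^4\in D$; the case of $L_4$ is symmetric. Since $D$ avoids $L_2\cup L_3$ altogether, the order-sensitivity condition demands a lower and an upper witness in $D$ for every middle element. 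For $y^2$: a neighbour $x\in T_X$ of $y$ provides the lower witness $x^1<y^2$, and then a neighbour $y_1\in T_Y$ of that same $x$ (which exists by totality of $T$) makes $x$ a common neighbour of $y$ and $y_1$, so that $y^2<y_1^4\in D$ is the upper witness; the case of $x^3$ is the mirror image. I expect this direction to be the main obstacle: pushing $D$ into the extreme layers leaves every middle element undominated "for free", so one must produce genuine witnesses, and it is precisely the two-step passage to $y_1$ (resp. $x_1$) that uses the totality of $T$ rather than mere domination.

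For $\gamma_t(B)\le\gamma_{\os}(B_4)$ I would take an arbitrary os-dominating set $D$ of $B_4$ and let $D_1,D_3\subseteq X$ and $D_2,D_4\subseteq Y$ be the projections to $V(B)$ of $D\cap L_1$, $D\cap L_3$, $D\cap L_2$ and $D\cap L_4$. The crude projection $D_1\cup D_2\cup D_3\cup D_4$ need not be total dominating (the star $K_{1,3}$ already shows this), so instead, using that $B$ has no isolated vertex, I would pick a neighbour $\phi(y)\in N_B(y)$ in $X$ for each $y\in D_2$ and a neighbour $\psi(x)\in N_B(x)$ in $Y$ for each $x\in D_3$, and put $T_X:=D_1\cup\{\phi(y):y\in D_2\}$, $T_Y:=D_4\cup\{\psi(x):x\in D_3\}$ and $T:=T_X\cup T_Y$; then $|T|\le|D_1|+|D_2|+|D_3|+|D_4|=|D|$. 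To see that $T$ is total dominating I would check the two covering conditions. Every $y\in Y$ has a neighbour in $T_X$: if $y\in D_2$ take $\phi(y)$, and if $y\notin D_2$ then $y^2\in\Mid(B_4)\setminus D$ has a lower witness in $D$, which must lie in $L_1$ and is therefore some $x^1$ with $x\in D_1$ and $xy\in E(B)$. That every $x\in X$ has a neighbour in $T_Y$ is the mirror statement, using that the upper witness of any $x^3\notin D$ is forced into $L_4$. Combining the two inequalities gives $\gamma_{\os}(B_4)=\gamma_t(B)$; apart from the routine order computations in $\comp(B_4)$ gathered at the outset, the only substantive points are the two-step witness in the first inequality and the unit-cost absorption of $D_2$ and $D_3$ into $T$ via $\phi$ and $\psi$ in the second.
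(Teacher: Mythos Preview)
Your proof is correct and follows essentially the same strategy as the paper: lift a total dominating set of $B$ into $L_1\cup L_4$ for one inequality, and for the other push the middle-layer contributions of an os-dominating set outward to $L_1$ and $L_4$ via chosen neighbours before projecting to $V(B)$. Your write-up is in fact more careful than the paper's, which dispatches the first direction with ``clearly'' and justifies the second via an intermediate os-dominating set $D'$, whereas you verify the order-sensitive witnesses explicitly and argue total domination of $T$ directly.
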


\begin{proof}
If $D$ is a total dominating set for $B$, then the set
$$D_4=\{y^4\colon \; y\in D\cap Y\}\cup \{x^1\colon \; x\in D\cap X\}$$
is clearly an os-dominating set for $B_4$.

For the converse, let $D$ be a minimum os-dominating set for $B_4$. We define $S_1\subset L_4$ (resp., $S_2\subset L_1)$ as the set of vertices containing exactly one neighbor of each vertex of $D\cap L_3$ (resp. $D\cap L_2$). Then the set
$$D'=(D\backslash(L_2\cup L_3))\cup (S_1\cup S_2)$$
is an os-dominating set for $B_4$, since each vertex of $L_3$ (resp. $L_2$) has a neighbor in $L_4$ (resp. $L_1$). Clearly we have $|D'|\leq |D|$. Now let $D''$ be the subset of $V(B)$ corresponding to the vertices in $D'$ in $B_4$. Clearly, $D''$ is a total dominating set for $B$, since $D''\cap Y$ (resp. $D''\cap  X$) dominates all the vertices of $X$ (resp. $Y$). Therefore we have
$$\gamma_t(B)\leq |D''|\leq |D'| \leq |D|=\gamma_{\os}(B_4).$$
\end{proof}

\begin{proposition}\label{prop:t-extdoub}
$\gamma_t(B_e(G))=2\gamma(G)$ for every graph $G$.
\end{proposition}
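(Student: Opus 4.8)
The plan is to prove the two inequalities $\gamma_t(B_e(G))\le 2\gamma(G)$ and $2\gamma(G)\le\gamma_t(B_e(G))$ separately, exploiting the symmetric structure of the extended double cover. Recall that $B_e(G)$ has vertex set $X\cup Y$ with $X=\{x_1,\dots,x_n\}$, $Y=\{y_1,\dots,y_n\}$, and $x_iy_j\in E(B_e(G))$ iff $i=j$ or $u_iu_j\in E(G)$; in particular $N_{B_e(G)}(x_i)=\{y_j: j=i \text{ or } u_iu_j\in E(G)\}$, which is exactly the mirror copy (in $Y$) of the closed neighborhood $N_G[u_i]$, and symmetrically for $y_i$. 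This "closed-neighborhood" description is the only fact I will use about $B_e(G)$.

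For the upper bound, I would start from a minimum dominating set $S$ of $G$ with $|S|=\gamma(G)$ and set $D:=\{x_i: u_i\in S\}\cup\{y_i: u_i\in S\}$, so $|D|=2\gamma(G)$. To see $D$ is a total dominating set of $B_e(G)$: any vertex $x_j\in X$ has some $u_i\in S$ with $u_i\in N_G[u_j]$ (as $S$ dominates $G$), hence $y_i\in D$ and $x_jy_i\in E(B_e(G))$; symmetrically every $y_j$ has a neighbor $x_i\in D$. So every vertex of $B_e(G)$ has a neighbor in $D$, giving $\gamma_t(B_e(G))\le 2\gamma(G)$.

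For the lower bound, let $D$ be a minimum total dominating set of $B_e(G)$ and write $D_X=D\cap X$, $D_Y=D\cap Y$, with index sets $I_X=\{i:x_i\in D_X\}$ and $I_Y=\{i:y_i\in D_Y\}$. The key observation is that $I_Y$ is a dominating set of $G$: every $x_j\in X$ must have a neighbor in $D$, necessarily in $D_Y$ since $B_e(G)$ is bipartite, so there is $i\in I_Y$ with $i=j$ or $u_iu_j\in E(G)$, i.e. $u_i\in N_G[u_j]$; thus $\{u_i:i\in I_Y\}$ dominates $G$ and $|I_Y|\ge\gamma(G)$. Symmetrically $|I_X|\ge\gamma(G)$, and since $D_X$ and $D_Y$ are disjoint, $|D|=|I_X|+|I_Y|\ge 2\gamma(G)$. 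Combining the two bounds yields the claim.

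I do not anticipate a genuine obstacle here; the statement is essentially the "folklore" fact that total domination in $B_e(G)$ decouples into two independent ordinary-domination problems on $G$, one per side. The only point requiring a little care is making sure the bipartiteness of $B_e(G)$ is invoked correctly in the lower bound (so that a vertex of $X$ is dominated only from $Y$), and that the self-loop-like clause $i=j$ in the adjacency definition is what supplies the "closed" neighborhood, which is exactly what makes ordinary domination — rather than total domination — of $G$ appear on each side.
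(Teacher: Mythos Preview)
Your proof is correct and follows essentially the same approach as the paper: construct a total dominating set of $B_e(G)$ by duplicating a minimum dominating set of $G$ on both sides for the upper bound, and for the lower bound observe that the $X$-part and $Y$-part of any total dominating set of $B_e(G)$ each project to a dominating set of $G$. Your write-up is in fact slightly more detailed than the paper's in justifying the two directions.
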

\begin{proof}
Let $D$ be a dominating set for $G$, where $V=V(G)=\{v_1,\ldots,v_n\}$.
If we define
\begin{equation*}
D^*:=\{x_i\colon v_i\in D\}\cup \{y_j\colon v_j\in D\},
\end{equation*}
then the set $D^*$ is a total dominating set for $B_e(G)$ of size $2|D|$.

Assume next that $F\subseteq X\cup Y$ is a total dominating set for $B_e(G)$. Observe that each of the sets
\begin{equation*}
F_1:=\{v_i\colon x_i\in F\cap X\} \quad \textnormal{and}\quad F_2:=\{v_j\colon y_j\in F\cap Y\}
\end{equation*}
are dominating sets for $G$. Hence, we have
\begin{equation*}
2\gamma(G)\leq |F_1|+|F_2|=|F|=\gamma_t(B_e(G)).
\end{equation*}

\end{proof}

\begin{proof}[{\bf Proof of Theorem~\ref{thm:4-gammaos-gamma}}]
Since $(B_e(G))_4\cong \mathscr{P}_4(G)$,  the claim follows from Propositions~\ref{prop:b4-gamma-total} and~\ref{prop:t-extdoub}.
\end{proof}

%%%%%%%%%%%%%%%%%%%%%%%%%%%%%%%%%%%%%%%%%%%%%%%%%%%%%%%%%%%%%%%
\section{Middle graphs and Posets with Helly property}\label{sec: middle}

In this section, we introduce middle graphs of posets and prove that under some restrictions, the order-sensitive domination numbers can be detected from these graphs.
\begin{definition}
Let $P\in \mathcal{P}_3(k)$ be a poset for some $k\geq 3$. The graph $\midd(P)$ defined on the set $\Mid(P)$ by $ab\in E(\midd(P))$ if and only if $CU_P(\{a,b\})\neq \emptyset$ and $CD_P(\{a,b\})\neq \emptyset$ is called the \emph{middle graph} of $P$.
\end{definition}

\begin{observation}\label{obser:red-mid}
If $P$ is a poset in $\mathcal{P}_3(k)$ with $k\geq 4$, then $\midd(P)$ is isomorphic to $\midd(\red(P))$.
\end{observation}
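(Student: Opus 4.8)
The plan is to show directly that the middle graphs of $P$ and $\red(P)$ are equal as graphs on the vertex set $\Mid(P)$, not merely isomorphic. First I would recall that by the definition of $\red(P)$, the two posets share the ground set $X$, and $\red(P)$ is obtained from $P$ by deleting exactly the comparabilities between pairs of elements of $\Mid(P)$ while keeping every other relation. Consequently $\Max(P)=\Max(\red(P))$, $\Min(P)=\Min(\red(P))$, and hence $\Mid(P)=\Mid(\red(P))$; moreover for any $a\in\Mid(P)$ and any $y\in\Max(P)$ we have $a<_P y$ if and only if $a<_{\red(P)} y$, and similarly with minimal elements below $a$. So the comparabilities destroyed by the reduction are precisely those internal to $\Mid(P)$.

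The key observation is that the middle-graph adjacency only ever looks at common upper bounds and common lower bounds of a pair $a,b\in\Mid(P)$, and any such strict upper bound can be replaced by a maximal one (and any strict lower bound by a minimal one) without changing whether the relevant set is empty. Concretely, $CU_P(\{a,b\})\neq\emptyset$ if and only if there is a common upper bound of $a$ and $b$ that lies in $\Max(P)$: given any $z\in CU_P(\{a,b\})$, pick $z'\in\Max(P)$ with $z\le_P z'$, and then $a<_P z'$, $b<_P z'$ by transitivity. The same argument with $\Min(P)$ handles $CD_P$. Therefore $ab\in E(\midd(P))$ if and only if $a$ and $b$ have a common upper bound in $\Max(P)$ and a common lower bound in $\Min(P)$ — a condition that refers only to comparabilities between $\Mid(P)$ and $\Max(P)\cup\Min(P)$, which are exactly the comparabilities preserved by passing from $P$ to $\red(P)$.

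Putting these together: for $a,b\in\Mid(P)$, the condition "$a,b$ have a common upper bound in $\Max(P)$ and a common lower bound in $\Min(P)$" holds in $P$ exactly when it holds in $\red(P)$, since the individual relations $a<y$ ($y\in\Max$) and $x<a$ ($x\in\Min$) are the same in both posets. Running the equivalence of the previous paragraph backwards in $\red(P)$ — where we must check that maximal/minimal elements of $\red(P)$ still serve as the witnesses, which they do since $\Max$ and $\Min$ are unchanged — we conclude $ab\in E(\midd(P))\iff ab\in E(\midd(\red(P)))$. Hence $\midd(P)=\midd(\red(P))$, which in particular gives the claimed isomorphism.

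I do not anticipate a genuine obstacle here; the statement is essentially bookkeeping once one notices the "push bounds to extremal elements" trick. The only point requiring a little care is the direction showing that a common bound in $\red(P)$ can itself be taken extremal: one has to use that $\Max(\red(P))=\Max(P)$ and $\Min(\red(P))=\Min(P)$, and that height three of $\red(P)$ guarantees every element of $\Mid(\red(P))$ indeed sits strictly between a minimal and a maximal element, so the witnesses are genuinely in the right layers. Beyond that the proof is a short chain of equivalences.
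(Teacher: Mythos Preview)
Your argument is correct and is precisely the reasoning the paper has in mind: in the paper this statement is labeled an \emph{Observation} and is stated without proof, so your write-up simply fills in the routine verification (push common bounds to $\Max(P)$ or $\Min(P)$, note that exactly those comparabilities survive in $\red(P)$). There is nothing to add or compare.
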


Let $\FE$ be a family of subsets of a ground set $X$. A subfamily $\FE'\subseteq \FE$ is called \emph{intersecting} if the intersection of every pair of sets in $\FE'$ is non-empty. The family $\FE$ is said to have the \emph{Helly property} if $\bigcap \FE'\neq \emptyset$ for every intersecting subfamily $\FE'\subseteq \FE$.

Now, let $P=(X,\leq)$ be a poset. We consider two families $\UE_P:=\{U_P(x)\colon x\in \Mid(P)\}$ and $\DE_P:=\{D_P(x)\colon x\in \Mid(P)\}$.

\begin{definition}
A poset $P\in \PE_3(k)$  for some $k\geq 3$ is called a \emph{Helly poset} if both families $\UE_P$ and $\DE_P$ have the Helly property.
\end{definition}

In order to justify our generalization, consider a family $\FE$ of non-empty subsets of a ground set such that $\bigcup_{F\in \FE}F=X$. We duplicate the ground set as $X':=\{x'\colon x\in X\}$, $X'':=\{x''\colon x\in X\}$, and then define a graded height three poset $P_3(\FE)$ by $\Max(P_3(\FE))=X''$, $\Mid(P_3(\FE))=\FE$ and $\Min(P_3(\FE))=X'$ such that $x'<F<y''$ if and only if $x,y\in F$.

\begin{observation}
The family $\FE$ has the Helly property if and only if $P_3(\FE)$ is a Helly poset.
\end{observation}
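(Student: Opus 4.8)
The plan is to unravel both directions directly from the definitions, observing that the poset $P_3(\FE)$ is essentially just a relabeled copy of the incidence structure of $\FE$, so that Helly-type statements about $\FE$ translate verbatim into Helly-type statements about the up-sets and down-sets of the mid-elements of $P_3(\FE)$. First I would record the explicit description of the relevant order ideals: for $F\in \Mid(P_3(\FE))=\FE$, by construction $U_{P_3(\FE)}(F)=\{y''\colon y\in F\}$ and $D_{P_3(\FE)}(F)=\{x'\colon x\in F\}$, since $x'<F<y''$ holds exactly when $x,y\in F$. Thus the map $F\mapsto U_{P_3(\FE)}(F)$ is, up to the bijection $y\leftrightarrow y''$ between $X$ and $X''$, the same as the identity family $\FE\mapsto \FE$; likewise for down-sets via $x\leftrightarrow x'$. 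In particular $\UE_{P_3(\FE)}$ and $\DE_{P_3(\FE)}$ are each isomorphic (as set systems) to $\FE$ itself.

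Next I would run the two implications. For the forward direction, suppose $\FE$ has the Helly property. Take any intersecting subfamily $\{U_{P_3(\FE)}(F)\colon F\in \FE'\}$ of $\UE_{P_3(\FE)}$; under the bijection above this corresponds to the subfamily $\FE'\subseteq \FE$, and $U_{P_3(\FE)}(F_1)\cap U_{P_3(\FE)}(F_2)=\{y''\colon y\in F_1\cap F_2\}$ is nonempty precisely when $F_1\cap F_2\neq\emptyset$, so $\FE'$ is intersecting, hence $\bigcap\FE'\neq\emptyset$ by hypothesis, hence $\bigcap_{F\in\FE'}U_{P_3(\FE)}(F)=\{y''\colon y\in\bigcap\FE'\}\neq\emptyset$. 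The same argument with $D_{P_3(\FE)}$ and the bijection $x\leftrightarrow x'$ handles $\DE_{P_3(\FE)}$, so $P_3(\FE)$ is a Helly poset. Conversely, if $P_3(\FE)$ is a Helly poset and $\FE'\subseteq\FE$ is intersecting, then $\{U_{P_3(\FE)}(F)\colon F\in\FE'\}$ is an intersecting subfamily of $\UE_{P_3(\FE)}$ by the same translation, so its total intersection is nonempty, which under the bijection gives $\bigcap\FE'\neq\emptyset$; thus $\FE$ has the Helly property.

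The only mild subtlety worth stating carefully is that $P_3(\FE)$ genuinely lies in $\PE_3(k)$ for some $k\geq 3$, i.e. that $\Mid(P_3(\FE))=\FE$ (so that the notion of Helly poset applies to it): every element of $X''$ is maximal and every element of $X'$ is minimal, while each $F\in\FE$ is nonempty, so it sits strictly above some $x'$ and strictly below some $y''$, hence is neither maximal nor minimal and belongs to a chain $x'<F<y''$ of size three; also $\bigcup_{F\in\FE}F=X$ guarantees every $x'$ and $y''$ lies in such a chain, so $P_3(\FE)\in\PE_3(3)$. I do not expect any real obstacle here — the statement is essentially a bookkeeping translation — the one thing to be attentive to is keeping the two bijections $X\to X''$ and $X\to X'$ straight and noting that $U_P(F)$ and $D_P(F)$ use the strict order $<$, which is consistent with the defining relation $x'<F<y''$.
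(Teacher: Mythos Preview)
Your proof is correct. The paper states this result as an Observation without proof, so there is no argument to compare against; your direct verification from the definitions is exactly the intended justification, and you have handled the bookkeeping (the bijections $X\to X'$ and $X\to X''$, the identification of $U_{P_3(\FE)}(F)$ and $D_{P_3(\FE)}(F)$, and the check that $P_3(\FE)\in\PE(3)$) cleanly.
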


For a given poset $P\in \PE_3(k)$ with $k\geq3$, we define two graphs by
\begin{equation*}
H_{u}(P):=\comp(P)[\Mid(P)\cup \Max(P)]\quad \textnormal{and}\quad H_{d}(P):=\comp(P)[\Min(P)\cup \Mid(P)].
\end{equation*}

\begin{proposition}
A poset $P\in \PE_3(k)$ with $k\geq3$ is a Helly poset if the graphs $H_{u}(P)$ and $H_{d}(P)$ are $C_6$-free.
\end{proposition}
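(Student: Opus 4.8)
The plan is to prove the contrapositive-style statement directly: assuming $H_u(P)$ and $H_d(P)$ are $C_6$-free, I will show that both families $\UE_P$ and $\DE_P$ have the Helly property. By symmetry (the dual poset $P^{\partial}$ swaps the roles of $\UE_P$ and $\DE_P$ as well as $H_u(P)$ and $H_d(P)$), it suffices to handle $\UE_P$. So let $\FE' = \{U_P(a_1), \dots, U_P(a_t)\} \subseteq \UE_P$ be an intersecting subfamily; I must produce an element of $X$ lying above every $a_i$, i.e.\ an element of $\bigcap_i U_P(a_i) = CU_P(\{a_1,\dots,a_t\})$.

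First I would dispose of the case $t \le 2$: for $t=2$, intersecting means $U_P(a_1) \cap U_P(a_2) \neq \emptyset$, which is exactly the nonempty common up-set, so there is nothing to prove. The real content is the inductive step, and the natural Helly-type strategy is to show that if every pair among three sets intersects then all three intersect, and then bootstrap to arbitrary $t$ by a standard induction (replace $U_P(a_{t-1}), U_P(a_t)$ by... — actually the cleanest route is a Radon/Helly-number-$2$ argument: I will show the family has Helly number $2$, meaning pairwise intersection forces global intersection, by arguing that any minimal intersecting subfamily violating global intersection must have size exactly $3$ and then deriving the forbidden $C_6$).

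Concretely, suppose $\{U_P(a), U_P(b), U_P(c)\}$ is pairwise intersecting but $CU_P(\{a,b,c\}) = \emptyset$. Then there exist $p \in U_P(a)\cap U_P(b)$, $q \in U_P(b)\cap U_P(c)$, $r \in U_P(a)\cap U_P(c)$, and the assumption $CU_P(\{a,b,c\}) = \emptyset$ forces $a,b,c$ pairwise incomparable (if, say, $a < b$, then $p \in U_P(b) \subseteq$ would need checking, but more to the point $U_P(a) \supseteq U_P(b)$ once $a<b$... — I would instead observe directly that if two of them are comparable the three-set problem collapses to the two-set case which already intersects) and also forces $p,q,r$ to be distinct and each incomparable to the "opposite" element ($p \notin U_P(c)$, $q\notin U_P(a)$, $r \notin U_P(b)$), and hence $p,q,r$ pairwise incomparable as well (e.g.\ $p < q$ together with $q \in U_P(c)$ would give $p \in U_P(c)$, contradiction; the other direction $q<p$ similarly; one must also rule out $p=q$ etc., which the incomparabilities just established do). The six elements $a,b,c,p,q,r$ all lie in $\Mid(P) \cup \Max(P)$ (the $a_i$ are in $\Mid(P)$ by definition of $\UE_P$, and the witnesses lie in $U_P$ of a middle element, hence are in $\Mid(P)\cup\Max(P)$), and the comparabilities $a<p$, $b<p$, $b<q$, $c<q$, $c<r$, $a<r$ give edges in $\comp(P)$, while the six incomparabilities I established ($a\parallel b$, $b \parallel c$, $a\parallel c$, $p\parallel q$, $q \parallel r$, $p\parallel r$, $a \parallel q$, $b\parallel r$, $c \parallel p$) show that $\{a, p, b, q, c, r\}$ induces exactly a $6$-cycle $a$–$p$–$b$–$q$–$c$–$r$–$a$ in $H_u(P) = \comp(P)[\Mid(P)\cup\Max(P)]$, contradicting $C_6$-freeness.

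The main obstacle I anticipate is not the $C_6$ extraction itself but the reduction from arbitrary $t$ to the three-set case, i.e.\ proving the family has Helly number $2$ rather than just "$3$-wise intersection implies global intersection." The standard trick is: among all counterexamples take one with $t$ minimal; minimality gives that $\bigcap_{i \neq j} U_P(a_i) \neq \emptyset$ for each $j$; then pick witnesses $w_j \in \bigcap_{i\neq j} U_P(a_i) \setminus U_P(a_j)$ and argue these witnesses, together with enough of the $a_i$, again force a forbidden configuration — but this requires care because for large $t$ one gets a large incomparability structure, not obviously a $C_6$. I would resolve this by instead proving the sharper lemma that $\UE_P$ is closed under the relevant "up-set" structure so that Helly number $2$ follows: specifically, each $U_P(a_i)$ is an up-set (order filter) of $P$, and a finite intersection of order filters is again an order filter; an order filter of a poset is nonempty iff it contains a maximal element of $P$; so it suffices to show that if the filters pairwise share a maximal element then they all share one — and here I can localize to $\Max(P)$ together with the $a_i$'s. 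If even this is awkward, the fallback is the clean classical fact that a family of sublattices (or of filters in a poset) with Helly number $2$ is equivalent to the Helly property, reducing everything to the $t=3$ case handled above; I would cite or quickly reprove that the filters $U_P(a)$ restricted to their maximal elements behave like a "conformal" hypergraph whose $2$-section determines intersections, making the $t=3$ argument suffice.
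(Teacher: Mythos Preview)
Your $t=3$ extraction is essentially correct, with one slip: from $p<q$ you cannot deduce $p\in U_P(c)$ (that would need $c<p$, which does not follow from $p<q$ and $c<q$). The right argument is that $p<q$ together with $a<p$ gives $a<q$, whence $q\in U_P(a)\cap U_P(b)\cap U_P(c)=\emptyset$; the case $q<p$ is symmetric. With this fix, the six elements $a,p,b,q,c,r$ do induce a $C_6$ in $H_u(P)$.

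The real gap is the passage from $t=3$ to arbitrary $t$. Your fallback suggestions do not close it: restricting to $\Max(P)$ merely restates the Helly question for arbitrary finite set systems, where it fails; the slogan ``filters have Helly number $2$'' is valid in lattices but not in general posets; and the conformal-hypergraph remark is not a proof. But the approach you raise and then abandon is precisely the one that works. Take a minimal counterexample $\{a_1,\dots,a_t\}$, so each $\bigcap_{i\neq j}U_P(a_i)$ is nonempty; choose $w_j$ there with $w_j\notin U_P(a_j)$. The $a_i$ are pairwise incomparable (else the family shrinks), and so are the $w_j$ (if $w_j<w_k$ then $a_k<w_j<w_k$ forces $w_k\in U_P(a_k)$, a contradiction). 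Moreover $a_i$ and $w_j$ are comparable iff $i\neq j$. Hence for \emph{any} three indices the six elements $a_1,w_3,a_2,w_1,a_3,w_2$ induce a $C_6$ in $H_u(P)$ --- the very same configuration as in your $t=3$ case. So the ``large incomparability structure'' you feared never materializes: three of the $a_i$ and three of the $w_j$ always suffice, and no separate reduction lemma is needed.

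The paper's own proof follows the same skeleton (minimal $S$, antichain checks, extract a $C_6$) but organizes the witnesses differently: it takes a minimal set $T_S$ of \emph{pairwise} upper bounds and then performs a short case analysis on two elements $p,q\in T_S$ and the pairs they uniquely certify. Your $(t{-}1)$-fold witnesses $w_j$ yield a more uniform argument once you commit to them.
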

\begin{proof}
We only prove the claim for $H_u=H_u(P)$, and note that a similar argument applies to the graph $H_d$. Let $\UE_P(S)=\{U_P(x)\colon x\in S\}$ be a pair-wise intersecting family for some $S\subseteq \Mid(P)$ (with $|S|\geq 3$) such that $CU_P(S)= \emptyset$, and let $S$ be minimal with this property. Observe that the set $S$ must be an antichain. In fact, if $x<y$ in $P$ for some $x,y\in S$, then $CU_P(S\setminus \{x\})= \emptyset$, which is not possible by the minimality of $S$. Let $T_S\subseteq \Mid(P)\cup \Max(P)$ be a minimal subset such that for every pair $x,y\in S$, there exists $q\in T_S$ satisfying $x,y<q$ in $P$. We claim that $T_S$ is also an antichain. Indeed, if $a<b$ for some $a,b\in T_S$, then for every pair $x,y\in S$, there exists $q\in T_S\setminus \{a\}$ satisfying $x,y<q$ in $P$, which contradicts the minimality of $T_S$. The minimality of $T_S$ further implies that for each $p\in T_S$, there exists a unique pair $x_p,y_p\in S$ such that $x_p,y_p<p$ holds. Observe that if $p,q\in T_S$ with $p\neq q$,
then $|\{x_p,y_p,x_q,y_q\}|\geq 3$. So, consider a $3$-element set $\{x_p,y_p,x_q\}$. If $x_p<q$, then $y_p\nless q$ and there exists $z\in T_S\setminus \{p,q\}$ such that $y_p,x_q<z$ in $P$. However, the minimality of $T_S$ forces that $x_p\nless z$. It then follows that $\{x_p,p,y_p,z,x_q,q\}$ induces a $C_6$ in $H_u$, a contradiction. 
We may therefore assume that $x_p\nless q$ in $P$. A similar reasoning implies that $y_p\nless q$. However, since $\UE(S)$ is pair-wise intersecting, there exist $z_1,z_2\in T_S\setminus \{p,q\}$ such that
$x_p,x_q<z_1$ and $y_p,x_q<z_2$. In such a case, we then conclude that $\{x_p,z_1,x_q,z_2,y_p,p\}$ induces a $C_6$ in $H_u$, a contradiction.
\end{proof}

\begin{corollary}\label{cor:wcp-ekr}
If $\comp(P)$ is a $C_6$-free graph for some poset $P\in \PE_3(k)$ with $k\geq3$, then $P$ is a Helly poset.
\end{corollary}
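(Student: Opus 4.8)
The plan is to deduce this immediately from the preceding Proposition, which asserts that $P$ is a Helly poset whenever both $H_u(P)$ and $H_d(P)$ are $C_6$-free. So the only thing to check is that the hypothesis ``$\comp(P)$ is $C_6$-free'' forces $H_u(P)$ and $H_d(P)$ to be $C_6$-free as well.

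First I would recall that, by definition, $H_u(P)=\comp(P)[\Mid(P)\cup \Max(P)]$ and $H_d(P)=\comp(P)[\Min(P)\cup\Mid(P)]$ are both \emph{induced} subgraphs of $\comp(P)$. The key (and essentially only) observation is that the class of $C_6$-free graphs is closed under taking induced subgraphs: if $H=\comp(P)[W]$ contained an induced cycle on six vertices $w_1w_2\cdots w_6w_1$, then those same six vertices, together with the adjacencies they inherit from $\comp(P)$, would already form an induced $C_6$ in $\comp(P)$ (since $W$ is a subset of the vertex set and $H$ is the induced subgraph, no edges among $\{w_1,\ldots,w_6\}$ are added or removed). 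This contradicts the hypothesis, so both $H_u(P)$ and $H_d(P)$ are $C_6$-free.

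With that in hand, the previous Proposition applies verbatim and yields that $P$ is a Helly poset, completing the argument. There is no real obstacle here: the statement is a direct corollary, and the only point worth spelling out is the trivial hereditary-under-induced-subgraphs property of $C_6$-freeness used in the reduction to $H_u(P)$ and $H_d(P)$.
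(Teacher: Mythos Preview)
Your proposal is correct and is exactly the intended argument: the paper states this as a corollary with no proof, since $H_u(P)$ and $H_d(P)$ are induced subgraphs of $\comp(P)$ and $C_6$-freeness is hereditary, so the preceding Proposition applies immediately.
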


\begin{proposition}
If $P\in \PE_3(k)$ is a Helly poset for some $k\geq 3$, then $$\gamma_{\os}(P)\leq 2\chi(\overline{\midd(P)}).$$
\end{proposition}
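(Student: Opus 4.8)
The plan is to reinterpret $\chi(\overline{\midd(P)})$ as a clique-cover quantity of $\midd(P)$ and then hand-build an os-dominating set of size at most twice that quantity. A proper $t$-colouring of $\overline{\midd(P)}$ is exactly a partition of $\Mid(P)$ into $t$ sets each of which is a clique of $\midd(P)$; so, writing $t=\chi(\overline{\midd(P)})$, I fix a partition $\Mid(P)=K_1\sqcup\cdots\sqcup K_t$ with every $K_i$ a clique of $\midd(P)$ (singletons allowed, which causes no trouble).

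The heart of the argument is the passage from pairwise edges inside a clique to a common bound for the whole clique, and this is where the Helly hypothesis enters. Fix $i$ and take any $a,b\in K_i$; since $ab\in E(\midd(P))$ we have $CU_P(\{a,b\})=U_P(a)\cap U_P(b)\neq\emptyset$, so $\{U_P(x)\colon x\in K_i\}$ is an intersecting subfamily of $\UE_P$. As $P$ is a Helly poset, $\bigcap_{x\in K_i}U_P(x)=CU_P(K_i)\neq\emptyset$; dually, using $\DE_P$, we get $CD_P(K_i)\neq\emptyset$ (when $|K_i|=1$ both facts are immediate, since a middle element is neither maximal nor minimal). Choosing a common upper bound of $K_i$ and then replacing it by a maximal element lying above it --- still a common upper bound of $K_i$ because $P$ is finite and the strict inequalities are preserved --- and doing the dual thing below, I obtain $u_i\in\Max(P)$ and $d_i\in\Min(P)$ with $d_i<_P x<_P u_i$ for all $x\in K_i$.

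I would then set $D:=\{u_1,\dots,u_t\}\cup\{d_1,\dots,d_t\}$, so $|D|\le 2t$, and check that $D$ is an os-dominating set of $P$. If $x\in\Mid(P)$, then $x\in K_i$ for some $i$ and $d_i<_Px<_Pu_i$; this shows at once that $x$ is adjacent to $u_i$ in $\comp(P)$ and that $x\in U_P(D)\cap D_P(D)$, which is precisely the order-sensitive condition. If $x\in\Max(P)$, then since $P\in\PE_3(k)$ the element $x$ lies in a chain of at least three elements, and the element just below $x$ in that chain is strictly below $x$ and strictly above a third element, hence belongs to $\Mid(P)$; calling it $m$ and taking the index $i$ with $m\in K_i$, we get $d_i<_P m<_P x$, so $x$ is adjacent to $d_i\in D$. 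Symmetrically every $x\in\Min(P)$ is adjacent to some $u_i\in D$. Thus $D$ dominates $\comp(P)$ and meets the middle requirement, giving $\gamma_{\os}(P)\le|D|\le 2t=2\chi(\overline{\midd(P)})$.

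The one step that needs real care is the second paragraph: one must be sure that the hypothesis ``$K_i$ is a clique of $\midd(P)$'' delivers a genuinely pairwise-intersecting family of up-sets and, separately, of down-sets, so that the two Helly properties of $P$ can be invoked to produce a common upper bound and a common lower bound for all of $K_i$ simultaneously. The remaining points --- the harmless passage to extremal bounds via finiteness, and the use of the chain-of-size-three condition to dominate the maximal and minimal elements --- are routine bookkeeping.
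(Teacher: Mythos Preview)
Your proof is correct and follows essentially the same route as the paper: interpret $\chi(\overline{\midd(P)})$ as a clique partition $C_1,\dots,C_r$ of $\midd(P)$, invoke the Helly property to pick $a_i\in CD_P(C_i)$ and $b_i\in CU_P(C_i)$, and observe that $\{a_1,\dots,a_r\}\cup\{b_1,\dots,b_r\}$ is an os-dominating set. Your version is more explicit than the paper's one-line sketch in that you push the chosen bounds to $\Max(P)$ and $\Min(P)$ and verify domination of the extremal elements via the $\PE_3(k)$ hypothesis, but these are details the paper leaves implicit rather than a different argument.
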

\begin{proof}
If $C_1,\ldots,C_r$ are cliques in $\midd(P)$ corresponding a coloring of its complement, then the set $\{a_1,\ldots,a_r\}\cup \{b_1,\ldots,b_r\}$ is an os-dominating set for $P$, where $a_i\in D_P(C_i)$ and $b_i\in U_P(C_i)$ for each $1\leq i\leq r$. 
\end{proof}

\begin{definition}
Let $P\in \PE_3(k)$ be a Helly poset. For a given subset $S\subseteq \Mid(P)$, we set $\UE_P(S)=\{U_P(x)\colon x\in S\}$ and $\DE_P(S)=\{D_P(x)\colon x\in S\}$. Then, $P$ is called a \emph{complete Helly poset} provided that $\UE_P(S)$ is intersecting if and only if $\DE_P(S)$ is intersecting for every subset $S\subseteq \Mid(P)$.
\end{definition}

\begin{lemma}\label{lem:red-ekr}
Let $P\in \PE_3(k)$ be a poset for some $k\geq 4$. If $P$ is a Helly poset, then so is $\red(P)\in \PE(3)$. In particular, if $P$ is a complete Helly poset, then so is $\red(P)$.
\end{lemma}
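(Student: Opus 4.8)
The plan is to reduce the entire statement to a single structural identity relating the up- and down-sets of $\red(P)$ to those of $P$, and then to invoke an elementary ``extend to an extreme element'' argument. Before that, I would settle the bookkeeping that makes the two sides of the identity even comparable: passing from $P$ to $\red(P)$ only \emph{deletes} relations, so every minimal (resp. maximal) element of $P$ remains minimal (resp. maximal) in $\red(P)$, while each $x\in\Mid(P)$ still lies strictly between a minimal and a maximal element of $P$ — take a witness $a<_P x<_P b$ (which exists since $P\in\PE_3(k)$), extend $a$ down to $\Min(P)$ and $b$ up to $\Max(P)$, and note these relations are not among elements of $\Mid(P)$ and hence survive in $\red(P)$. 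Consequently $\Min(\red(P))=\Min(P)$, $\Mid(\red(P))=\Mid(P)$, $\Max(\red(P))=\Max(P)$, the poset $\red(P)$ genuinely lies in $\PE(3)$, and the families $\UE_{\red(P)},\DE_{\red(P)}$ are indexed by the same set $\Mid(P)$ as $\UE_P,\DE_P$.

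The key step is the identity $U_{\red(P)}(x)=U_P(x)\cap\Max(P)$ for every $x\in\Mid(P)$ (and, dually, $D_{\red(P)}(x)=D_P(x)\cap\Min(P)$). For ``$\subseteq$'': $\red(P)$ carries a subset of the relations of $P$, so $U_{\red(P)}(x)\subseteq U_P(x)$; and since $x\in\Mid(P)$ and all $\Mid(P)$-internal relations have been deleted, nothing in $\Mid(P)$ lies above $x$ in $\red(P)$, so every element above $x$ in $\red(P)$ belongs to $\Max(P)$. For ``$\supseteq$'': if $m\in U_P(x)\cap\Max(P)$, then $x<_P m$ is a comparability with $m\notin\Mid(P)$, hence not among elements of $\Mid(P)$, hence preserved in $\red(P)$. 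The down-set case is symmetric.

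Next I would combine this identity with the observation that, for any $S\subseteq\Mid(P)$,
$$\bigcap_{x\in S}U_{\red(P)}(x)\neq\emptyset\ \Longleftrightarrow\ \bigcap_{x\in S}U_P(x)\neq\emptyset,$$
and dually for down-sets. The implication ``$\Rightarrow$'' is immediate from $U_{\red(P)}(x)\subseteq U_P(x)$; for ``$\Leftarrow$'', a common upper bound of $S$ in $P$ can be pushed up to some $m\in\Max(P)$, and then $m\in U_P(x)\cap\Max(P)=U_{\red(P)}(x)$ for all $x\in S$. Specializing to $|S|=2$ shows that a subfamily of $\UE_{\red(P)}$ (resp. $\DE_{\red(P)}$) is intersecting exactly when the corresponding subfamily of $\UE_P$ (resp. $\DE_P$) is (compare Observation~\ref{obser:red-mid}); taking general $S$ then shows that the full intersection of an intersecting subfamily of $\UE_{\red(P)}$ is non-empty whenever $\UE_P$ has the Helly property, and likewise for $\DE$. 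Hence $P$ Helly implies $\red(P)$ Helly. For the ``complete'' refinement, chaining ``$\UE_{\red(P)}(S)$ intersecting $\iff$ $\UE_P(S)$ intersecting'' with the hypothesis ``$\UE_P(S)$ intersecting $\iff$ $\DE_P(S)$ intersecting'' and with ``$\DE_P(S)$ intersecting $\iff$ $\DE_{\red(P)}(S)$ intersecting'' gives the complete-Helly condition for $\red(P)$ verbatim.

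There is no deep obstacle here: the mathematical content is the identity $U_{\red(P)}(x)=U_P(x)\cap\Max(P)$ together with upward/downward extension, and the rest is bookkeeping. The one point I would nail down first — because without it the statement is not even well-posed — is the verification that no element changes its $\Min/\Mid/\Max$ status and that $\red(P)\in\PE(3)$ (i.e. every element still lies in a $3$-chain after the deletions), so that $\UE_{\red(P)}$ and $\DE_{\red(P)}$ are defined on the same index set as $\UE_P$ and $\DE_P$.
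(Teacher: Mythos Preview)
Your proposal is correct and follows essentially the same route as the paper: both arguments rest on the inclusion $U_{\red(P)}(x)\subseteq U_P(x)$ (your identity $U_{\red(P)}(x)=U_P(x)\cap\Max(P)$ is exactly the paper's ``$D_R(a)=D_P(a)\cap\Min(P)$''), together with the ``push a common bound to an extreme element'' step. Your version is somewhat more explicit about the bookkeeping (stability of $\Min/\Mid/\Max$ and membership in $\PE(3)$) and packages the Helly and complete-Helly parts uniformly via the single equivalence $\bigcap_{x\in S}U_{\red(P)}(x)\neq\emptyset\iff\bigcap_{x\in S}U_P(x)\neq\emptyset$, but the mathematical content is the same.
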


\begin{proof}
We first set $R:=\red(P)$, and note that it is sufficient to show that $\UE_{R}=\{U_R(x)\colon x\in \Mid(R)\}$ has the  Helly property, since the case for $\DE_{R}$ can be treated similarly. Suppose that $\{U_{R}(x)\colon x\in S\}\subseteq \UE_{R}$ is an intersecting family for some $S\subseteq  \Mid(R)$ with $|S|\geq 3$. Then, it follows that $U_{R}(x)\cap U_{R}(y)\neq \emptyset$ for every pair of elements $x,y\in S$. However, this forces that the family $\{U_{P}(x)\colon x\in S\}\subseteq \UE_{P}$ is intersecting, since $\emptyset\neq U_{R}(x)\cap U_{R}(y) \subseteq U_{P}(x)\cap U_{P}(y)$ for $x,y\in S$. The  Helly property of $\UE_{P}$ implies that there exists an element $w \in CU_P(S)$. If $w\in \Max(P)$, then $w \in CU_{R}(S)$.  If $w\notin \Max(P)$, then $z \in CU_{R}(S)$ for each $z\in \Max(P)$ with $w<z$. 

For the second claim, let $\{U_{R}(x)\colon x\in S\}\subseteq \UE_{R}$ be an intersecting family for some $S\subseteq  \Mid(R)$. Once again, $\{U_{P}(x)\colon x\in S\}\subseteq \UE_{P}$ is intersecting by the above argument. Completeness of $P$ implies that $\{D_{P}(x)\colon x\in S\}\subseteq \DE_{P}$ is intersecting. It then follows that $D_{R}(x)\cap D_{R}(y)=(D_{P}(x)\cap D_{P}(y))\cap \Min(P)\neq \emptyset$ for $x,y\in S$, since $D_R(a)=D_P(a)\cap \Min(P)$ for each $a\in \Mid(P)$. A similar argument applies when the family $\{D_{R}(x)\colon x\in S\}\subseteq \DE_{R}$ is intersecting. This completes the proof.
\end{proof}

We note that even if a Helly poset $P$ is self-dual, it does not need to be necessarily complete (see Figure~\ref{fig:self-dual}). 
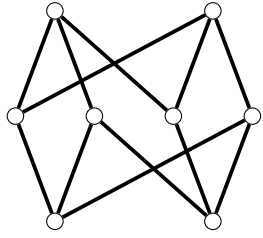
\begin{figure}[ht]
\centering     %%% not \center
\begin{tikzpicture}[scale=.7]
\node [nodel] at (0,2) (1) []{};  
\node [nodel] at (1.5,2) (2) []{};  
\node [nodel] at (3,2) (3) []{};  
\node [nodel] at (4.5,2) (4) []{};  
\node [nodel] at (.75,0) (x) [] {} 
           edge [line width=0.5mm] (1)
           edge [line width=0.5mm] (2)
           edge [line width=0.5mm] (4); 
\node [nodel] at (3.75,0) (y) [] {}  
           edge [line width=0.5mm] (2)
           edge [line width=0.5mm] (3)
           edge [line width=0.5mm] (4);
\node [nodel] at (.75,4) (a) [] {}
           edge [line width=0.5mm] (1)
           edge [line width=0.5mm] (2)
           edge [line width=0.5mm] (3);
\node [nodel] at (3.75,4) (b) [] {} 
           edge [line width=0.5mm] (1)
           edge [line width=0.5mm] (3)
           edge [line width=0.5mm] (4);
\end{tikzpicture}
\caption{A non-complete self-dual  Helly poset.}
\label{fig:self-dual}
\end{figure}

Whenever $P\in \PE_3(k)$ is a complete  Helly poset, the calculation of order-sensitive domination number of $P$ can be interpreted as a weighted clique partition number of the associated middle graph $\midd(P)$.  In more detail, for a given graph $G$, we define the \emph{weight} of a (non-empty) clique $C$ in $G$ by 

\begin{equation*}
\we(C):=\begin{cases}
1,& \quad \textnormal{if}\;|C|=1,\\
2,& \quad \textnormal{if}\;|C|\geq 2.
\end{cases}
\end{equation*}
Furthermore, if $\CE=\{C_1,\ldots,C_n\}$ is a clique partition of $G$, that is, the pairwise intersection of cliques in $\CE$ is empty and $\bigcup_{i=1}^n C_i=V(G)$, then its weight is defined to be the integer 
\begin{equation*}
\we(\CE)=\sum_{i=1}^n \we(C_i).
\end{equation*}

\begin{definition}
We define the \emph{weighted clique partition number} of a graph $G$  by $\we(G):=\min \{\we(\CE)\colon \CE\;\textnormal{is a clique partition of}\;G\}$.
\end{definition}

\begin{theorem}\label{thm:comekr-wclique}
If $P\in \PE_3(k)$ with $k\geq 3$ is a Helly poset, then
$\gamma_{\os}(P)\leq \we(\midd(P))$. In particular, $\gamma_{\os}(P)= \we(\midd(P))$ if $P$ is a complete Helly poset.
\end{theorem}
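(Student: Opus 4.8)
The plan is to prove the two assertions separately, with the inequality $\gamma_{\os}(P)\le \we(\midd(P))$ coming first as it requires no completeness hypothesis and merely formalizes the recipe already used in the proof of Proposition concerning $2\chi(\overline{\midd(P)})$. Given an optimal weighted clique partition $\CE=\{C_1,\dots,C_n\}$ of $\midd(P)$, I would build an os-dominating set $D$ of $P$ as follows: for each clique $C_i$ with $|C_i|\ge 2$, the definition of edges in $\midd(P)$ together with the Helly property of $\UE_P$ and $\DE_P$ guarantees elements $b_i\in CU_P(C_i)$ and $a_i\in CD_P(C_i)$, and we put both into $D$; for each singleton clique $C_i=\{a_i\}$ we simply put $a_i$ itself into $D$. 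Then $|D|=\we(\CE)=\we(\midd(P))$, and I must check $D$ is os-dominating: every $x\in\Mid(P)$ lies in a unique $C_i$, and is therefore either in $D$ (singleton case) or sandwiched $a_i<x<b_i$ with $a_i,b_i\in D$ (large-clique case), so $x\in U_P(D)\cap D_P(D)$; meanwhile the maximal and minimal elements of $P$ are dominated because every element of $\Max(P)$ lies above some $x\in\Mid(P)$ (as every element sits in a $3$-chain) and that $x$ is covered, and dually for $\Min(P)$ — here I would use Observation~\ref{obs:up-down} if convenient, or simply the structure of $\PE_3(k)$. This handles the first inequality.

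For the reverse inequality under the completeness hypothesis, I would start from a minimum os-dominating set $D$ of $P$. By Observation~\ref{obser:red-mid} and Lemma~\ref{lem:red-ekr} it suffices to treat the case $k=3$, so assume $P\in\PE(3)$ and $D\subseteq\Min(P)\cup\Mid(P)\cup\Max(P)$. I want to extract from $D$ a clique partition of $\midd(P)$ of weight at most $|D|$. The key combinatorial step is to consider, for each $x\in\Mid(P)$, its ``reason for being dominated'': if $x\in D$, mark $x$; otherwise pick witnesses $a<x<b$ with $a,b\in D$. Set $D_{\md}=D\cap\Mid(P)$, $D_{\min}=D\cap\Min(P)$, $D_{\max}=D\cap\Max(P)$. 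For each $p\in D_{\max}$, let $S_p$ be the set of $x\in\Mid(P)\setminus D$ whose chosen upper witness is $p$; then $\UE_P(S_p\cup\{\text{anything below }p\})$ is intersecting (all contain $p$), so by completeness $\DE_P(S_p)$ is intersecting, hence by the Helly property of $\DE_P$ there is a common lower bound — in fact $S_p$ together with its clique structure must form a clique in $\midd(P)$. Dually group by lower witnesses in $D_{\min}$. The heart of the argument is to organize these groups into a genuine partition: a middle element $x\notin D$ is assigned to the block of its lower witness, say, and one shows each such block is a clique of $\midd(P)$ of weight $2$ chargeable to two distinct elements of $D$, while elements of $D_{\md}$ give singleton blocks of weight $1$. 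Counting, each element of $D$ is charged at most once, giving $\we(\midd(P))\le\we(\CE)\le|D|=\gamma_{\os}(P)$.

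The main obstacle I anticipate is precisely the bookkeeping in the reverse direction: a single element $a\in D_{\min}$ may serve as the lower witness for middle elements that are pulled up by several different maximal elements, so the naive ``block $=$ all $x$ with lower witness $a$'' need not be a clique of $\midd(P)$ — two such $x$'s share the common down-element $a$ but might fail to share a common up-element. This is exactly where completeness must be invoked more carefully: I would argue that within the set of middle elements below a fixed $b\in D_{\max}$ and above a fixed $a\in D_{\min}$ (a ``cell'' of the grid determined by $D_{\min}\times D_{\max}$), completeness and Helly force genuine cliquehood, and then show one can choose the assignment of each $x\notin D$ to a single cell so that the resulting cells are pairwise disjoint and each consumes exactly the two $D$-elements defining it, with no $D$-element consumed twice. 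Making ``no double-charging'' precise may require a small matching/greedy argument or an appeal to the equivalence between biclique vertex-cover and biclique vertex-partition in the spirit of the remark after \citep{FMPS}; if that turns out delicate I would instead route the reverse inequality through Lemma~\ref{lem:os-equal-comp-star} or through the biclique-partition characterization of $\gamma_{\os}$ promised in Section~\ref{sec:biclique}, where the partition structure is built in.
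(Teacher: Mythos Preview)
Your treatment of the first inequality is correct and essentially identical to the paper's: build an os-dominating set from an optimal clique partition by picking common upper and lower bounds (via Helly) for the non-singleton cliques and retaining the singleton elements.

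For the reverse inequality, however, you have located the right pressure point but then talked yourself out of the direct solution. Your stated obstacle---that the block of all $x\in\Mid(P)$ above a fixed $a\in D_{\min}$ ``need not be a clique of $\midd(P)$'' because two such $x$'s ``might fail to share a common up-element''---is precisely what completeness rules out. If $a<x$ and $a<y$ then $a\in D_P(x)\cap D_P(y)$, so $\DE_P(\{x,y\})$ is intersecting; completeness then gives that $\UE_P(\{x,y\})$ is intersecting, i.e.\ $CU_P(\{x,y\})\neq\emptyset$, so $xy\in E(\midd(P))$. Thus $S_a:=\{x\in\Mid(P):a<x\}$ \emph{is} a clique in $\midd(P)$ for every $a\in D_{\min}$, with no need to intersect with an upper cell or to choose witnesses.

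Once you see this, no matching, no cell decomposition, and no detour through bicliques is required. The paper's argument (after reducing to $k=3$) is simply: take a minimum os-dominating set $D=D_{\min}\cup D_{\md}\cup D_{\max}$, and first show $|D_{\min}|=|D_{\max}|$ by a replacement trick---if, say, $|D_{\min}|<|D_{\max}|$, then for each $a_i\in D_{\min}$ the set $S_i=\{x\in\Mid(P):a_i<x\}$ has $\DE_P(S_i)$ intersecting, hence $\UE_P(S_i)$ intersecting, hence a single $b'_i\in CU_P(S_i)$ exists, and $\{b'_1,\dots,b'_{|D_{\min}|}\}\cup D_{\md}\cup D_{\min}$ is a strictly smaller os-dominating set. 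With $|D_{\min}|=|D_{\max}|=l$, the family $\{S_1,\dots,S_l\}\cup\{\{x\}:x\in D_{\md}\}$ is a clique cover of $\midd(P)$ of weight at most $2l+|D_{\md}|=|D|$; refining to a partition can only decrease weight. (Equivalently, one can skip the equality step and just group by whichever of $D_{\min},D_{\max}$ is smaller.) Your proposed route would eventually work, but it is substantially more complicated than necessary because it does not exploit completeness at the exact spot where it bites.
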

\begin{proof}
By taking Observation~\ref{obser:red-mid} and Lemma~\ref{lem:red-ekr} into account, it suffices to prove the claim when $k=3$.

Assume that $\CE$ is a clique partition of $\midd(P)$.
Decompose $\CE$ as $\CE_1\cup \CE_2$, where $\CE_1$ consists of all cliques in $\CE$ of order at least three. For $C\in \CE_1$, we choose
$a_C\in D_P(C)$ and $b_C\in U_P(C)$, and form the set
\begin{equation*}
D(\CE):=\bigcup_{C\in \CE_1} \{a_C,b_C\}\cup \bigcup_{C\in \CE_2} C.
\end{equation*}
Clearly, $D(\CE)$ is an os-dominating set for $P$ of size $\we(\CE)$. Thus, we have $\gamma_{\os}(P)\leq \we(\CE)$.

Suppose next that $P$ is a complete Helly poset, and let $D\subseteq X$ be an os-dominating set for $P$ of minimum size. We partition $D$ as $D_{\max}\cup D_{\md}\cup D_{\min}$ in the obvious way, and claim first that $|D_{\max}|=|D_{\min}|$. Write
$D_{\max}=\{b_1,\ldots,b_k\}$ and $D_{\min}=\{a_1,\ldots,a_l\}$, and assume without loss of generality that $l<k$. For each $1\leq i\leq l$,
we set $S_i:=\{x\in \Mid(P)\colon a_i<x\}$. Since $\DE_P(S_i)$ is intersecting, then so is $\UE_P(S_i)$ by the completeness of $P$. So,
if we choose $b'_i\in CU_P(S_i)$, we claim that
$$D':=\{b'_1,\ldots,b'_l\}\cup D_{\md}\cup D_{\min}$$ 
is an os-dominating set for $P$. Indeed, let $x\in \Mid(P)\setminus D_{\md}$ be given. Since $D$ is an os-dominating set, there exist $i\in [l]$ and $j\in [k]$ such that $a_i<x<b_j$. However, we then have $x<b'_i$, since $x\in S_i$. This yields the desired contradiction, since $|D'|<|D|$.

Finally, the family $\SE=\{S_1,\ldots,S_l\}\cup \{\{x\}\colon x\in D_{\md}\}$ provides a clique partition of $\midd(P)$ such that $\we(\SE)=|D|$. This completes the proof.
\end{proof}

\begin{remark}
The (complete) Helly property in Theorem~\ref{thm:comekr-wclique} is essential. For example, if we consider $G=C_4$, we would have $\midd(\mathscr{P}_3(C_4))\cong C_4^2\cong K_4$ so that $\we(\midd(\mathscr{P}_3(C_4)))=2$, while
$\gamma_{\os}(\mathscr{P}_3(C_4))=3$. Furthermore, the poset $P$ depicted in Figure~\ref{fig:non-complete} is a Helly poset, which is not complete. In particular, we note that $\gamma_{\os}(P)=3<4=\we(\midd(P))$.
\end{remark}

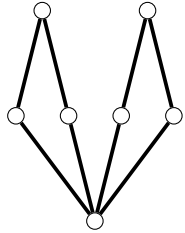
\begin{figure}[ht]
\centering     %%% not \center
\begin{tikzpicture}[scale=.7]
\node [nodel] at (1,2) (1) []{};  
\node [nodel] at (2,2) (2) []{};  
\node [nodel] at (3,2) (3) []{};  
\node [nodel] at (4,2) (4) []{};  
\node [nodel] at (2.5,0) (x) [] {}  
           edge [line width=0.5mm] (1)
           edge [line width=0.5mm] (2)
           edge [line width=0.5mm] (3)
           edge [line width=0.5mm] (4);
\node [nodel] at (1.5,4) (a) [] {}
           edge [line width=0.5mm] (1)
           edge [line width=0.5mm] (2);
\node [nodel] at (3.5,4) (b) [] {} 
           edge [line width=0.5mm] (3)
           edge [line width=0.5mm] (4);
\end{tikzpicture}
\caption{A non-complete  Helly poset $P$ with $\gamma_{\os}(P)=3<4=\we(\midd(P))$.}
\label{fig:non-complete}
\end{figure}

We denote by $\vdt(G)$, the maximum number of vertex disjoint triangles in a given graph $G$ (see \cite{ZLJ}). Note that the problem of partitioning a graph into vertex disjoint triangles corresponds to the well-known $3$-dimensional matching problem, which also appears in Karp's paper \cite{Karp1972} on the theory of computational complexity.

\begin{corollary}
If $P\in \PE(3)$ is a complete  Helly poset, then $$\gamma_{\os}(P)\leq |\Mid(P)|-\vdt(\midd(P)),$$ 
with equality if the clique number of $\midd(P)$ is three.
\end{corollary}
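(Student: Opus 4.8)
The plan is to reduce the whole statement to Theorem~\ref{thm:comekr-wclique} and then argue combinatorially about the weighted clique partition number of the middle graph. Write $G=\midd(P)$ and $m=|\Mid(P)|=|V(G)|$. Since $P$ is a complete Helly poset in $\PE(3)$, Theorem~\ref{thm:comekr-wclique} gives $\gamma_{\os}(P)=\we(G)$, so it suffices to establish $\we(G)\le m-\vdt(G)$ in general and $\we(G)=m-\vdt(G)$ under the extra hypothesis $\omega(G)=3$.

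For the inequality I would pick a maximum family $\mathcal{T}$ of pairwise vertex-disjoint triangles in $G$, with $|\mathcal{T}|=\vdt(G)=:t$. These triangles cover $3t$ vertices; adjoining to $\mathcal{T}$ the $m-3t$ singleton cliques on the uncovered vertices yields a clique partition $\CE$ of $G$ with $\we(\CE)=2t+(m-3t)=m-t$. Hence $\we(G)\le \we(\CE)=m-\vdt(G)$, and combining with $\gamma_{\os}(P)=\we(G)$ gives the claimed bound $\gamma_{\os}(P)\le |\Mid(P)|-\vdt(\midd(P))$.

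For the equality statement, assume $\omega(G)=3$ and let $\CE$ be a clique partition of $G$ of minimum weight, which exists since there are only finitely many partitions of $V(G)$. Every member of $\CE$ has at most three vertices. The bookkeeping I would use is the identity $\we(C)=|C|$ when $1\le |C|\le 2$ and $\we(C)=|C|-1$ when $|C|=3$; so if $c$ is the number of triangles among the members of $\CE$, then $\we(G)=\we(\CE)=\sum_{C\in\CE}|C|-c=m-c$. The $c$ triangles in $\CE$ are pairwise vertex-disjoint, so $c\le\vdt(G)$, whence $\we(G)=m-c\ge m-\vdt(G)$. Together with the upper bound this forces $\we(G)=m-\vdt(G)$, that is, $\gamma_{\os}(P)=|\Mid(P)|-\vdt(\midd(P))$.

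I do not expect a genuine obstacle: both directions are short counting arguments resting on Theorem~\ref{thm:comekr-wclique}. The only point needing care is the interaction between the weight function and clique sizes — each clique contributes its full cardinality to the weight \emph{unless} it is a triangle, which loses exactly one unit. This is precisely why the condition $\omega(\midd(P))=3$ is needed for equality: a clique of size $\ge 4$ would still have weight $2$ and therefore could ``save'' more than one unit per block, allowing $\we(\midd(P))$ to drop strictly below $|\Mid(P)|-\vdt(\midd(P))$.
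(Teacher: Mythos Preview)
Your proof is correct and is precisely the argument the paper has in mind: the corollary is stated without proof because it follows directly from Theorem~\ref{thm:comekr-wclique} via the counting you give. The only cosmetic remark is that for the bare inequality you could cite just the first part of Theorem~\ref{thm:comekr-wclique} (which needs only the Helly property), though since completeness is assumed throughout this makes no difference.
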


We next consider the poset $\mathscr{P}_3(G)$, and characterize graphs for which their associated posets are complete  Helly posets. Recall that the $3$-sun is the graph obtained from $C_6$ by turning an independent set of size three into a clique.

\begin{lemma}\label{lem:bipext-double-ekr}
$B_e(G)$ is $C_6$-free if and only if $G$ is $(3\text{-sun},C_4,C_5,C_6)$-free.
\end{lemma}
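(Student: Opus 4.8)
The plan is to prove the biconditional $B_e(G)$ is $C_6$-free $\iff$ $G$ is $(3\text{-sun},C_4,C_5,C_6)$-free by tracking how cycles in the bipartite graph $B_e(G)$ correspond to closed walks in $G$. Recall that $x_i$ and $y_j$ are adjacent in $B_e(G)$ precisely when $i=j$ or $ij\in E(G)$; so a $6$-cycle in $B_e(G)$, which necessarily alternates between the $X$- and $Y$-sides, has the form $x_{i_1}\,y_{j_1}\,x_{i_2}\,y_{j_2}\,x_{i_3}\,y_{j_3}$ with all six vertices distinct, and each consecutive pair $(i_a,j_a)$, $(i_{a+1},j_a)$ either coincides or is an edge of $G$. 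Note that the indices $i_1,i_2,i_3$ (resp.\ $j_1,j_2,j_3$) need not be distinct as elements of $V(G)$ — only the vertices $x_{i_a}$ of $B_e(G)$ are distinct, which is automatic, and likewise the $y_{j_a}$. So the real content is to analyze which multisets $\{i_1,i_2,i_3\}$ and $\{j_1,j_2,j_3\}$ of $V(G)$, together with the adjacency/equality constraints, can arise.

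For the forward direction (contrapositive), I would start from an induced copy of one of the four forbidden graphs in $G$ and exhibit an induced $C_6$ in $B_e(G)$. For $C_6=v_1v_2\cdots v_6$ in $G$, the hexagon $x_{v_1}y_{v_2}x_{v_3}y_{v_4}x_{v_5}y_{v_6}$ works, using only genuine edges; for $C_5=v_1\cdots v_5$ one uses a repeated index, e.g.\ $x_{v_1}y_{v_2}x_{v_3}y_{v_4}x_{v_5}y_{v_1}$ (the last edge $x_{v_1}y_{v_1}$ is the ``diagonal'' $i=j$ edge, and $v_5v_1\in E$); for $C_4=v_1v_2v_3v_4$ one uses $x_{v_1}y_{v_2}x_{v_3}y_{v_4}$ extended by two diagonal edges to length six, say $x_{v_1}y_{v_2}x_{v_2}y_{v_3}x_{v_3}y_{v_4}$ — one must check induced-ness, i.e.\ that no forbidden chords appear, which translates into the non-edges of $C_4$. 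The $3$-sun requires the most care: label the triangle $a,b,c$ and the degree-two vertices $a'$ (adjacent to $b,c$), $b'$ (adjacent to $a,c$), $c'$ (adjacent to $a,b$), and look for a hexagon using a mixture of real edges and diagonal edges; a natural candidate is $x_a y_{b'} x_c y_{a'} x_b y_{c'}$ using the six ``spoke'' edges of the sun, and one checks the chords are exactly excluded by the sun's non-edges. In each case the verification that the six $B_e(G)$-vertices are pairwise non-adjacent except along the hexagon is a finite check against the defining adjacency rule.

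For the converse, assume $B_e(G)$ contains an induced $C_6$ as above and deduce that $G$ contains one of the four forbidden subgraphs. The argument splits on how many of the six defining relations are ``diagonal'' ($i=j$) versus genuine edges, equivalently on the coincidence pattern among $i_1,i_2,i_3$ and $j_1,j_2,j_3$. If all six are genuine edges and all indices are distinct, the walk $i_1 j_1 i_2 j_2 i_3 j_3$ is a closed walk of length six in $G$; using the induced condition (no chords in the hexagon forces no ``short-circuit'' edges among the relevant indices) one concludes this closed walk is an induced $C_6$, or collapses — if indices coincide — to an induced $C_5$, $C_4$, or a shorter configuration that, combined with the diagonal edges, forces a $3$-sun or $C_4$. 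The bookkeeping is a moderately long case analysis: for each of a handful of coincidence patterns one writes down the induced subgraph of $G$ spanned by $\{i_1,i_2,i_3,j_1,j_2,j_3\}$ (as a set in $V(G)$) and reads off which forbidden graph it contains, using the induced-ness of the $C_6$ in $B_e(G)$ to rule out extra edges.

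The main obstacle I anticipate is the $3$-sun case in both directions: unlike $C_4,C_5,C_6$, the $3$-sun is not itself a cycle, so the correspondence with a closed walk of length six in $G$ is not transparent — it arises only through a specific interplay of three diagonal edges and three genuine edges, and one has to pin down exactly that pattern. I would organize the converse so that after stripping off the cases with few diagonal edges (which give $C_6$, $C_5$, $C_4$ more or less directly), the remaining case with three diagonal edges is handled last and shown to force precisely a $3$-sun; keeping careful track of which pairs of indices are equal and which are non-adjacent (from the induced hypothesis) is where the work concentrates.
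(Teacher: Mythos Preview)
Your overall strategy matches the paper's: interpret an induced $C_6$ in $B_e(G)$ as a closed alternating walk and split into cases according to which of the six consecutive index pairs coincide (your ``diagonal'' edges). Two concrete problems, however, would derail the execution as written.

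First, a small one: your proposed hexagon for $C_4$, namely $x_{v_1}y_{v_2}x_{v_2}y_{v_3}x_{v_3}y_{v_4}$, is not induced --- it has the chord $y_{v_2}x_{v_3}$, since $v_2v_3\in E(C_4)$. A correct choice is $x_{v_1}y_{v_2}x_{v_2}y_{v_3}x_{v_4}y_{v_4}$, where the needed non-chords $v_1v_3,\,v_2v_4\notin E$ are exactly the two missing diagonals of $C_4$.

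Second, and more substantively: your expectation that the $3$-sun arises from ``three diagonal edges and three genuine edges'' is wrong. Two consecutive diagonal edges force two equal $X$-indices (or two equal $Y$-indices), which is impossible; so diagonals must alternate, and three alternating diagonals would give three $G$-vertices forming a triangle via the genuine edges, which immediately creates chords in the hexagon. Hence an induced $C_6$ in $B_e(G)$ has at most two diagonal edges. In fact your own forward-direction hexagon for the $3$-sun, $x_a y_{b'} x_c y_{a'} x_b y_{c'}$, uses six genuine edges and zero diagonals --- that is where the $3$-sun lives. The paper's organization reflects this: with at least one diagonal you get $C_4$ or $C_5$; with all six indices distinct you get a $6$-cycle $v_{i_1}v_{j_1}v_{i_2}v_{j_2}v_{i_3}v_{j_3}$ in $G$, but the bipartiteness of $B_e(G)$ imposes no constraint on edges among $\{v_{i_1},v_{i_2},v_{i_3}\}$ or among $\{v_{j_1},v_{j_2},v_{j_3}\}$. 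It is precisely this unconstrained freedom that lets a $3$-sun (one side a triangle, the other independent) appear; the remaining configurations of extra edges yield an induced $C_4$, $C_5$, or $C_6$. Reorganize your converse around this observation and the case analysis goes through cleanly.
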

\begin{proof}
Since any induced cycle of length $4$, $5$ or $6$ as well as an induced $3\text{-sun}$ yields an induced $C_6$ in $B_e(G)$, we only verify the sufficiency .

We write $V=\{v_1,\ldots,v_n\}$, and assume that $B_e(G)$ contains an induced $C_6$ on $$\{x_{i_1},y_{i_2},x_{i_3}, y_{i_4}, x_{i_5}, y_{i_6}\}.$$

Suppose that there exists $r\in [6]$ such that $i_r=i_{r+1}$, where indices are taken modulo $6$. Assume without loss of generality that $i_1=i_2$. Since $x_{i_1}y_{i_4}\notin E(B_e(G))$, we have $i_3\neq i_4$. A similar reasoning implies that $i_5\neq i_6$. It then follows that either $i_4=i_5$ or else $i_4\neq i_5$. In the former case, $G$ contains an induced $C_4$, while in the latter case, it contains an induced $C_5$. 

Now we are left with the case that $i_r\neq i_s$ for all distinct $r$ and $s$. In such a case, the subgraph induced by the set $K=\{v_{i_1},v_{i_2},v_{i_3}, v_{i_4}, v_{i_5}, v_{i_6}\}$ contains a $C_6$ with the edges in the cyclic fashion together with possible edges among the vertices in $S=\{v_{i_1},v_{i_3}, v_{i_5}\}$ or among the vertices in $T=\{v_{i_2},v_{i_4}, v_{i_6}\}$. If one of $S$ and $T$ is an independent set while the other is a clique in $G$, then $K$ induces a $3\text{-sun}$ in $G$. If both are independent, then $G[K]\cong C_6$. Observe that in every remaining case, either $C_4$ or $C_5$ appears as an induced subgraph of $G[K]$. This completes the proof.
\end{proof}

\begin{corollary}\label{cor:h3-cekr}
$\mathscr{P}_3(G)$ is a complete  Helly poset if $G$ is $(3\text{-sun},C_4,C_5,C_6)$-free.
\end{corollary}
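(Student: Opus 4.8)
The plan is to reduce the statement to two ingredients already available: the Proposition asserting that a poset in $\PE_3(k)$ with $k\geq 3$ is a Helly poset whenever its graphs $H_u$ and $H_d$ are $C_6$-free, and a direct verification of the completeness condition using the symmetry of the construction.

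First I would record the structure of $P:=\mathscr{P}_3(G)$. Its layers are $\Min(P)=\{x_{(1,i)}\colon i\in[n]\}$, $\Mid(P)=\{x_{(2,i)}\colon i\in[n]\}$ and $\Max(P)=\{x_{(3,i)}\colon i\in[n]\}$, each of which is an antichain, and every comparability between consecutive layers is a covering relation. In particular $x_{(1,i)}\prec x_{(2,i)}\prec x_{(3,i)}$ (the relations coming from $i=i$) is a chain of size three through every element, so $P\in\PE(3)\subseteq\PE_3(3)$ and the earlier results apply. Moreover, for $x_{(2,i)}\in\Mid(P)$ one has $U_P(x_{(2,i)})=\{x_{(3,j)}\colon j\in N_G[i]\}$ and $D_P(x_{(2,i)})=\{x_{(1,j)}\colon j\in N_G[i]\}$; that is, both are the closed neighborhood $N_G[i]$ read in the top, respectively the bottom, copy of $[n]$.

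For the Helly property I would observe that, identifying $\Mid(P)$ with one side and $\Max(P)$ with the other, the graph $H_u(P)=\comp(P)[\Mid(P)\cup\Max(P)]$ is exactly the extended double cover $B_e(G)$, since $x_{(2,i)}$ and $x_{(3,j)}$ are comparable in $P$ precisely when $i=j$ or $ij\in E(G)$ and since $\Mid(P),\Max(P)$ are antichains; symmetrically $H_d(P)\cong B_e(G)$. Because $G$ is $(3\text{-sun},C_4,C_5,C_6)$-free, Lemma~\ref{lem:bipext-double-ekr} gives that $B_e(G)$, hence both $H_u(P)$ and $H_d(P)$, is $C_6$-free, and the Proposition then yields that $\mathscr{P}_3(G)$ is a Helly poset.

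Finally, for completeness I would invoke the index-preserving bijection $\varphi\colon\Max(P)\to\Min(P)$, $x_{(3,j)}\mapsto x_{(1,j)}$. By the computation above, $\varphi$ carries $U_P(x_{(2,i)})$ onto $D_P(x_{(2,i)})$ for every $i$, so for any pair $i,j$ we have $U_P(x_{(2,i)})\cap U_P(x_{(2,j)})\neq\emptyset$ if and only if $D_P(x_{(2,i)})\cap D_P(x_{(2,j)})\neq\emptyset$; consequently $\UE_P(S)$ is intersecting if and only if $\DE_P(S)$ is intersecting, for every $S\subseteq\Mid(P)$. Hence $\mathscr{P}_3(G)$ is a complete Helly poset. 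There is no genuine obstacle here: the whole argument rests on the identifications $H_u(\mathscr{P}_3(G))\cong H_d(\mathscr{P}_3(G))\cong B_e(G)$ and on the fact that $U_P$ and $D_P$ of a middle element are the same closed neighborhood in two disjoint copies of $[n]$; the only point requiring minor care is confirming $\mathscr{P}_3(G)\in\PE_3(3)$ so that the cited statements are applicable.
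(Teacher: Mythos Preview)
Your proposal is correct and matches what the paper intends: the corollary is stated without proof, implicitly relying on Lemma~\ref{lem:bipext-double-ekr} together with the Proposition on $H_u$ and $H_d$, plus the evident up/down symmetry of $\mathscr{P}_3(G)$ for completeness. Your explicit identification $H_u(\mathscr{P}_3(G))\cong H_d(\mathscr{P}_3(G))\cong B_e(G)$ and the index-preserving bijection $\varphi$ are exactly the right way to fill in the details.
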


\begin{corollary}\label{cor:ekr-graph-bound}
If $G$ is $(3\text{-sun},C_4,C_5,C_6)$-free, then 
$$\gamma_{\os}(\mathscr{P}_3(G))=\gamma_{\rom}(G)=\we(G^2)\leq 2\chi(\overline{G^2}).$$
\end{corollary}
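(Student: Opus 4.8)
The plan is to assemble the chain of equalities and the inequality from results already established in the excerpt, with only one genuinely new ingredient to supply: the identity $\gamma_{\rom}(G)=\we(G^2)$ for $(3\text{-sun},C_4,C_5,C_6)$-free graphs $G$. First I would note that $\gamma_{\os}(\mathscr{P}_3(G))=\gamma_{\rom}(G)$ is immediate from Theorem~\ref{thm:os-dom-roman}. By Lemma~\ref{lem:bipext-double-ekr} and Corollary~\ref{cor:h3-cekr}, the hypothesis on $G$ forces $\mathscr{P}_3(G)$ to be a complete Helly poset, so Theorem~\ref{thm:comekr-wclique} yields $\gamma_{\os}(\mathscr{P}_3(G))=\we(\midd(\mathscr{P}_3(G)))$. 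Finally the last inequality $\we(\midd(\mathscr{P}_3(G)))\le 2\chi(\overline{\midd(\mathscr{P}_3(G))})$ holds because any proper coloring of the complement partitions $\midd(\mathscr{P}_3(G))$ into cliques, each of weight at most $2$, exactly as in the proof of the proposition preceding Theorem~\ref{thm:comekr-wclique}; alternatively it follows directly from that proposition applied to the Helly poset $\mathscr{P}_3(G)$.

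So the crux is to identify $\midd(\mathscr{P}_3(G))$ with $G^2$ and to reconcile the weighted clique partition number with what we want. By the construction of $\mathscr{P}_3(G)$, the middle layer is $\{x_{(2,i)}\colon i\in[n]\}$, and $x_{(2,i)}$ and $x_{(2,j)}$ are joined in $\midd(\mathscr{P}_3(G))$ precisely when they have a common element above (some $x_{(3,l)}$ with $il,jl\in E$ or $l=i=j$ — but $i\ne j$ here, so $l$ is a common neighbor, or one of $i,j$ equals $l$, i.e. $ij\in E$) and a common element below (symmetrically). Unwinding this, $x_{(2,i)}x_{(2,j)}$ is an edge iff $\dist_G(i,j)\le 2$, i.e. $\midd(\mathscr{P}_3(G))\cong G^2$. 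Hence $\we(\midd(\mathscr{P}_3(G)))=\we(G^2)$, which completes the equality $\gamma_{\rom}(G)=\we(G^2)$ as a corollary of the already-proven chain, and feeds into the final inequality.

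The step I expect to need the most care is the edge characterization of $\midd(\mathscr{P}_3(G))$: one must handle the degenerate covering relations in $\mathscr{P}_3(G)$ (the "$i=j$" clause in the covering rule), check that isolated-vertex or small-distance corner cases behave, and confirm that $CU$ and $CD$ being nonempty for a pair really does translate to distance at most two rather than something weaker or stronger. Once that identification is in hand, everything else is bookkeeping: string together Theorem~\ref{thm:os-dom-roman}, Corollary~\ref{cor:h3-cekr}, Theorem~\ref{thm:comekr-wclique}, and the $2\chi(\overline{\cdot})$ bound. I would write the proof as: "By Theorem~\ref{thm:os-dom-roman}, $\gamma_{\os}(\mathscr{P}_3(G))=\gamma_{\rom}(G)$. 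Since $G$ is $(3\text{-sun},C_4,C_5,C_6)$-free, Corollary~\ref{cor:h3-cekr} shows $\mathscr{P}_3(G)$ is a complete Helly poset, so $\gamma_{\os}(\mathscr{P}_3(G))=\we(\midd(\mathscr{P}_3(G)))$ by Theorem~\ref{thm:comekr-wclique}. A direct check of the construction gives $\midd(\mathscr{P}_3(G))\cong G^2$, whence $\gamma_{\rom}(G)=\we(G^2)$. The bound $\we(G^2)\le 2\chi(\overline{G^2})$ is then the inequality of the proposition preceding Theorem~\ref{thm:comekr-wclique} (or immediate: a clique cover of $G^2$ of size $\chi(\overline{G^2})$ has weight at most $2\chi(\overline{G^2})$)."
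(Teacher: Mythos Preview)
Your proposal is correct and follows essentially the same route as the paper: invoke Corollary~\ref{cor:h3-cekr} to get that $\mathscr{P}_3(G)$ is a complete Helly poset, combine Theorems~\ref{thm:os-dom-roman} and~\ref{thm:comekr-wclique}, and use the identification $\midd(\mathscr{P}_3(G))\cong G^2$. The paper's proof is terser (it simply asserts $\midd(\mathscr{P}_3(G))\cong G^2$ without the detailed edge check you outline, and leaves the final inequality implicit), but the ingredients and logic are the same.
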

\begin{proof}
We note that $\mathscr{P}_3(G)$ is a complete  Helly poset by Corollary~\ref{cor:h3-cekr}. Therefore, the claim follows from Theorems~\ref{thm:os-dom-roman} and \ref{thm:comekr-wclique}, since $\midd(\mathscr{P}_3(G))\cong G^2$.
\end{proof}
Notice that the class of strongly chordal graphs is not a subclass of  $(3\text{-sun},C_4,C_5,C_6)$-free graphs. However, we next show that the same conclusion holds for this class as well. 
\begin{corollary}
If $G$ is a strongly chordal graph, we have $\gamma_{\os}(\mathscr{P}_3(G))=\gamma_{\rom}(G)=\we(G^2)$. 
\end{corollary}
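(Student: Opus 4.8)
The plan is to reduce the strongly chordal case to the machinery already built in Corollary~\ref{cor:ekr-graph-bound}, so the only real task is to verify that $\mathscr{P}_3(G)$ is a complete Helly poset whenever $G$ is strongly chordal, even though $G$ itself may fail to be $(3\text{-sun},C_4,C_5,C_6)$-free. Recall that $\midd(\mathscr{P}_3(G))\cong G^2$, and for a graph $G$ on $[n]$ the families $\UE_P$ and $\DE_P$ of the poset $P=\mathscr{P}_3(G)$ are, up to the obvious identification, both copies of the family $\{N_G[i]\colon i\in[n]\}$ of closed neighborhoods of $G$, viewed as subsets of the top (respectively bottom) layer; moreover the incidence pattern is the same in both layers. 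Consequently, for $P=\mathscr{P}_3(G)$ the families $\UE_P$ and $\DE_P$ are literally isomorphic via the map $x_{(3,i)}\leftrightarrow x_{(1,i)}$, so $\UE_P(S)$ is intersecting if and only if $\DE_P(S)$ is intersecting automatically — the completeness half of ``complete Helly'' is free for every $\mathscr{P}_3(G)$. The content is therefore entirely the Helly property: one must show that the family $\{N_G[i]\colon i\in[n]\}$ of closed neighborhoods of a strongly chordal graph $G$ has the Helly property.

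First I would recall the standard fact (due to Farber) that strongly chordal graphs are exactly the graphs admitting a strong elimination ordering, and one of the well-known equivalent characterizations is precisely that in a strongly chordal graph the closed neighborhoods $\{N_G[v]\colon v\in V(G)\}$ form a Helly family — indeed strongly chordal graphs can be characterized as the chordal graphs that are also ``neighborhood-Helly'' in this closed sense, and this is often phrased via the statement that $G$ is strongly chordal iff $G$ is chordal and the closed neighborhood hypergraph has the (strong) Helly property, or equivalently iff every ``sun-free'' condition holds. I would cite Farber's paper and/or the Brandstädt--Le--Spinrad survey for this. Granting this, the family $\UE_P$ (closed neighborhoods sitting in layer $3$) has the Helly property, and identically $\DE_P$ does, so $\mathscr{P}_3(G)$ is a Helly poset; combined with the automatic completeness observed above, it is a complete Helly poset.

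With $\mathscr{P}_3(G)$ shown to be a complete Helly poset, the chain of equalities follows exactly as in the proof of Corollary~\ref{cor:ekr-graph-bound}: Theorem~\ref{thm:os-dom-roman} gives $\gamma_{\os}(\mathscr{P}_3(G))=\gamma_{\rom}(G)$, and Theorem~\ref{thm:comekr-wclique} together with $\midd(\mathscr{P}_3(G))\cong G^2$ gives $\gamma_{\os}(\mathscr{P}_3(G))=\we(\midd(\mathscr{P}_3(G)))=\we(G^2)$, yielding $\gamma_{\rom}(G)=\we(G^2)$ as claimed. (I would note that the final bound $\le 2\chi(\overline{G^2})$ from Corollary~\ref{cor:ekr-graph-bound} is not asserted here, presumably because $\we(G^2)\le 2\chi(\overline{G^2})$ holds unconditionally from the definition of $\we$, so it could be appended if desired.)

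The step I expect to be the main obstacle is pinning down and correctly invoking the precise characterization that the closed neighborhood family of a strongly chordal graph is Helly — one must be careful to use the \emph{closed} neighborhoods (open neighborhoods need not be Helly even in strongly chordal graphs) and to cite a version of Farber's theorem in exactly this form, since several nearby characterizations (open-neighborhood Helly plus chordal, or the existence of a strong elimination ordering, or being sun-free chordal) are equivalent but phrased differently across the literature. Once that citation is secured, everything else is a transcription of the already-established framework, so there is no genuinely new combinatorial difficulty; the only delicate point is that $G$ is \emph{not} required to be $(3\text{-sun},C_4,C_5,C_6)$-free here, so one cannot route through Lemma~\ref{lem:bipext-double-ekr} or Corollary~\ref{cor:h3-cekr} and must instead establish the Helly property of $\mathscr{P}_3(G)$ directly from the strongly chordal hypothesis.
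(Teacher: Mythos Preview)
Your argument is correct and takes a genuinely different route from the paper. The paper cites the known equivalence ``$G$ is strongly chordal $\Leftrightarrow$ $B_e(G)$ is chordal bipartite'' (from Brandst\"adt's bipartite-classes survey), infers that $\mathscr{P}_3(G)$ is a weakly chordal poset (with a footnote invoking Observation~\ref{obs:up-down} to pass to $\mathscr{P}_3(G)^m$ if needed), and then applies Corollary~\ref{cor:wcp-ekr} to obtain the Helly property. You instead invoke directly that the closed-neighborhood family $\{N_G[v]\}$ of a strongly chordal graph is Helly (true, e.g.\ via Farber's disk-Helly characterization, or via strongly chordal $\Rightarrow$ dually chordal $\Rightarrow$ closed-neighborhood hypertree), and identify $\UE_P$ and $\DE_P$ with this family. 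Your route is shorter, avoids the detour through weak chordality of the three-layer comparability graph, and makes the symmetry argument for completeness explicit, which the paper leaves implicit.

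One correction to your commentary, though it does not affect your proof: you write that $G$ ``is not required to be $(3\text{-sun},C_4,C_5,C_6)$-free here, so one cannot route through Lemma~\ref{lem:bipext-double-ekr} or Corollary~\ref{cor:h3-cekr}.'' In fact every strongly chordal graph \emph{is} $(3\text{-sun},C_4,C_5,C_6)$-free: chordality forbids the three induced cycles, and sun-freeness forbids the $3$-sun. Thus Corollary~\ref{cor:ekr-graph-bound} already covers the strongly chordal case, and the present corollary is (contrary to the paper's own remark preceding it) a specialization rather than an extension. Your direct Helly argument and the paper's chordal-bipartite argument are both valid alternative proofs, but the most economical route would simply be to observe the containment of graph classes.
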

\begin{proof}
Indeed, the graph $B_e(G)$ is chordal bipartite if and only if $G$ is a strongly chordal graph (compare to~\citep[Theorem $2.3$ $(b)$]{B-bipclasses}). However, this in turn forces that $\mathscr{P}_3(G)$ is a weakly chordal poset if and only if $G$ is a strongly chordal graph.\footnote{Note that if necessary, we have the freedom of replacing $\mathscr{P}_3(G)$ with the poset $\mathscr{P}_3(G)^m$ by Observation~\ref{obs:up-down}.} Thus, $\mathscr{P}_3(G)$ is a complete  Helly poset. So, the claim follows from Theorem~\ref{thm:comekr-wclique}.
\end{proof}

\begin{remark}
We note that  the inequality $\chi(\overline{G^2})\leq \gamma(G)$ holds for every graph $G$, and it could even be strict in general. However, we have not been able to decide whether it is strict on the class of $(3\text{-sun},C_4,C_5,C_6)$-free graphs.
\end{remark}

%%%%%%%%%%%%%%%%%%%%%%%%%%%%%%%%%%%%%%%%%%%%%%%%%%%%%%%%%%%%%%%
%%%%%%%%%%%%%%%%%%%%%%%%%%%%%%%%%%%%%%%%%%%%%%%%%%%%%%%%%%
\section{Biclique vertex partition in graphs}\label{sec:biclique}
In this section, we prove that the order-sensitive domination number of a poset in the class $\PE_3(k)$ with $k\geq 3$ can be interpreted as the biclique vertex-partition number of a bipartite graph constructed from the poset itself. The idea of associating a bipartite graph to a given poset seems to first appear in the work of Ford and Fulkerson~\citep[page $62$]{FF-book} (which was further studied by Eschen et al.~\citep{EHSS}). The following is a slightly modified version of the bipartite transformation that they consider.

\begin{definition}
Let $P=(X,\leq)$ be a poset. Its \emph{bipartite transformation} $\mathcal{B}(P)$ is the bipartite graph defined by $V(\mathcal{B}(P)):=V_1\cup V_2$ where
$V_1:=\{x'\colon x\in \Min(P)\cup \Mid(P)\}$ and $V_2:=\{x''\colon x\in \Max(P)\cup \Mid(P)\}$ such that $x'y''\in E(\mathcal{B}(P))$ if and only if either $x<y$ in $P$ or $x=y\in \Mid(P)$.
\end{definition}
\begin{example}
We draw a poset and its bipartite transformation in Figure~\ref{fig:fig1}. Observe that $\bp(\mathcal{B}(P))=2<3=\gamma(\mathcal{B}(P))$.

\begin{figure}[ht]
\centering     %%% not \center
\subfigure{
\begin{tikzpicture}[scale=.7]
\node [nodel] at (0,0) (v1) [label=left:$1$]{};
\node [nodel] at (0,2) (v2) [label=left:$2$]{}
               edge [line width=0.5mm] (v1);
\node [nodel] at (0,4) (v3) [label=left:$3$]{}
               edge [line width=0.5mm] (v2);
\node [nodel] at (2,0) (v4) [label=right:$4$]{};
\node [nodel] at (2,2) (v5) [label=right:$5$]{}
           edge [line width=0.5mm] (v4);
\node [nodel] at (2,4) (v6) [label=right:$6$]{}
           edge [line width=0.5mm] (v5)
           edge [line width=0.5mm] (v2);
\node at (1,-.2) (v) [label=below:$P$] {};
\end{tikzpicture}
}
\hspace*{2cm}
\subfigure{
\begin{tikzpicture}[scale=0.7]
\node [nod3] at (0,0) (u1) [label=below:$2'$]{};
\node [nod3] at (2,0) (u2) [label=below:$1'$]{};
\node [nod3] at (4,0) (u3) [label=below:$4'$]{};
\node [nod3] at (6,0) (u4) [label=below:$5'$]{};
\node [nod3] at (0,2) (u5) [label=above:$2''$]{}
           edge [] (u1)
           edge [] (u2);
\node [nod3] at (2,2) (u6) [label=above:$3''$]{}
          edge [] (u1)
          edge [] (u2);
\node [nod3] at (4,2) (u7) [label=above:$6''$] {}
           edge [] (u1)
           edge [] (u2)
           edge [] (u3)
           edge [] (u4);
\node [nod3] at (6,2) (u8) [label=above:$5''$] {}
           edge [] (u3)
           edge [] (u4);           

\node at (3,-1) (u) [label=below:$\mathcal{B}(P)$] {};
\end{tikzpicture}   
}
\caption{A poset $P$ and its bipartite transformation.}
\label{fig:fig1}
\end{figure}
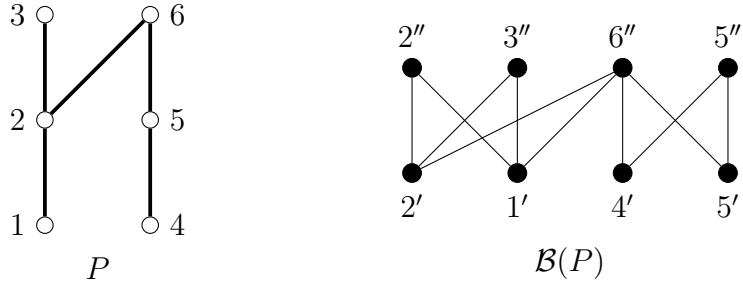
\end{example}
The proof of the following is almost identical to that of~\citep[Theorem $1$]{EHSS}, which we include for the sake of completeness.
\begin{theorem}\label{thm:wcp-cbp}
$\comp(P)$ is a weakly chordal graph if and only if $\mathcal{B}(P)$ is a chordal bipartite graph.
\end{theorem}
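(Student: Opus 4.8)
The plan is to establish the two implications separately, in both cases using the standard obstruction characterizations: $\comp(P)$ is weakly chordal iff it has no induced $C_k$ or $\overline{C_k}$ for $k\geq 5$, and (since comparability graphs contain no long induced odd cycles or complements of cycles of length $\geq 5$) this reduces to forbidding induced $C_{2k}$ for $k\geq 3$. On the other side, $\mathcal{B}(P)$, being bipartite, is chordal bipartite iff it is weakly chordal iff it contains no induced cycle $C_{2k}$ with $2k\geq 6$. So both conditions are about induced even cycles of length $\geq 6$, and the whole theorem amounts to a length-preserving correspondence between induced even cycles in the two graphs.

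First I would set up notation for the correspondence between vertices: a vertex $x\in\Min(P)\cup\Mid(P)$ gives $x'\in V_1$, a vertex $y\in\Max(P)\cup\Mid(P)$ gives $y''\in V_2$, and a middle element $x\in\Mid(P)$ appears twice, as $x'$ and $x''$, joined by the edge $x'x''$ (since $x<x$ is excluded but the definition forces $x'x''\in E(\mathcal{B}(P))$ when $x\in\Mid(P)$). The key observation is that $x'y''\in E(\mathcal{B}(P))$ iff $x<y$ in $P$ or $x=y\in\Mid(P)$, i.e. iff $xy\in E(\comp(P))$ or $x=y\in\Mid(P)$; this is exactly the incidence pattern of an ``extended'' bipartite double cover restricted to the relevant parts of $P$.

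For the forward direction, I would take an induced cycle $C$ of length $2k\geq 6$ in $\comp(P)$ and build an induced cycle of the same length (or argue toward a contradiction) in $\mathcal{B}(P)$; more usefully, for the contrapositive I would start from an induced $C_{2k}$, $2k\geq 6$, in $\mathcal{B}(P)$, say on vertices alternating between $V_1$ and $V_2$, write each as $a_i'$ or $b_i''$, and track which underlying elements of $P$ they are. Consecutive vertices $a_i'$ and $b_j''$ adjacent in $\mathcal{B}(P)$ mean either $a_i<b_j$ in $P$ (so $a_ib_j\in E(\comp(P))$) or $a_i=b_j\in\Mid(P)$. The first case, where the two cycle-vertices correspond to distinct elements of $P$, contributes an edge of $\comp(P)$; the second case, where they coincide, is a ``collapse'' that shortens the cycle. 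The heart of the argument is to show that after contracting all such collapses one is left with an induced cycle in $\comp(P)$ that is still even and of length $\geq 6$ — or, in the boundary cases where the contracted object is too short (a $C_4$ or a triangle), to extract instead a forbidden configuration directly, using that $\comp(P)$ is transitively orientable and that $\Mid(P)$, $\Min(P)$, $\Max(P)$ interact in a constrained way. One must check that no chords appear after contraction: a chord in the contracted cycle of $\comp(P)$ would pull back to a chord or a forbidden shortcut in the original $C_{2k}$ of $\mathcal{B}(P)$, contradicting inducedness. For the reverse direction I would run essentially the same correspondence backwards: given an induced $C_{2k}$, $2k\geq 6$, in $\comp(P)$, lift each vertex to $V_1$ or $V_2$ according to whether it is being used as a ``lower'' or ``upper'' endpoint of the comparabilities along the cycle, inserting the edge $x'x''$ whenever a middle element must play both roles, and verify the resulting closed walk in $\mathcal{B}(P)$ is an induced cycle of length $\geq 6$.

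The main obstacle I anticipate is the bookkeeping in the ``collapse'' analysis: a single middle element can occur as a cycle vertex in $\mathcal{B}(P)$ in its $V_1$-copy, its $V_2$-copy, or both, and one has to rule out (or handle) the degenerate situations where contracting these identifications produces a cycle that is odd, has length $4$, or has a chord, and show that each such degeneracy forces a smaller forbidden induced subgraph (and hence a contradiction) on the other side. Pinning down the parity accounting — why collapses always remove an even number of vertices, or why an odd collapse would already violate inducedness in $\mathcal{B}(P)$ — is the delicate point; once that is settled, inducedness is preserved essentially automatically because adjacency in one graph is governed by a single relation ($<$ or equality on $\Mid(P)$) that transfers verbatim. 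Since the paper explicitly says the argument is "almost identical to that of \citep[Theorem $1$]{EHSS}", I would model the collapse analysis on that proof and only flag the modifications forced by the slightly different definition of $\mathcal{B}(P)$ (namely the duplication of $\Mid(P)$ and the edges $x'x''$).
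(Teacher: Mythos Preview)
Your overall strategy---reduce both sides to ``no induced $C_{2k}$ with $k\ge 3$'' and set up a correspondence between such cycles in $\comp(P)$ and in $\mathcal{B}(P)$---matches the paper's. But you have missed the one observation that makes the argument short, and in its absence your proposal remains a plan with an acknowledged gap rather than a proof.

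The paper's first step in the forward direction is to show that an induced cycle $C$ of length $\ge 6$ in $\mathcal{B}(P)$ can contain at most one of $x',x''$ for any $x$. The reason is immediate: $x'x''$ is an edge of $\mathcal{B}(P)$, so if both lie on $C$ they are consecutive; then the other cycle-neighbour $y''$ of $x'$ and the other cycle-neighbour $w'$ of $x''$ satisfy $w<x<y$ in $P$, whence $w'y''\in E(\mathcal{B}(P))$ is a chord of $C$. This single observation kills every ``collapse'' you are worrying about: along $C$, consecutive vertices $a'b''$ always satisfy $a<b$ (never $a=b$), so the underlying elements form a cycle in $\comp(P)$ of the \emph{same} length, and one checks directly (via neighbourhood inclusions forced by transitivity) that it is induced. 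No contraction, no parity accounting, no degenerate cases.

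In the reverse direction you likewise overcomplicate. In an induced $C_{2k}$ of $\comp(P)$, transitivity forbids two consecutive edges pointing the same way (that would create a chord), so the orientation alternates $x_1<x_2>x_3<\cdots<x_{2k}>x_1$. Each $x_i$ is therefore purely a ``lower'' or purely an ``upper'' vertex; you never need to insert an $x'x''$ edge. The lift $\{x_1',x_2'',\ldots,x_{2k-1}',x_{2k}''\}$ is then an induced $C_{2k}$ in $\mathcal{B}(P)$, since non-consecutive $x_i,x_j$ are incomparable in $P$.

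So the gap is not that your route is wrong, but that you have set yourself the hardest version of the problem (handling collapses) when a two-line lemma shows collapses do not occur.
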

\begin{proof}
Suppose that $\comp(P)$ is a weakly chordal graph where $P=(X,\leq)$. Assume for a contradiction that $C$ is a chordless cycle of lenght at least six in $\mathcal{B}(P)$. We first note that $|C\cap\{x', x''\}|\leq 1$ for any element $x$, since otherwise the existence of edges $x'y''$ and $w'x''$ in $C$ would imply the chord $w'y''$ in $\mathcal{B}(P)$ such that $\mathcal{B}(P)[\{w',x',x'',y''\}]\cong C_4 $, that is, every cycle containing $x',x''$ has a chord. Secondly, $xy\notin E(\comp(P))$ for any two vertices $x',y' \in C$, since a comparability $x<y$ in $P$ would imply that $N_{\mathcal{B}(P)}(x')\subseteq N_{\mathcal{B}(P)}(y'')$, which is again followed by a chord in $C$. Similarly, if $x'',y'' \in C$, then $xy\notin E(\comp(P))$. Finally, if any two vertices $x'$ and $y''$ in $C$ are nonadjacent, then $x$ and $y$ are incomparable in $P$. Indeed, if $y<x$ in $P$, then the neighbors of $x'$ in $C$ together with the neighbors of $y''$ in $C$ would induce a $C_4$ in $\mathcal{B}(P)$. Therefore any chordless cycle of lenght at least six in $\mathcal{B}(P)$ corresponds to an induced cycle of the same length in $\comp(P)$.

Conversely, assume that $\comp(P)$ has a chordless cycle $F=\{x_1,x_2,\ldots, x_{2k}\}$ for some $k\geq 3$. Assume without loss of generality that $x_1<x_2$ in $P$. Note that $F$ does not contain any vertex $x$ such that $zx,xy\in E(F)$ and $z<x<y$ in $P$, since otherwise the transitivity introduces the $zy$ chord. It follows that the comparabilities between the elements of $F$ is in the fashion $x_1<x_2>x_3<x_4>\ldots<x_{2k}>x_1$ in $P$, which yields $\mathcal{B}(P)[\{x_1',x_2'',\ldots, x_{2k-1}', x_{2k}''\}]\cong C_{2k}$, a contradiction.
\end{proof}

For a given $a\in \Mid(P)$, we define $U_a:=\{x''\in V_2\colon x\in U_P[a]\}$ and $D_a:=\{x'\in V_1\colon x\in D_P[a]\}$. Similarly, if $a\in \Max(P)$ or $a\in \Min(P)$, we set $B_a:=\st(a'')$ and $B_a:=\st(a')$, respectively, where $\st(v)$ denotes the subgraph isomorphic to the star $K_{1,n}$ induced by the vertex $v$ and its neighbors in $\mathcal{B}(P)$.

\begin{observation}
If $a\in \Mid(P)$, then $B_a:=D_a\cup U_a$ is a biclique in $\mathcal{B}(P)$.
\end{observation}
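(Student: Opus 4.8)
The plan is to verify directly from the definitions that $B_a = D_a \cup U_a$ carries the structure of a complete bipartite graph inside $\mathcal{B}(P)$, using the bipartition $V_1 \cup V_2$ of $V(\mathcal{B}(P))$. First I would observe that $D_a \subseteq V_1$ and $U_a \subseteq V_2$, so these two sets are disjoint and each is contained in one side of the bipartition of $\mathcal{B}(P)$; hence no edges of $\mathcal{B}(P)$ can occur within $D_a$ or within $U_a$, and to exhibit a biclique it suffices to show that $x'y'' \in E(\mathcal{B}(P))$ for every $x' \in D_a$ and every $y'' \in U_a$.

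The key step is then a case analysis on whether $x$ and $y$ equal $a$ or not. Take $x' \in D_a$, so $x \in D_P[a]$, meaning $x = a$ or $x < a$ in $P$; likewise $y \in U_P[a]$, so $y = a$ or $a < y$ in $P$. If $x = y = a$, then since $a \in \Mid(P)$ the edge $a'a''$ belongs to $\mathcal{B}(P)$ by the clause ``$x = y \in \Mid(P)$'' in the definition of the bipartite transformation. If $x = a$ and $a < y$, then $x < y$ in $P$, so $x'y'' \in E(\mathcal{B}(P))$; symmetrically if $x < a$ and $y = a$. Finally if $x < a$ and $a < y$, transitivity of $\leq_P$ gives $x < y$, hence again $x'y'' \in E(\mathcal{B}(P))$. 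In every case the required edge is present, so $\mathcal{B}(P)[D_a \cup U_a]$ contains $K_{D_a, U_a}$ as a (not necessarily induced) subgraph, which is exactly what it means for $B_a$ to be a biclique.

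There is essentially no obstacle here: the only subtlety is the bookkeeping of the closed versus open up/down-sets, since $a$ itself lies in both $D_a$ (via $a' \in V_1$) and $U_a$ (via $a'' \in V_2$), and one must be careful that $a'$ and $a''$ are genuinely distinct vertices of $\mathcal{B}(P)$ — which they are, as $a \in \Mid(P) \subseteq \Min(P) \cup \Mid(P)$ and $a \in \Mid(P) \subseteq \Max(P) \cup \Mid(P)$ put a copy of $a$ on each side. One should also note that $D_a$ and $U_a$ are both non-empty (indeed $a' \in D_a$ and $a'' \in U_a$), so $B_a$ is a legitimate biclique rather than a degenerate empty graph. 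Thus the statement is immediate from transitivity together with the defining adjacency rule of $\mathcal{B}(P)$, and the proof is a short unwinding of definitions.
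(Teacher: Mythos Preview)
Your proof is correct and is precisely the intended argument: the paper records this as an Observation without proof because it is an immediate unwinding of the definition of $\mathcal{B}(P)$ together with transitivity of $\leq_P$, and your case analysis carries this out cleanly.
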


\begin{definition}
A biclique $B$ in $\mathcal{B}(P)$ is said to be a \emph{star biclique} if $B=B_x$ for some $x\in \Min(P)\cup \Max(P)$. Similarly, a biclique $B$ is called a \emph{double-star biclique} if $B=B_x$ for some $x\in \Mid(P)$.
\end{definition}

For a given subset $U\subseteq V(\mathcal{B}(P))$, we define its \emph{core} by
$$\core(U):=\{u\in X\colon \textnormal{either}\;u'\in U\;\textnormal{or}\;u''\in U\}.$$

\begin{theorem}\label{thm:bp-ms-gamma}
$\bp(\mathcal{B}(P))=\gamma_{\os}(P)$ for any poset $P\in \PE_{3}(k)$ with $k\geq 3$.
\end{theorem}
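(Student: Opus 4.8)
The plan is to prove the two inequalities $\bp(\mathcal{B}(P))\leq \gamma_{\os}(P)$ and $\gamma_{\os}(P)\leq \bp(\mathcal{B}(P))$ separately, exploiting the correspondence between star/double-star bicliques of $\mathcal{B}(P)$ and elements of $X(P)$. For the first inequality, I would start with a minimum os-dominating set $D\subseteq X(P)$. The natural candidate for a biclique vertex-cover of $\mathcal{B}(P)$ is the family $\mathscr{B}(D):=\{B_a\colon a\in D\}$, where $B_a$ is the star biclique if $a\in \Min(P)\cup\Max(P)$ and the double-star biclique $D_a\cup U_a$ if $a\in\Mid(P)$ (legitimate by the Observation that $B_a$ is a biclique). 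One must check that every vertex of $\mathcal{B}(P)$ lies in some $B_a$. A vertex $x'$ (with $x\in\Min(P)\cup\Mid(P)$) is covered when $x\in D$; if $x\notin D$ then, because $D$ dominates $\comp(P)$, there is $a\in D$ comparable to $x$, and if $x\in\Mid(P)$ the os-condition even gives $a\in D$ with $x<a$, so $x'\in D_a\subseteq B_a$; the only subtle case is $x\in\Min(P)\setminus D$, where some $a\in D$ with $x<a$ suffices to put $x'\in B_a$. The dual argument handles the $x''$ vertices. Then invoke the Remark (from \citep{FMPS}) that a biclique vertex-cover of size $k$ yields a biclique vertex-partition of size $k$, giving $\bp(\mathcal{B}(P))\leq |D|=\gamma_{\os}(P)$.

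For the reverse inequality, I would begin with a minimum biclique vertex-partition (or cover) $\mathscr{B}=\{B_1,\dots,B_t\}$ of $\mathcal{B}(P)$ and aim to replace each $B_i$ by a single element of $X(P)$, producing an os-dominating set of size $\leq t$. The key structural lemma to prove is that \emph{every} biclique of $\mathcal{B}(P)$ is contained in a star biclique $B_x$ ($x\in\Min(P)\cup\Max(P)$) or a double-star biclique $B_a$ ($a\in\Mid(P)$): given a biclique $B$ with parts $B\cap V_1$ and $B\cap V_2$, complete joinness forces, via transitivity in $P$, that all elements on the $V_1$-side share a common upper bound and all on the $V_2$-side a common lower bound; one then picks an element of $X(P)$ dominating the whole biclique — this is where the hypothesis $P\in\PE_3(k)$ (every element in a chain of length $\geq 3$) and the definition of $\mathcal{B}(P)$ (each $a\in\Mid(P)$ contributing both $a'$ and $a''$) are used to guarantee such an element exists and that the associated $B_x$ really contains $B$. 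Having replaced $B_i$ by such an element $d_i$, set $D:=\{d_1,\dots,d_t\}$; since the $B_i$ cover $V(\mathcal{B}(P))$ and each $B_{d_i}\supseteq B_i$, the family $\{B_{d_i}\}$ still covers, and translating "$x'$ and $x''$ are covered" back into $P$ yields exactly: every $x\in\Mid(P)$ is either in $D$ or lies strictly between two elements of $D$, and every vertex is dominated in $\comp(P)$. Hence $D$ is os-dominating and $\gamma_{\os}(P)\leq t=\bp(\mathcal{B}(P))$.

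The main obstacle I anticipate is the structural lemma that every biclique of $\mathcal{B}(P)$ embeds into a star or double-star biclique, and the careful bookkeeping of which element of $X(P)$ to assign to a biclique that is "diagonal" — i.e., uses some vertex $a'$ and some vertex $b''$ with $a\neq b$ but $a,b\in\Mid(P)$. In that situation the biclique may not obviously sit inside a single $B_x$, and one has to argue, using the edge relation $x'y''\in E(\mathcal{B}(P))\iff x<y$ or $x=y\in\Mid(P)$ together with transitivity, that a common witness (a maximal element above everything on the $V_1$-side, or a minimal element below everything on the $V_2$-side, or a mid-element bounding both sides) must exist; the chain-length-$\geq 3$ hypothesis is exactly what is needed to produce it. A secondary point requiring care is the boundary behaviour of vertices coming from $\Min(P)$ and $\Max(P)$ (which contribute only one vertex each to $\mathcal{B}(P)$), since the os-domination condition only constrains $\Mid(P)$; one must verify that ordinary domination in $\comp(P)$ of those boundary elements is automatically recorded by the covering condition on $\mathcal{B}(P)$. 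I expect the complete-Helly or weak-chordality hypotheses to be unnecessary here — only $P\in\PE_3(k)$ is needed — which is why the theorem is stated at that level of generality.
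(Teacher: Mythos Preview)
Your first direction, $\bp(\mathcal{B}(P))\le \gamma_{\os}(P)$, is essentially the paper's argument and is fine.

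The reverse direction, however, rests on a structural lemma that is simply false: it is \emph{not} true that every biclique of $\mathcal{B}(P)$ is contained in some star or double-star biclique $B_x$, even when $P\in\PE_3(3)$. Consider the height-three poset $P$ with $\Min(P)=\{a_1,a_2\}$, $\Mid(P)=\{m_1,m_2,m_3,m_4\}$, $\Max(P)=\{c_1,c_2\}$ and cover relations $a_1\prec m_1,m_3$, $a_2\prec m_2,m_4$, $m_1,m_2\prec c_1$, $m_3,m_4\prec c_2$. Then $a_i<c_j$ for all $i,j$, so $B:=\{a_1',a_2'\}\cup\{c_1'',c_2''\}$ is a biclique in $\mathcal{B}(P)$. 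But $B$ sits inside no $B_x$: a star biclique has only one vertex on one side, while a double-star $B_z$ would require $z\in\Mid(P)$ with $a_1,a_2\le z\le c_1,c_2$, and no such $z$ exists. The $\PE_3(k)$ hypothesis does not help here; transitivity gives you common upper bounds of $\{a_1,a_2\}$ only in $\Max(P)$, not in $\Mid(P)$.

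The paper circumvents this by an extremal argument rather than a pointwise replacement. One fixes a minimum biclique vertex-cover $\BE$ with the \emph{maximum} number of star and double-star bicliques, and shows that a non-star, non-double-star biclique $B\in\BE$ leads to a contradiction. The cases where $B$ touches a pair $\{u',u''\}$ or admits a mid-element above $\core(U(B)_1)$ (or below $\core(U(B)_2)$) are handled by swapping $B$ for a star or double-star without increasing $|\BE|$. The remaining case---precisely the configuration of the counterexample above---is dealt with not by enlarging $B$ to some $B_x$ (which is impossible) but by showing that $B$ is \emph{redundant}: every vertex of $B$ is already covered by the other members of $\BE$ after a harmless modification, contradicting minimality of $|\BE|$. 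This redundancy step is the missing idea in your outline, and it genuinely uses the global structure of the cover, not just the single biclique $B$.
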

\begin{proof}  
Let $D$ be an order-sensitive dominating set for $P$. Consider the family 
$$\SE(D):=\{B_a\colon a\in D\}$$ of bicliques in $\mathcal{B}(P)$. We claim that $\SE(D)$ is a biclique vertex-covering of $\mathcal{B}(P)$. It is sufficient to show that for every $x\in P$, the vertex $x'\in \mathcal{B}(P) $ belongs to a biclique in $\SE(D)$. Note that for the vertex $x'' \in \mathcal{B}(P)$, the proof will be similar. We may first assume that $x\in \Min(P)$ and $B_x\notin \SE(D)$. Choose an element $w\in U_P(x)\cap \Mid(P)$. Since $D$ is an os-dominating set, we have either $w\in D$ or there exist an element $z\in D\cap U_P(w)$. If $w\in D$, then $x'\in B_w\in \SE(D)$ or else $x'\in B_z\in \SE(D)$ for some $z\in D\cap U_P(w)$. Therefore, we may further assume that $x\in \Mid(P)$. If $x\in D$, then we clearly have  $x'\in B_x\in \SE(D)$. If $x\notin D$, then there exist elements $y\in D\cap D_P(x)$ and $z \in D\cap U_P(x)$, since $D$ is an os-dominating set. In such a case $x'$ belongs to both of the bicliques $B_y, B_z \in \SE(D)$.

Now, we are only left to verify that $\gamma_{\os}(P)\leq \bp(\mathcal{B}(P))$. In order to prove that we will show that there exists a biclique vertex-cover of $\mathcal{B}(P)$ of size $\bp(\mathcal{B}(P))=k$ consisting only star and double-star bicliques in $\mathcal{B}(P)$.
Assume otherwise that this is not possible. Choose a biclique vertex-cover $\BE$ of $\mathcal{B}(P)$ among all biclique vertex-covers of size $k$ such that $\BE$ contains the maximum number of star and double-star bicliques.
We write $\BE=\BE_1\cup \BE_2$ such that $\BE_1$ consists of all star and double-star bicliques in $\BE$ and $\BE_2\neq \emptyset$. If $B\in \BE$, 
we set $B=U(B)_1\cup U(B)_2$, where $U(B)_t\subseteq V_t$ for each $t=1,2$. Now, given a biclique $B\in \BE_2$.

{\bf Claim:} There exists no $a\in \Mid(P)$ such that $a',a''\in B$. 

{\it Proof of the Claim:} If $a',a''\in B$, then
$\core(U(B)_1)\subseteq D_P[a]$ and $\core(U(B)_2)\subseteq U_P[a]$ in $P$. In other words, we would have $B\subseteq B_a$ so that we may replace $B$ with $B_a$ in $\BE$ increasing the number of star and double-star bicliques, which is not possible.

We may therefore assume that if $B\in \BE_2$, then $|B\cap \{a',a''\}|\leq 1$ for every $a\in \Mid(P)$.

{\bf Case $1$.} There exists a biclique $B\in \BE_2$ and $u\in \Mid(P)$ such that $|B\cap \{u',u''\}|=1$.

Assume without loss of generality that $u'\in B$ and $u''\notin B$. Since $\BE$ is a biclique vertex-cover, there exists $H\in \BE$ such that $u''\in H$.

{\bf Subcase $1.1$.} $H\in \BE_1$. Suppose first that $H$ is a star biclique. In other words, there exists $x\in \Min(P)$ such that $H=B_x=\st(x)$. It then follows that $\core(U(B)_2)\subseteq U_P(x)$ holds in $P$. Indeed, if $w\in \core(U(B)_2)$, then $u<w$, and since $x<u$, we have $x<w$ by the transitivity. Now, if $z\in \core(U(B)_2)\cap \Max(P)$, we define $B':=B_z=\st(z)$ so that $B'$ is a star biclique in $\mathcal{B}(P)$ satisfying $B\cup H\subseteq B'\cup H$. Thus, we may replace $B$ in $\BE$ with the star biclique $B'$ preserving the vertex-covering property, a contradiction.

Secondly, let $H$ be a double-star biclique, that is, $H=B_y$ for some $y\in \Mid(P)$. Once again, we have that $\core(U(B)_2)\subseteq U_P(y)$. 
Similar to above case, we may replace $B$ with a star biclique $B_z$ for some $z\in \core(U(B)_2)\cap \Max(P)$, which is not possible.

{\bf Subcase $1.2$.} $H\in \BE_2$. Since $u'\in B$ and $u''\in H$, we have that $\core(U(B)_2)\subseteq CU_P(\core(U(H)_1))$. Therefore, if we define
$B':=B_z$ for some $z\in \core(U(B)_2)\cap \Max(P)$ and $H':=U(H)_1\cup (U(H)_2\cup U(B)_2)$, it follows that both $B'$ and $H'$ are bicliques in $\mathcal{B}(P)$ satisfying $B\cup H\subseteq B'\cup H'$. Moreover, the family
$(\BE\setminus \{B,H\})\cup \{B',H'\}$ is a biclique vertex-cover of $\mathcal{B}(P)$ whose number of stars and double stars is strictly greater then that of $\BE$, a contradiction.

{\bf Case $2$.} If $B\in \BE_2$, then $B\cap \{v',v''\}=\emptyset$ for every $v\in \Mid(P)$. It then follows that $U(B)_1\subseteq \Min(P)$ and $U(B)_2\subseteq \Max(P)$. 

{\bf Subcase $2.1$.} There exists $q\in \Mid(P)$ such that $q\in CU_P(\core(U(B)_1))$. If we define $B':=U(B)_1\cup (U(B)_2\cup \{q''\})$, then $B'$ is a biclique in $\mathcal{B}(P)$. Now, if we set $\BE':=(\BE\setminus \{B\})\cup \{B'\}$, then $\BE'$ is a biclique vertex-cover of $\mathcal{B}(P)$ having the same number of star and double-star bicliques as with that of $\BE$. However, this is not possible by Case $1$.

{\bf Subcase $2.2$.} There exists $q\in \Mid(P)$ such that $q\in CD_P(\core(U(B)_2))$. This subcase can be treated as in Subcase $2.1$.

{\bf Subcase $2.3$.} $CU_P(\core(U(B)_1))\cap \Mid(P)=\emptyset$ and $CD_P(\core(U(B)_2))\cap \Mid(P)=\emptyset$. In such a case, we claim that $\BE$ can not be a minimal biclique vertex-cover.

We define $M(B)$ to be the subset of $\Mid(P)$ such that if $q\in M(B)$, then there exists $x_q\in U(B)_1$ so that $x_q<q$ or (there exists) $y_q\in U(B)_2$ so that $q<y_q$ hold in $P$. Observe that $|M(B)|\geq 2$. We partition $M(B)$ into two disjoint subsets as $M(B)=M_1(B)\cup M_2(B)$ in a way that if $q\in M_1(B)$, then $B_q\in \BE$, while $p\in M_2(B)$, then there exist two bicliques
$B'(p)$ and $B''(p)$ in $\BE$ containing the vertices $p'$ and $p''$ respectively. Note that if $q\in M_1(B)$, then $x_q,y_q\in B_q$. 
On the other hand, if $p\in M_2(B)$, then we define
$B_1(p):=(U(B'(p))_1\cup \{x_p\})\cup U(B'(p))_2$ and $B_2(p):=U(B''(p))_1\cup (U(B''(p))_2\cup \{y_p\})$. Observe that both $B_1(p)$ and $B_2(p)$ are bicliques in $\mathcal{B}(P)$. Furthermore, the inclusion

\begin{equation*}
B\subseteq (\bigcup_{q\in M_1(B)} B_q)\cup (\bigcup_{p\in M_2(B)} B_1(p)\cup B_2(p))
\end{equation*}
holds. It then follows that if we replace $B'(p)$ and $B''(p)$ with 
$B_1(p)$ and $B_2(p)$ in $\BE$ respectively, the resulting family is a biclique vertex-cover of $\mathcal{B}(P)$ in which the biclique $B$ is redundant.
This proves the claim.

As a consequence, the bipartite graph $\mathcal{B}(P)$ must have a biclique vertex-cover $\BE$ of size $\bp(\mathcal{B}(P))=k$ such that each biclique in $\BE$ is either a star or a double-star biclique. Now, define $D(\BE):=\{x\in X\colon B_x\in \BE\}$. We claim that $D(\BE)$ is an os-dominating set for $\comp(P)$. The fact that $D(\BE)$ is a dominating set simply follows from property that $\BE$ is a biclique vertex-cover. So, let $p\in \Mid(P)$ be given such that $p\notin D(\BE)$, that is, $B_p\notin \BE$. However, since $\BE$ is a biclique vertex-cover, there exist
$c,d\in X$ so that $p'\in B_c$ and $p''\in B_d$. It means that $p\in D_P(c)\cap U_P(d)$. 

Finally, we conclude that $\gamma_{\os}(P)\leq |D(\BE)|=k=\bp(\mathcal{B}(P)))$. This completes the proof. 
\end{proof}

Now, we have the following trivial results as the consequences of Theorems~\ref{thm:os-dom-roman}, \ref{thm:4-gammaos-gamma} and~\ref{thm:bp-ms-gamma}.

\begin{corollary} 
$\bp(\mathcal{B}(\mathscr{P}_3(G)))=\gamma_{\rom}(G)$ for every connected graph $G$ with order at least two.
\end{corollary}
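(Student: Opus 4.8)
The plan is to chain together two results already established in the paper, after a brief sanity check that the relevant hypotheses apply. The only thing that needs verification is membership of $\mathscr{P}_3(G)$ in the class $\PE_3(k)$ for some $k\geq 3$, after which both Theorem~\ref{thm:bp-ms-gamma} and Theorem~\ref{thm:os-dom-roman} become directly applicable.

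First I would observe that $\mathscr{P}_3(G)$ is, by construction, a graded poset of height $3$. Since $G$ is connected of order at least two, $G$ has no isolated vertices; but in fact we do not even need this for membership, because the covering relations $x_{(1,i)}\prec x_{(2,i)}\prec x_{(3,i)}$ always hold in $\mathscr{P}_3(G)$ (the case $i=j$ of the defining relations). Hence every element $x_{(l,i)}$ lies in the chain $\{x_{(1,i)},x_{(2,i)},x_{(3,i)}\}$ of size $3$, so $\mathscr{P}_3(G)\in \PE_3(3)=\PE(3)$, and in particular $\Mid(\mathscr{P}_3(G))=\{x_{(2,i)}\colon i\in[n]\}\neq\emptyset$.

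With this in hand, the rest is immediate: applying Theorem~\ref{thm:bp-ms-gamma} to the poset $P=\mathscr{P}_3(G)$ (with $k=3$) gives $\bp(\mathcal{B}(\mathscr{P}_3(G)))=\gamma_{\os}(\mathscr{P}_3(G))$, and applying Theorem~\ref{thm:os-dom-roman} (which requires exactly that $G$ be connected of order at least two) gives $\gamma_{\os}(\mathscr{P}_3(G))=\gamma_{\rom}(G)$. Concatenating the two equalities yields $\bp(\mathcal{B}(\mathscr{P}_3(G)))=\gamma_{\rom}(G)$, as claimed.

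There is essentially no obstacle here: the statement is a formal consequence of the two cited theorems, and the one point requiring (trivial) care — that $\mathscr{P}_3(G)$ satisfies the structural hypothesis $\PE_3(k)$, $k\geq3$, of Theorem~\ref{thm:bp-ms-gamma} — is handled by exhibiting the explicit length-three chains above. The resulting proof will be only a couple of lines long.
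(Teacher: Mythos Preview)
Your proposal is correct and matches the paper's approach exactly: the paper presents this corollary as a trivial consequence of Theorems~\ref{thm:os-dom-roman} and~\ref{thm:bp-ms-gamma}, and your chaining of those two equalities (with the easy check that $\mathscr{P}_3(G)\in\PE(3)$) is precisely what is intended.
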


\begin{corollary} 
$\bp(\mathcal{B}(\mathscr{P}_4(G)))=2\gamma(G)$ for every graph $G$.
\end{corollary}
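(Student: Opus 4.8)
The plan is to chain together the three equalities that have already been established in the excerpt. Specifically, by Theorem~\ref{thm:bp-ms-gamma}, applied to the poset $\mathscr{P}_4(G)$, we have $\bp(\mathcal{B}(\mathscr{P}_4(G)))=\gamma_{\os}(\mathscr{P}_4(G))$, provided that $\mathscr{P}_4(G)\in\PE_3(k)$ for some $k\geq 3$. Then, by Theorem~\ref{thm:4-gammaos-gamma}, we have $\gamma_{\os}(\mathscr{P}_4(G))=2\gamma(G)$. Combining the two gives $\bp(\mathcal{B}(\mathscr{P}_4(G)))=2\gamma(G)$, which is exactly the claim.

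The only thing requiring a moment's care is verifying the hypothesis of Theorem~\ref{thm:bp-ms-gamma}, namely that $\mathscr{P}_4(G)$ lies in $\PE_3(4)$, i.e.\ that it is a graded poset of height $4$ in which every element is contained in a chain of size at least $3$. This is immediate from the construction of $\mathscr{P}_k(G)$: for $k=4$ the ground set is partitioned into four layers indexed by $l\in[4]$, and each element $x_{(l,i)}$ has $x_{(l-1,i)}\prec x_{(l,i)}$ or $x_{(l,i)}\prec x_{(l+1,i)}$ available since $ii$ is trivially "$i=j$", so every element sits in the maximal chain $x_{(1,i)}\prec x_{(2,i)}\prec x_{(3,i)}\prec x_{(4,i)}$ of size $4\geq 3$. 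Hence the height is exactly $4$ and every element lies in a chain of size at least $3$, so $\mathscr{P}_4(G)\in\PE_3(4)$ and Theorem~\ref{thm:bp-ms-gamma} applies.

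There is essentially no obstacle here; this corollary is a "trivial consequence" in the sense indicated by the surrounding text. The proof I would write is a single sentence invoking the two theorems, together with the one-line observation above that $\mathscr{P}_4(G)\in\PE_3(4)$ so that the hypothesis of Theorem~\ref{thm:bp-ms-gamma} is satisfied. If one wanted to be fully explicit, one could display the chain
\begin{equation*}
\bp(\mathcal{B}(\mathscr{P}_4(G)))=\gamma_{\os}(\mathscr{P}_4(G))=2\gamma(G),
\end{equation*}
citing Theorem~\ref{thm:bp-ms-gamma} for the first equality and Theorem~\ref{thm:4-gammaos-gamma} for the second.
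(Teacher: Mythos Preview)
Your proposal is correct and matches the paper's approach exactly: the paper states this corollary as a trivial consequence of Theorems~\ref{thm:4-gammaos-gamma} and~\ref{thm:bp-ms-gamma} without giving any further proof. Your additional verification that $\mathscr{P}_4(G)\in\PE_3(4)$ is a nice bit of care that the paper leaves implicit.
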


%%%%%%%%%%%%%%%%%%%%%%%%%%%%%%%%%%%%%%%%%%%%%%%%%%%%%%%%%%

%%%%%%%%%%%%%%%%%%%%%%%%%%%%%%%%%%%%%%%%%%%%%%%%%%%%%%%%%%
\section{Complexity of Order-sensitive domination}
We have already enough evidences to conclude that for a given poset $P\in \PE_3(k)$ with $k\in \{3,4\}$ and a positive integer $d$, the problem of deciding whether there exists an os-dominating set in $P$ of size at most $d$ is NP-complete. Before extending this result to posets of arbitrary height, we state the problem more formally.

\begin{align*}
&\textnormal{OS-DOMINATING SET:}\\
&\textnormal{{\it Instance}: A  poset}\;P\in \PE_{3}(k)\;\textnormal{ and a positive integer}\;d.\\
&\textnormal{{\it Question}:  Does}\;P\;\textnormal{have an order-sensitive dominating set of size at most}\;d?
\end{align*}

\begin{corollary}\label{cor:even-complexity}
\textnormal{OS-DOMINATING SET} problem in $\PE_3(k)$ for $3\leq k\leq 4$  is \textnormal{NP}-complete.
\end{corollary}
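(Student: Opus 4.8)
The plan is to verify membership in NP once and for all, and then to exhibit, for each of $k=3$ and $k=4$, a polynomial-time many-one reduction from a classical NP-complete graph problem, using the exact identities established in Section~\ref{sect:os-dom-graphs}. For membership in NP, a certificate for a yes-instance $(P,d)$ is a set $D\subseteq X(P)$ with $|D|\le d$; verifying that $D$ is a dominating set of $\comp(P)$ and that every $x\in\Mid(P)\setminus D$ satisfies $x\in U_P(D)\cap D_P(D)$ requires only, for each element, a scan of its comparabilities in $P$, which is polynomial in $|X(P)|$. Hence \textnormal{OS-DOMINATING SET} lies in NP for every $k$.

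For the case $k=3$, I would reduce from \textnormal{ROMAN DOMINATION}, which is well known to be NP-complete (on connected graphs of order at least two). Given an instance $(G,d)$, construct the graded height-three poset $\mathscr{P}_3(G)$ on the $3|V(G)|$ elements $x_{(l,i)}$, with covering relations read directly off $E(G)$; this is computable in polynomial time. Moreover $\mathscr{P}_3(G)\in\PE_3(3)$, since it has height three and every element lies in the chain $x_{(1,i)}\prec x_{(2,i)}\prec x_{(3,i)}$. By Theorem~\ref{thm:os-dom-roman}, $\gamma_{\os}(\mathscr{P}_3(G))=\gamma_{\rom}(G)$, so $(G,d)$ is a yes-instance of \textnormal{ROMAN DOMINATION} if and only if $(\mathscr{P}_3(G),d)$ is a yes-instance of \textnormal{OS-DOMINATING SET}. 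Together with membership in NP, this proves NP-completeness for $k=3$.

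For the case $k=4$, I would reduce from \textnormal{DOMINATING SET}, which is NP-complete. Given $(G,d)$, construct $\mathscr{P}_4(G)$ (equivalently $(B_e(G))_4$), again of size linear in $|V(G)|$ and constructible in polynomial time, and again lying in $\PE_3(4)$ for the same structural reason. By Theorem~\ref{thm:4-gammaos-gamma}, $\gamma_{\os}(\mathscr{P}_4(G))=2\gamma(G)$, so $G$ has a dominating set of size at most $d$ if and only if $\mathscr{P}_4(G)$ has an os-dominating set of size at most $2d$; the reduction outputs $(\mathscr{P}_4(G),2d)$. This settles $k=4$.

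There is no essential obstacle in this corollary: its content is already packaged in Theorems~\ref{thm:os-dom-roman} and~\ref{thm:4-gammaos-gamma}, and the only points needing (routine) attention are that the poset transformations are polynomial and belong to the correct class $\PE_3(k)$, together with the classical NP-hardness of \textnormal{ROMAN DOMINATION} and \textnormal{DOMINATING SET}. The genuinely hard case — posets of unbounded height — is precisely what the later reduction from \textnormal{EQUAL-$3$-SAT} is designed to address, and is not needed here.
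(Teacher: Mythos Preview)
Your argument is correct, but the paper takes a slightly different route for the case $k=3$. Rather than invoking Theorem~\ref{thm:os-dom-roman} and appealing to the NP-completeness of \textnormal{ROMAN DOMINATION}, the paper handles both values of $k$ from a single source problem: it reduces \textnormal{DOMINATING SET} to the $k=4$ case via Theorem~\ref{thm:4-gammaos-gamma}, and then passes to $k=3$ by composing with the height-three reduction of Proposition~\ref{prop:poset-red3}, i.e.\ $G\mapsto(\red(\mathscr{P}_4(G)),2d)$. The paper's approach is more economical in its hypotheses, needing only the classical hardness of \textnormal{DOMINATING SET} and highlighting the usefulness of the $\red$ construction; your two-pronged argument is more direct for $k=3$ and illustrates that Theorem~\ref{thm:os-dom-roman} already yields hardness in height three on its own, at the cost of citing the (well-known but less classical) NP-completeness of \textnormal{ROMAN DOMINATION}.
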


\begin{proof}
It is well-known that DOMINATING SET problem in graphs is NP-complete. Hence, the result follows from Theorem \ref{thm:4-gammaos-gamma} together with Proposition~\ref{prop:poset-red3}. 
\end{proof}

For completeness, we next consider OS-DOMINATING SET problem for posets of arbitrary height.  We will show that the problem remains NP-complete by using a reduction from Equal 3-Satisfiability (EQUAL-$3$-SAT) problem. EQUAL-$3$-SAT is a special case of $3$-SAT problem where the formula restricted to the property that the number of clauses equal to the number of variables. In \cite{juho}, it has been proved that EQUAL-3-SAT is NP-complete.

\begin{theorem}\label{thm.odd-complexity}
\textnormal{OS-DOMINATING SET} problem is \textnormal{NP}-complete for posets in $\PE_3(k)$ for every $k\geq 4$.
\end{theorem}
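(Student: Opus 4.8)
The plan is to establish membership in $\NP$ and then give a polynomial-time reduction from EQUAL-$3$-SAT. Membership is immediate: an order-sensitive dominating set of size at most $d$ is a polynomially checkable certificate, since verifying that a set dominates $\comp(P)$ and that each $x\in\Mid(P)$ either lies in the set or satisfies $x\in U_P(D)\cap D_P(D)$ takes polynomial time. So the content is hardness, which I would obtain by first encoding a formula into a height-three poset and then lifting that poset to the prescribed height.

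Fix $k\geq 4$. Given an EQUAL-$3$-SAT instance $\phi$ with variables $u_1,\dots,u_n$ and clauses $c_1,\dots,c_n$, I would build a height-three poset $Q_\phi\in\PE(3)$ together with an integer $d=d(\phi)$ such that $\gamma_{\os}(Q_\phi)\le d$ if and only if $\phi$ is satisfiable. The three levels of $Q_\phi$ are a minimal level, a middle antichain, and a maximal level. Each variable $u_i$ gets a gadget built from two middle elements $t_i,f_i$ (thought of as its two truth values), each sitting above a private minimal element and below a private maximal element, wired so that any order-sensitive dominating set must spend at least one element per variable to handle these middle elements, and an optimal way to do so selects exactly one of $t_i,f_i$; selecting $t_i$ encodes ``$u_i$ true'' and $f_i$ encodes ``$u_i$ false''. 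Each clause $c_j$ gets a gadget containing a single middle element $w_j$ placed above a shared minimal element and below a maximal element that belongs to the dominating set precisely when some literal of $c_j$ has been selected by its variable gadget; then $w_j$ can be sandwiched at no additional cost when $c_j$ is satisfied, and otherwise forces one extra element. Realizing the literal-to-clause incidences as comparabilities of $Q_\phi$ and tallying the cost, one obtains $\gamma_{\os}(Q_\phi)=d$ precisely when $\phi$ is satisfiable, and $\gamma_{\os}(Q_\phi)>d$ otherwise; the equal count of variables and clauses enforced by EQUAL-$3$-SAT is what keeps the base cost $d$ a clean closed form.

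To reach height exactly $k$ while staying in $\PE_3(k)$, I would apply Proposition~\ref{prop:poset-red3} in reverse: since every element of $Q_\phi$ already lies in a chain of size three, adding comparabilities that turn a chosen set of $k-2$ middle elements of $Q_\phi$ into a single chain yields a poset $P_\phi$ of height exactly $k$ with $\red(P_\phi)=Q_\phi$ and $\Mid(P_\phi)=\Mid(Q_\phi)$, whence $\gamma_{\os}(P_\phi)=\gamma_{\os}(Q_\phi)$. If $|\Mid(Q_\phi)|<k-2$ one first pads $Q_\phi$ by a constant number of independent ``dummy'' middle elements, each with its own minimal and maximal element, which changes $d$ only by a known constant. (Alternatively one can avoid Proposition~\ref{prop:poset-red3} and instead attach short chains above the maximal gadget elements and below the minimal ones directly inside $P_\phi$ until the longest chain has size $k$, at the cost of a more hands-on verification that these pendant chains are dominated for free.)

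The main obstacle is the clause gadget together with the tightness of the per-variable cost: one must ensure that $w_j$ is sandwichable at no extra cost exactly when a satisfying literal is selected, that the clause gadgets create no cheaper-than-intended route for covering the variable gadgets, and that a minimum order-sensitive dominating set can always be normalized to a ``gadget-respecting'' one from which a consistent truth assignment is readable off — this normalization is what makes both directions of the equivalence $\gamma_{\os}(P_\phi)\le d\iff\phi$ satisfiable go through. The remaining points — polynomial size of $P_\phi$, and membership in $\PE_3(k)$ after the lifting step (every element in a $3$-chain, maximum chain size exactly $k$) — are routine but should be checked explicitly.
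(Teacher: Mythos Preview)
Your lifting step is where the argument breaks. When you insert comparabilities $m_1<m_2<\cdots<m_{k-2}$ among chosen middle elements of $Q_\phi$ and take the transitive closure, every minimal element below $m_i$ is forced below all $m_j$ with $j>i$, and dually every maximal element above $m_j$ is forced above all $m_i$ with $i<j$. These are new Mid--Min and Mid--Max relations, so $\red(P_\phi)\neq Q_\phi$; Proposition~\ref{prop:poset-red3} then only yields $\gamma_{\os}(P_\phi)=\gamma_{\os}(\red(P_\phi))$ for a poset you have not analysed. The padding variant exhibits the same phenomenon concretely: if each dummy $d_i$ has private extremes $b_i,a_i$ and you chain the $d_i$, then in $P_\phi$ one has $b_1<d_i<a_{k-2}$ for every $i$, so the whole dummy block is os-dominated by $\{b_1,a_{k-2}\}$ at cost $2$ rather than $k-2$. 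That particular shift is harmless, but when you chain genuine gadget middles the transitive fallout interacts with your variable and clause wiring, and nothing in your sketch controls the resulting $\gamma_{\os}$. The alternative of hanging pendant chains off the extremal levels is no better: the old extremals become middle elements, which must now themselves be sandwiched, and this interferes with the per-variable cost accounting you rely on.

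The paper sidesteps all of this by building the height-$k$ poset directly rather than lifting from height three. Its encoding is also structurally different from yours: $\Max(P)$ and $\Min(P)$ are indexed by the $2n$ literals, and $\Mid(P)$ consists of $n$ disjoint $(k-2)$-chains for \emph{each} clause (so $n^2$ chains in total), with the top and bottom of every chain attached to the three literal-elements of its clause in $\Max(P)$ and $\Min(P)$ respectively. The target is $d=2n$. A satisfying assignment selects one literal per variable, and the corresponding $n$ maximal and $n$ minimal elements sandwich every chain. Conversely, the $n$-fold replication of chains per clause is precisely what makes it too expensive for a size-$2n$ os-dominating set to use middle elements, so one can normalise $S$ into $\Max(P)\cup\Min(P)$ and read off literals hitting every clause. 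There are no separate variable gadgets and no height-lifting step at all.
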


\begin{proof}
Given a poset $P\in \PE_3(k)$ and a certificate $S$, we can easily verify that $S$ is an os-dominating set or not. Thus, the decision problem is in NP. 

We use a reduction from EQUAL-$3$-SAT to the problem. Given an instance \V for EQUAL-$3$-SAT problem with $n$ variables and $n$ clauses, we will construct a poset $P\in \PE_3(k)$ with $n^2(k-2)+4n$ elements.

Let $\CE=\{C_1,C_2,\ldots,C_n\}$ be $n$ clauses of \V and $C_i=(\ell_{i1},\ell_{i2},\ell_{i3})$ for $i\in [n]$. We construct the (graded and self-dual) poset $P$ from \V as follows: For every boolean variable $x$ of \V, we associate four elements $a,b$ and $a',b'$, where $x'$ stands for the negation of the  variable $x$. We set $\Max(P):=\{a_1,a_1',\ldots, a_n,a_n'\}$ and $\Min(P):=\{b_1,b_1',\ldots, b_n,b_n'\}$. In addition, for every clause $C_i=(\ell_{i1},\ell_{i2},\ell_{i3})$ for $i\in [n]$,  we associate $n$ disjoint $(k-2)$-chains, $T_{i1},\ldots,T_{in}$. Then, we regroup the resulting $n^2$ disjoint $(k-2)$-chains as $H_p=\{T_{1p},\ldots,T_{np}\}$ so as to set $\Mid(P):=H(\CE)=H_1\cup H_2\cup\ldots\cup H_n$. In particular, we label the $l^{\textnormal{th}}$-layer of $H_p$ by
$c^l_{1p},\ldots,c^l_{np}$ for each $p\in [n]$.
For the sake of simplicity, we denote by $L_1,\ldots,L_{k-2}$, the layers of $H(\CE)$. 

Now, if a literal $\ell_{ij}$ occurs in a clause $C_i$ for $i\in [n]$, $j\in [3]$, we let $c^{k-2}_{ip}$ be covered by the element in $\Max(P)$ corresponding to $\ell_{ij}$ for each $p\in [n]$. Symmetrically, we let $c^1_{ip}$ covers the element in $\Min(P)$ corresponding to $\ell_{ij}$ for each $p\in [n]$. This completes the construction of $P$ (see Figure \ref{fig:equal} for an illustration).

Denote by $U_{C_i}$ and $D_{C_i}$ the elements in $\Max(P)$ and $\Min(P)$ respectively,  corresponding to $\ell_{i1},\ell_{i2},\ell_{i3}$.

We claim that \V  is satisfiable if and only if $P$ has an os-dominating set of size at most $2n$.

\begin{figure}[ht]
\centering  
\includegraphics[width=.8\textwidth]{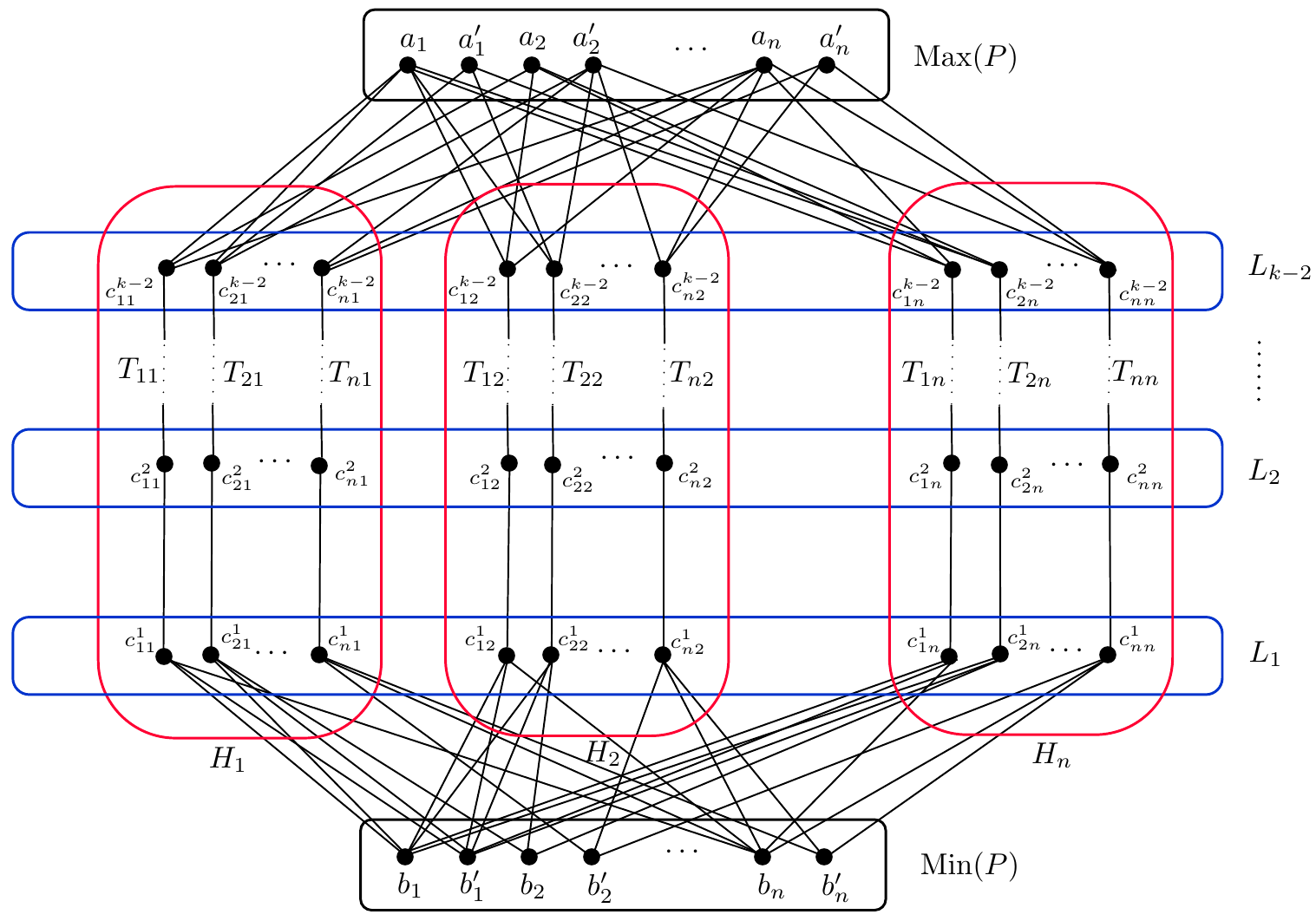}
\caption{An illustration of a poset constructed in Theorem~\ref{thm.odd-complexity}.}
\label{fig:equal}
\end{figure} 

Assume \C is satisfiable by a truth assignment. Then, each of $n$ clauses of \V has at least one literal assigned TRUE. Let $S$ be a set of elements corresponding to exactly one literal in each clause of \V assigned TRUE. Obviously, we have $|S|=n$. If we denote by $S_{\max}$ and $S_{\min}$, the subsets of $\Max(P)$ and $\Min(P)$ respectively, corresponding to those elements of $S$, then $S_{\max}\cup S_{\min}$ provides an os-dominating set for $P$ with size $2n$. 

Conversely, we now assume that $P$ has an os-dominating set $S$ with $|S| \leq 2n$. Without loss of generality, we may assume that $S$ is of minimum order. We claim that we can always find such a set satisfying
$S\cap H(\CE)=\emptyset$. Suppose that this is not the case, and let $S$ be such a set containing fewest elements from $H(\CE)$.
So, there exists $p\in [n]$ such that $c^l_{ip}\in H_p\cap S$ for some $1\leq l\leq k-2$. If $l=1$, we consider the element $c^{k-2}_{ip}$ in the $(k-2)$-chain $T_{ip}$. Then, either $c^{k-2}_{ip}\in S$ or else $U_P(c^{k-2}_{ip})\cap S\neq \emptyset$. If $c^{k-2}_{ip}\in S$, choose
$a_i\in \Max(P)$ and $b_i\in \Min(P)$ such that $a_i$ covers $c^{k-2}_{ip}$ and $b_i$ is covered by $c^{1}_{ip}$ in $P$. Then, 
the set
$(S\setminus \{c^{1}_{ip}, c^{k-2}_{ip}\})\cup \{a_i,b_i\}$ is an os-dominating set containing fewer elements from $H(\CE)$, a contradiction.
On the other hand, if $c^{k-2}_{ip}\notin S$, that is, $U_P(c^{k-2}_{ip})\cap S\neq \emptyset$, then the set
$(S\setminus \{c^1_{ip}\})\cup \{b_i\}$ is still an os-dominating set for $P$, a contradiction. A similar argument applies if $l>1$ by considering the element $c^{1}_{ip}$ in the $(k-2)$-chain $T_{ip}$. This proves the claim.

Now, let $S$ be an os-dominating set with $|S| \leq 2n$ and $S\cap H(\CE)=\emptyset$. It then follows that every clause $C_i$ has a literal whose corresponding element is in $S$.  Thus, the set $S$  corresponds to literals of \V assigned TRUE. In other words, at least one literal corresponding to an element of $U_{C_i}$ (resp. of $D_{C_i}$) is assigned TRUE for each $i\in [n]$.  Hence, \V is  satisfiable.
\end{proof}

Now, combining Corollary~\ref{cor:even-complexity} and Theorem~\ref{thm.odd-complexity}, we have a complete resolution:

\begin{corollary}
\textnormal{OS-DOMINATING SET} problem is \textnormal{NP}-complete for posets in $\PE_3(k)$ for every $k\geq 3$.
\end{corollary}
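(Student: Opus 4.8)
The plan is to observe that this corollary is purely an assembling statement: the range $k\geq 3$ is exactly the union of the two ranges already settled, namely $3\leq k\leq 4$ in Corollary~\ref{cor:even-complexity} and $k\geq 4$ in Theorem~\ref{thm.odd-complexity}. So the proof has essentially no content beyond a case split. First I would record membership in NP, which holds uniformly for every $k\geq 3$: given a poset $P\in\PE_3(k)$, a purported os-dominating set $S$ is a certificate of size at most $|X(P)|$, and checking that $S$ dominates $\comp(P)$ and that every element of $\Mid(P)$ either lies in $S$ or lies in $U_P(D)\cap D_P(D)$ can be done in polynomial time (this is exactly the verification step already spelled out at the start of the proof of Theorem~\ref{thm.odd-complexity}).

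For NP-hardness I would split on $k$. If $k=3$, then NP-hardness is precisely the content of Corollary~\ref{cor:even-complexity} (the case $k=3$ there, obtained via the DOMINATING SET reduction through $\mathscr{P}_3$ and Theorem~\ref{thm:os-dom-roman}, or alternatively through Proposition~\ref{prop:poset-red3}). If $k\geq 4$, then NP-hardness is exactly Theorem~\ref{thm.odd-complexity}, proved by the reduction from EQUAL-$3$-SAT. Since every integer $k\geq 3$ is either equal to $3$ or at least $4$, one of these two cases always applies, and combined with membership in NP this gives NP-completeness for all $k\geq 3$. There is no genuine obstacle: the only thing to be mildly careful about is that the two cited results together genuinely cover the whole range without a gap (they do, with $k=4$ covered redundantly), so the corollary is really just the formal statement that the resolution is complete.
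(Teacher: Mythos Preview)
Your proposal is correct and matches the paper's approach exactly: the paper does not even give a formal proof of this corollary, merely stating that it follows by combining Corollary~\ref{cor:even-complexity} and Theorem~\ref{thm.odd-complexity}. Your case split on $k=3$ versus $k\geq 4$ together with the NP-membership observation is precisely this combination spelled out.
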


What we have not been able to resolve is the complexity of OS-DOMINATING SET problem in the subfamily of $\PE(3)$  consisting of weakly chordal posets.

\begin{problem}\label{prob:prob1}
Determine the complexity of OS-DOMINATING SET problem on weakly chordal posets in $\PE(3)$?
\end{problem} 

Note that Problem~\ref{prob:prob1} is of particular interest due to Theorems~\ref{thm:wcp-cbp} and~\ref{thm:bp-ms-gamma}. In other words, if the outcome of Problem~\ref{prob:prob1} turns out to be NP-complete, it then follows that the BICLIUE VERTEX-PARTITION problem for chordal bipartite graphs is NP-complete, that resolves an open problem of Duginov~\citep{ODug}. 

%%%%%%%%%%%%%%%%%%%%%%%%%%%%%%%%%%%%%%%%%%%%%%%%%%%%%%%%%%%%%
\section{Further comments}
In this section, we offer a short discussion on some possible new directions where to lead from here.

Regarding to the results of Section~\ref{sect:os-dom-graphs}, the most prominent question is to understand whether Theorems~\ref{thm:os-dom-roman} and ~\ref{thm:4-gammaos-gamma} may have any role of obtaining tight bounds on the Roman domination and domination numbers of graphs. In particular, recall that there are various conjectured upper bounds on the domination number of graphs~\citep{Henning2016}. 

Consider a graph $G=(V,E)$ such that $\mathscr{P}_4(G)$ is a complete  Helly poset. By Lemma~\ref{lem:red-ekr}, it follows that $P(G):=\red(\mathscr{P}_4(G))$ is a complete  Helly poset as well. Furthermore, if we write $V_i:=\{v_i\colon v\in V\}$ for $i=1,2$, the middle graph of $P(G)$ is the graph on $V_1\cup V_2$ with the following properties:\medskip
\begin{itemize}
\item[•] $\midd(P(G))[V_i]\cong G^2$ for $i=1,2$,\\
\item[•] $u_1v_2\in E(\midd(P(G)))$ if and only if $\dist_G(u,v)\leq 3$ for every pair (not necessarily distinct) of vertices $u,v\in V$. 
\end{itemize}
Therefore, a possible optimal clique partition of $\midd(P(G))$ gives rise to an upper bound on the domination number of the underlying graph. In this guise, a further study on the coloring of complements of middle graphs of  Helly posets is required.\medskip

Recall that when $P_1=(X_1,\leq_1)$ and $P_2=(X_2,\leq_2)$ are two posets, their \emph{Cartesian product} $P_1\times P_2$ is defined to be the poset on $X_1\times X_2$ such that $(x_1,x_2)\leq (y_1,y_2)$ if and only if $x_1\leq_1 y_1$ and $x_2\leq_2 y_2$.

\begin{problem}\label{prob:product}
Is it true that the inequality $\gamma_{\os}(P_1\times P_2)\geq \gamma_{\os}(P_1)\gamma_{\os}(P_2)$ holds for posets $P_1,P_2\in \PE_2(k)$ with $k\geq 2$ such that at least one of the posets $P_1$ and $P_2$ has neither a maximum nor a minimum element?
\end{problem}

Problem~\ref{prob:product} is still interesting, even in the case of bipartite graphs considered as posets of height two.

\begin{problem}\label{prob:product-bip}
If $B_i$ is a bipartite graph without any isolated vertex for $i=1,2$,
is it true that the inequality $\gamma_{\os}(B_1\times B_2)\geq \gamma_{\os}(B_1)\gamma_{\os}(B_2)$ holds?
\end{problem}
We remark that $B_1\times B_2$ is a graded poset of height three, and $\gamma_{\os}(B)=\gamma(B)$ for any bipartite graph $B$ by our earlier convention. Thus, Problem~\ref{prob:product-bip} asks for the validity of the inequality  $\gamma_{\os}(B_1\times B_2)\geq \gamma(B_1)\gamma(B_2)$.

When one of the bipartite graphs is a single edge, that is, $B_2=K_2$, we have an affirmative answer to Problem~\ref{prob:product-bip}.

\begin{lemma}
$\gamma_{\os}(B\times K_2)\geq \gamma_t(B)\geq \gamma(B)$ for every connected bipartite graph $B$ with at least three vertices.
\end{lemma}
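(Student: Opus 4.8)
The plan is to relate the order-sensitive domination number of the height-three poset $B\times K_2$ to the total domination number of $B$, and then invoke the elementary inequality $\gamma_t(B)\geq \gamma(B)$, which holds for every graph without isolated vertices. Write $K_2$ on the two-element chain $\{0<1\}$. If $B=(X,Y;E)$ is bipartite with no isolated vertex, viewed as a height-two poset with $\Min(B)=X$ and $\Max(B)=Y$, then $B\times K_2$ is a graded poset of height three whose bottom layer is $X\times\{0\}$, whose middle layer is $(Y\times\{0\})\cup(X\times\{1\})$, and whose top layer is $Y\times\{1\}$; here $(u,i)<(v,j)$ in $B\times K_2$ exactly when $uv\in E$ (so $i=0$, $j=1$ in the relevant cases). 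In particular every element of $B$ appears in the middle layer $\Mid(B\times K_2)$, once as $(y,0)$ for $y\in Y$ and once as $(x,1)$ for $x\in X$.

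First I would take a minimum os-dominating set $D$ in $B\times K_2$ and, exactly as in the proof of Proposition~\ref{prop:b4-gamma-total}, push $D$ off the top and bottom layers: any element of $D$ lying in $X\times\{0\}$ can be replaced by an element of the middle layer above it (which exists since $B$ has no isolated vertex), and similarly for $Y\times\{1\}$, without increasing $|D|$ and without destroying the os-domination property, because every bottom element has a neighbor in the middle layer and every top element does too. So I may assume $D\subseteq \Mid(B\times K_2)$. Next I would show that the projection of such a $D$ onto $V(B)=X\cup Y$ (identifying $(y,0)\leftrightarrow y$ and $(x,1)\leftrightarrow x$) is a total dominating set of $B$. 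The point is that os-domination forces every middle element $z$ not in $D$ to have both a lower and an upper neighbor in $D$; translating through the construction of $B\times K_2$, this says precisely that the projected vertex has a neighbor in $B$ among the projected vertices, i.e. every vertex of $B$ is dominated by a \emph{neighbor} lying in the projected set — which is the definition of a total dominating set. (One must also check the middle elements that \emph{are} in $D$ still get a neighbor in the projected set; this again uses that $B$ has no isolated vertex together with the domination requirement inherited from $D$ being a dominating set of $\comp(B\times K_2)$.) Hence $\gamma_t(B)\leq |D|=\gamma_{\os}(B\times K_2)$.

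The main obstacle, and the step warranting the most care, is the bookkeeping in the translation between the os-domination conditions on the single middle layer of $B\times K_2$ and the total-domination condition on $B$: the middle layer carries two disjoint copies of the bipartition classes, and an os-dominating set may use vertices of $X$ and of $Y$ that do not "match up" across the two copies, so one has to verify that the combined projected set still totally dominates both sides of $B$ simultaneously. I expect this to go through cleanly because the covering relations in $B\times K_2$ between the middle and top (resp. bottom) layers are governed entirely by the edge set $E$ of $B$, so an upper witness for $(y,0)$ is a vertex $x$ with $xy\in E$ and a lower witness for $(x,1)$ is a vertex $y$ with $xy\in E$ — in both cases a genuine $B$-neighbor. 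Once $\gamma_t(B)\leq \gamma_{\os}(B\times K_2)$ is established, the chain $\gamma_{\os}(B\times K_2)\geq \gamma_t(B)\geq \gamma(B)$ follows, with the last inequality being standard (any total dominating set is in particular a dominating set), completing the proof.
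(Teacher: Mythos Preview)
Your proposal has a genuine gap: the ``push $D$ into the middle layer'' step does not preserve order-sensitive domination, and in fact it is the opposite of what Proposition~\ref{prop:b4-gamma-total} does (that proof pushes \emph{off} the middle layers into the extreme layers).

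The underlying problem is a mis-description of the product poset. In the Cartesian product $B\times K_2$ one has $(u,i)\leq (v,j)$ if and only if $u\leq_B v$ \emph{and} $i\leq j$; it is not enough that $uv\in E$. With the correct relations, the middle layer $\Mid(B\times K_2)=(Y\times\{0\})\cup (X\times\{1\})$ is an \emph{antichain}: for $y\in Y$ and $x\in X$ we have neither $(y,0)\leq (x,1)$ (since $y\not\leq_B x$) nor $(x,1)\leq (y,0)$ (since $1\not\leq 0$). Consequently, if $D\subseteq\Mid(B\times K_2)$, then for any middle element $z\notin D$ there are no $a,b\in D$ with $a<z<b$, so the os-condition forces $D=\Mid(B\times K_2)$. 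In particular, replacing a minimal element $(x,0)\in D$ by a middle element above it can destroy os-domination: for instance, the only element below $(x,1)$ is $(x,0)$, so removing $(x,0)$ from $D$ leaves $(x,1)$ without a lower witness. Your statement that ``an upper witness for $(y,0)$ is a vertex $x$ with $xy\in E$'' is likewise wrong: the unique element above $(y,0)$ is $(y,1)$.

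The paper's proof proceeds in the reverse direction. Starting from a minimum os-dominating set $D$, it argues (by a fewest-mid-elements choice) that one may take $D\cap\Mid(B\times K_2)=\emptyset$, i.e.\ $D\subseteq (X\times\{1\})\cup(Y\times\{2\})$ in the paper's $\{1,2\}$ notation. The projection $T(D)=\{x\in X:(x,1)\in D\}\cup\{y\in Y:(y,2)\in D\}$ is then shown to be a total dominating set of $B$ of size $|D|$, using precisely that each $(y,1)\in\Mid$ has $(y,2)$ as its unique upper neighbour and each $(x,2)\in\Mid$ has $(x,1)$ as its unique lower neighbour. This gives $\gamma_t(B)\leq\gamma_{\os}(B\times K_2)$; the inequality $\gamma_t(B)\geq\gamma(B)$ is standard.
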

\begin{proof}
We write $V(B)=X\cup Y$ and $V(K_2)=\{1,2\}$, and initially verify that there exists an os-dominating set $D$ in $B\times K_2$ of minimum order such that $D\cap \Mid(B\times K_2)=\emptyset$. Suppose otherwise this is not possible, and choose an os-dominating set $D$ of minimum order containing fewest mid-elements of $B\times K_2$. Assume that $(x,2)\in D\cap \Mid(B\times K_2)$ for some $x\in X$. Let $y\in N_B(x)$ be a neighbor of $x$. If $(y,1)\in D\cap \Mid(B\times K_2)$, then the set 
$$D':=(D\setminus \{(x,2),(y,1)\})\cup \{(x,1),(y,2)\}$$ 
is still an os-dominating set for $B\times K_2$, containing fewer mid-elements than $D$, a contradiction. Thus, we must have $(y,1)\notin D$ for every vertex $y\in N_B(x)$. Now, since $D$ is an os-dominating set,
there exist $(u,1),(v,2)\in D$ such that $(u,1)<(y,1)<(v,2)$ in $B\times K_2$. The only possibility for the vertex $v\in Y$ is that $y=v$. In other words, we must have $(y,2)\in D$. However, it then follows that the set $D'':=(D\setminus \{(x,2)\})\cup \{(x,1)\}$ is an os-dominating set containing fewer mid-elements than $D$, a contradiction. This proves the claim.

Now, let $D$ be an os-dominating set for the poset $B\times K_2$ with $|D|=\gamma_{\os}(B\times K_2)$ such that $D\cap \Mid(B\times K_2)=\emptyset$. If we define 
$$T(D):=\{x\in X\colon (x,1)\in D\}\cup \{y\in Y\colon (y,2)\in D\},$$
then $T(D)$ is a total dominating set for $B$ of order $|D|$.
\end{proof}
As a final remark, we note that $\midd(B\times K_2)\cong B$ for every bipartite graph without isolated vertices.

%%%%%%%%%%%%%%%%%%%%%%%%%%%%%%%%%%%%%%%%%%%%%%%%%%%%%%%%%%%%%%%%%%%%%
\bibliographystyle{abbrv}  
\bibliography{os-dom}

\begin{thebibliography}{10}

\bibitem{NA}
N.~Alon.
\newblock Eigenvalues and expanders.
\newblock {\em Combinatorica}, 6(2):83--96, 1986.

\bibitem{B-bipclasses}
A.~Brandstädt.
\newblock Classes of bipartite graphs related to chordal graphs.
\newblock {\em Disc. Appl. Math.}, 32(1):51 -- 60, 1991.

\bibitem{CDHH}
E.~J. Cockayne, P.~A. Dreyer, S.~M. Hedetniemi, and S.~T. Hedetniemi.
\newblock Roman domination in graphs.
\newblock {\em Discrete Mathematics}, 278(1):11 -- 22, 2004.

\bibitem{ODug}
O.~Duginov.
\newblock Partitioning the vertex set of a bipartite graph into complete
  bipartite subgraphs.
\newblock {\em Discrete Mathematics and Theoretical Computer Science},
  16(3):203--2140, 2014.

\bibitem{EHSS}
E.~Eschen, R.~B. Hayward, J.~Spinrad, and R.~Sritharan.
\newblock Weakly triangulated comparability graphs.
\newblock {\em SIAM Journal on Computing}, 29(2):378--386, 1999.

\bibitem{FMPS}
H.~Fleischner, E.~Mujuni, D.~Paulusma, and S.~Szeider.
\newblock Covering graphs with few complete bipartite subgraphs.
\newblock {\em Theoretical Computer Science}, 410:2045--2053, 2009.

\bibitem{FF-book}
L.~R. Ford and D.~R. Fulkerson.
\newblock {\em Flows in Networks}.
\newblock Princeton University Press, USA, 2010.

\bibitem{Hedetniemi-Walsh}
S.~T. Hedetniemi, R.~R. Rubalcaba, P.~J. Slater, and M.~Walsh.
\newblock Few compare to the great {R}oman empire.
\newblock {\em Congressus Numerantium}, 217:129–136, 2013.

\bibitem{Henning2016}
M.~A. Henning.
\newblock My favorite domination conjectures in graph theory are bounded.
\newblock In R.~Gera, S.~Hedetniemi, and C.~Larson, editors, {\em Graph Theory:
  Favorite Conjectures and Open Problems-1}, pages 253--271. Springer
  International Publishing, Cham, 2016.

\bibitem{juho}
J.~(https://cs.stackexchange.com/users/472/juho).
\newblock 3-{S}{A}{T} problem with number of clauses equal to number of
  variables.
\newblock URL: https://cs.stackexchange.com/q/19584 (version: 2019-02-04).

\bibitem{Karp1972}
R.~M. Karp.
\newblock Reducibility among combinatorial problems.
\newblock In R.~E. Miller, J.~W. Thatcher, and J.~D. Bohlinger, editors, {\em
  Complexity of Computer Computations: Proceedings of a symposium on the
  Complexity of Computer Computations. The IBM Research Mathematical Sciences
  Department}, pages 85--103. Springer US, Boston, MA, 1972.

\bibitem{Trotter}
W.~T. Trotter.
\newblock {\em Combinatorics and Partially Ordered Sets: Dimension Theory}.
\newblock Johns Hopkins University Press, Baltimore, 1992.

\bibitem{ZLJ}
Q.~Zou, J.~Li, and Z.~Ji.
\newblock On vertex-disjoint triangles in tripartite graphs and multigraphs.
\newblock {\em Graphs and Combinatorics},
  https://doi.org/10.1007/s00373-020-02188-3, 2020.

\end{thebibliography}

%%%%%%%%%%%%%%%%%%%%%%%%%%%%%%%%%%%%%%%%%%%%%%%%%%%%%%%

%%%%%%%%%%%%%%%%%%%%%%%%%%%%%%%%%%%%%%%%%%%%%%%%%%%%%%%
\end{document}